\newtheorem{theorem}{Theorem}
\newtheorem{proposition}{Proposition}
\newtheorem{corollary}{Corollary}
\newtheorem{lemma}{Lemma}
\newcommand{\T}{{\rm pc}}
\newcommand{\M}{{\rm meas}}
\newcommand{\bs}{\boldsymbol}
\title[Entropy and diffraction of the $k$-free points]
{Entropy and diffraction of the $k$-free points in $n$-dimensional lattices}
\author[Peter A.~B.~Pleasants]{Peter A.~B.~Pleasants~\dag}
\address{Department of Mathematics, University of Queensland,
Brisbane, QLD 4072, Australia}
\author{Christian Huck}
\address{Fakult\"at f\"ur Mathematik, Universit\"at Bielefeld,
  Postfach 100131, Bielefeld, Germany}
\email{huck@math.uni-bielefeld.de}
\begin{document}

\setlength{\unitlength}{1mm}

\begin{abstract}
We consider the $k$th-power-free points in $n$-dimensional lattices
and explicitly calculate their entropies and diffraction
spectra. This is of particular interest since these sets have holes
of unbounded inradius.
\end{abstract}

\maketitle

\section{Introduction}
In \cite{BMP} the diffraction properties of the visible points and the
$k$th-power-free numbers were studied and it was shown that these sets
have positive, pure-point, translation-bounded diffraction spectra with countable,
dense support. The interest of this lay in the fact that these sets
fail to be Delone sets: they are uniformly discrete (subsets of lattices, in fact)
but not relatively dense. The lack of relative denseness means that these
sets have arbitrarily large ``holes" and hence are not repetitive in the
sense of \cite{LP}. It is of interest to ask for more precise information
about the irregularity of these sets, and Lenz \cite{L} has asked what
their entropy is.

There are two kinds of entropy commonly associated with arrays of symbols
(of which subsets of lattices are a particular case): \emph{patch-counting
entropy} which is defined simply by counting patches and depends only on
the adjacency relation between sites, not on any metric of the ambient
space; and \emph{measure entropy} which is defined in terms of the
frequency of occurrence of patches in space. The patch-counting entropy
is an upper bound for the measure entropy, whatever measure is used.
We show that the sets considered here have measure entropy zero
(relative to a canonically constructed measure) but
positive patch-counting entropy, contrasting with regular model sets~\cite{Moo1},
for which both entropies are zero~\cite{BLR}. In \cite{BMP}, a model set construction for the visible points and the
$k$th-power-free numbers was described, with the internal spaces
adelic, instead of Euclidean as in more usual cut-and-project sets. In this
construction, the boundaries of the windows have positive measure, however,
so they are not regular model sets.

In Section~\ref{defns}, we define patch-counting and measure
entropies, while in Section~\ref{kfree} we define the set of
$k$-free points, whose entropies we investigate, and
show that they possess patch frequencies which can be explicitly
calculated in terms of infinite products.  This is just a mild
generalization to the case of lattices other than $\mathbb Z$
of the results of Mirsky \cite{Mir2} on $k$th-power-free
integers. To keep the route to our main results as clear
 as possible we have been content with weak error terms
 in Section~\ref{kfree}, but for the record we show in
Section~\ref{errorterm} how error terms like those in \cite{Mir1}
carry over to the general case. Section~\ref{examples} gives some
examples of patch frequencies and Section~\ref{entropy} completes
the calculation of the entropies, with the aid of a key lemma
(for the measure entropy case) that gives a small upper bound
for the frequencies of the great majority of
patches. In Section~\ref{variational}, we give a short discussion of
the 
variational principle, which relates the two kinds of entropy. In
Section~\ref{diffraction}, we demonstrate how the results in~\cite{BMP}
on the diffraction spectra of the
$k$th-power-free integers and visible lattice points carry over to the general
case. 

For the special case of square-free numbers (resp., $k$th-power-free numbers), some of our 
results were found independently by employing alternative
methods from the theory of dynamical systems by Cellarosi and
Sinai~\cite{CS}, Cellarosi and Vinogradov~\cite{CV} and by
Sarnak~\cite{Sarnak}. Furthermore, these references also contain results on the ergodic properties of the underlying
invariant measures that go beyond what we cover here.

In the course of the paper, we need to call on a number of standard
results in number theory, which for convenience we have collected
in an appendix (whose equation numbers carry a prefix `A').
\medskip

Peter A.~B.~Pleasants gave me (CH) an early draft of this paper already in
2006. After his untimely death in 2008, 
Michael Baake asked me to finish the manuscript. At that time, it already
contained the entire calculation of the entropies (Sections 1--5). Moreover, Peter had
planned two further sections, one on improved error terms and one on a
model set 
construction including the sets in
question together with an upper bound for the topological entropies
that is intrinsic to the corresponding window. While the former is now
included (Section 8), the latter is still 
work in progress. Instead, the text
now has two additional sections, one on a variational principle
(Section~\ref{variational}) and one on the diffraction of the
sets studied here (Section~\ref{diffraction}).

\section{Definitions of entropy}\label{defns}
Let $X$ be a subset of a lattice $\Lambda$ in $\mathbb R^n$.
Given a radius $\rho>0$ and a point ${\bs t}\in\Lambda$,
the \emph{$\rho$-patch} of $X$ at $\bs t$ is
\[(X-{\bs t})\cap B_\rho({\bs 0}),\]
the translation to the origin of the part of $X$ within a distance
$\rho$ of $\bs t$. We denote by $\mathcal A(\rho)$ the set of all
$\rho$-patches of $X$ and by $N(\rho)=|\mathcal A(\rho)|$ the number of
distinct $\rho$-patches of $X$. Then the \emph{patch-counting entropy} of $X$ is
\begin{equation}\label{top}
h_\T(X):=\lim_{\rho\to\infty}\frac{\log_2N(\rho)}{\rho^nv_n},
\end{equation} where $v_n$ is the volume of an $n$-dimensional ball of
radius 1, i.e.\ $v_n=\pi ^{n/2}/\Gamma(1+\frac{n}{2})$ (so that the
denominator is the volume of the open ball $B_\rho({\bs 0})$).
It can be shown by a subadditivity argument that this limit exists
for every $X\subset\Lambda$.  In~\cite[Theorem 1 and Remark 2]{BLR} Baake, Lenz and Richard show that,
for the dynamical system of coloured Delone sets of finite local complexity, the patch-counting
entropy coincides with the topological entropy; see Section~\ref{variational} for more on the natural dynamical system associated
with a subset $X$ of $\Lambda$ and the $k$-free points in particular.

To describe measure entropy, we must take into account densities of subsets of a
lattice.  If $Y\subset\Lambda$, its \emph{density} $\delta(Y)$ is defined by
\begin{equation}\label{densY}
\delta(Y):=\lim_{R\to\infty}\frac{|Y\cap B_R(\bs0)|}{R^nv_n},
\end{equation}
when the limit exists; cf.~\cite{BMP} for related ways of defining
densities of discrete point sets.  In cases where the limit does not exist, we can still
define an \emph{upper density}, $\bar\delta(Y)$ and a \emph{lower density},
$\underline\delta(Y)$, by replacing the limit in \eqref{densY} by
$\limsup$ or $\liminf$.  The \emph{frequency}, $\nu(\mathcal P)$,
of a $\rho$-patch $\mathcal P$ of $X$ is defined by
\begin{equation}\label{frequency}
\nu(\mathcal P):=\delta(\{\bs t\in\Lambda:\mbox{the $\rho$-patch of $X$ at $\bs t$ is $\mathcal P$}\}),
\end{equation}
when this density exists. In the absence of a well defined density, 
we can still define an \emph{upper frequency}, $\bar\nu(\mathcal P)$
and a \emph{lower frequency}, $\underline\nu(\mathcal P)$, by replacing
$\delta$ by $\bar\delta$ or $\underline\delta$. The \emph{measure entropy}
of $X$, which can be thought of as corresponding to the metric entropy of
a dynamical system, is now defined by
\begin{equation}\label{met}
h_\M(X):=\lim_{\rho\to\infty}
\frac{1}{\rho^nv_n}\sum_{\mathcal P\in\mathcal A(\rho)}\!\!\!-\nu(\mathcal P)\log_2\nu(\mathcal P),
\end{equation}
with the convention that $\nu\log_2\nu=0$ when $\nu=0$; see
Section~\ref{variational} for details.  It is
defined when every patch of $X$ has a well defined frequency,
in which case a subadditivity argument again shows that the limit
exists. Since $\nu\log_2\nu$ is a convex function of $\nu$,
the sum does not decrease if we replace the $\nu(\mathcal P)$'s
by their average value, $1/N(\rho)$, to make the right side
the same as the right side of \eqref{top}. Hence
\[h_\M(X)\le h_\T(X).\]

As a simple example where these entropies differ, consider
the binary sequence consisting of the binary numbers in order
(0, 1, 10, 11, 100, \dots) with $n,n+1$ separated by $n$ 1's:
\[\underline{0}\hspace{.5mm}\underline{1}1\underline{10}11
\underline{11}111\underline{100}1111\underline{101}11111
\underline{110}111111\underline{111}1111111\underline{1000}1111\ldots.\]
Evidently, there are very few 0's to contribute variety here.
In fact the sequence of 0's has density zero, and consequently
any finite word that is not all 1's has frequency zero.
So $h_\M=0$.  But since there are $2^l$ possible words of length
$l$ and every word occurs somewhere, $h_\T=1$.

In general, $h_\T$ is a combinatorial function of the set of finite
configurations that occur, while $h_\M$ is a geometric function of
an infinite configuration and can differ among different infinite
configurations built up from the same set of finite ones, with $h_\T$
being an upper bound for the possible values it can take. Of the two
entropies, $h_\M$ would appear to carry more physical significance.

More generally, if we have a pattern formed by labelling the points of
$\Lambda$ with letters from an $a$-letter alphabet then we can again
define \mbox{$\rho$-patches}, and the entropies of the pattern are given
by \eqref{top} and \eqref{met} with 2 replaced by $a$ as the base of
logarithms. A subset of $\Lambda$ corresponds to a 2-letter labelling
indicating whether or not a site is occupied. The reason for the patch
volume in the denominator and for the choice of base of logarithms is
to normalize so that the integer lattice with random labelling has both
entropies 1.

There are various ways in which the definition of measure entropy might
be extended to sets $X$ for which not all patch frequencies exist.
A first step would be to replace the sum in \eqref{met} by
\[
\lim_{R\to\infty}\sum_{\mathcal P\in\mathcal A(\rho)}
-\frac{|L(\mathcal P)\cap B_R(\bs0)|}{R^nv_n}
\log_2\left(\frac{|L(\mathcal P)\cap B_R(\bs0)|}{R^nv_n}\right),
\]
where $L(\mathcal P)$ is the set appearing in \eqref{frequency}.  This
delays taking the limit, so that it has a chance of existing even when
some individual patch frequencies may fail to exist. We shall not need such
extensions here, however, since Theorem~\ref{density} below guarantees
that, for the sets studied in this paper, all patch frequencies exist.

\section{$k$-free points}\label{kfree}

As a convenient context for our results, we shall use the set $V=V(\Lambda,k)$
of $k$-free points of a lattice $\Lambda$ in $\mathbb R^n$.  For a point
$\bs l\ne\bs 0$ in $\Lambda$ define its \emph{$k$-content}, $c_k(\bs l)$,
to be the largest integer $c$ such that $\bs l\in c^k\Lambda$.  Then
$c_k(\bs l)$ is also the least common multiple of the numbers $d$ with
$d^{-k}{\bs l}\in\Lambda$, i.e.\ $d^{-k}{\bs l}\in\Lambda$ if and only
if $d\mid c_k(\bs l)$. For consistency and convenience, we define
$c_k(\bs 0)=\infty$, with the understanding that $d\mid\infty$ for any
number 
$d$. The \emph{$k$-free points}, $V=V(\Lambda,k)$, of
$\Lambda$ are the points with $c_k(\bs l)=1$. One can see that $V$ is
non-periodic, i.e.\ $V$ has no nonzero translational symmetries. As particular cases we have
the visible points of $\Lambda$ (with $n\ge2$ and $k=1$), treated in \cite{BMP},
and the $k$-free integers (with $\Lambda=\mathbb Z$), treated in \cite{BMP},
\cite{Mir1}  and \cite{Mir2}.  The more general context has the advantage
of avoiding duplication of near-identical proofs.  When $n=k=1$, $V$ consists
of just the two points of $\Lambda$ closest to $\bs 0$ on either side, and we
exclude this trivial case. Since $\Lambda$ is a free Abelian group of rank $n$, its automorphism
group, $\operatorname{Aut}(\Lambda)$, is isomorphic to the matrix
group ${\rm GL}(n,\mathbb Z)$. Explicit
isomorphisms can be found by taking coordinates with respect to any
basis of $\Lambda$. Since the action of ${\rm GL}(n,\mathbb Z)$ on $\Lambda$ preserves 
$k$-content, the $k$-free points $V$ are invariant under the action of
${\rm GL}(n,\mathbb Z)$.

\begin{proposition}\label{holes}
$V$ is uniformly discrete, but has arbitrarily large holes. Moreover,
for any $r>0$, there is a set of holes in $V$ of inradius at least $r$
whose centres have positive density.
\end{proposition}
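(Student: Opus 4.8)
The plan is to establish the three assertions in order, noting that uniform discreteness is immediate, that ``arbitrarily large holes'' is a special case of the last sentence, and that the real content is the positive-density statement, for which the tool is the Chinese Remainder Theorem.

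\emph{Uniform discreteness.} Since $V\subseteq\Lambda$, the minimal distance between distinct points of $V$ is bounded below by the minimal distance between distinct points of the lattice $\Lambda$, which is positive. Hence $V$ is uniformly discrete, and nothing further is needed.

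\emph{Holes of large inradius with positive density.} Fix $r>0$. The idea is to produce lattice translations $\bs t$ for which the whole ball $B_r(\bs t)$ misses $V$ --- that is, every point of $\Lambda\cap B_r(\bs t)$ fails to be $k$-free --- and then to show that the admissible $\bs t$ fill out a complete coset of a sublattice, so that they have positive density. First I would list the finitely many lattice points near the origin, $\Lambda\cap B_r(\bs0)=\{\bs\lambda_1,\dots,\bs\lambda_m\}$. Because $\bs t\in\Lambda$ we have $\Lambda\cap B_r(\bs t)=\bs t+(\Lambda\cap B_r(\bs0))$, so it suffices to arrange that each translate $\bs t+\bs\lambda_j$ is non-$k$-free. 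To force this I would choose $m$ distinct primes $p_1,\dots,p_m$ (possible since there are infinitely many) and impose the congruences
\[
\bs t\equiv-\bs\lambda_j\pmod{p_j^k\Lambda},\qquad 1\le j\le m .
\]
If $\bs t$ satisfies the $j$th congruence then $\bs t+\bs\lambda_j\in p_j^k\Lambda$, so $p_j\mid c_k(\bs t+\bs\lambda_j)$ and this point is not $k$-free, as required.

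The key step is solvability together with the coset structure of the solution set. Since the moduli $p_j^k$ are pairwise coprime, one may fix a basis to identify $\Lambda$ with $\mathbb Z^n$ and solve the system coordinate-by-coordinate using the ordinary integer Chinese Remainder Theorem; equivalently, $\Lambda/M\Lambda\cong\bigoplus_{j=1}^m\Lambda/p_j^k\Lambda$ as abelian groups, where $M=\prod_{j=1}^m p_j^k$. Consequently the system has a solution and its full solution set is a single residue class $\bs t_0+M\Lambda$. Each member $\bs t$ of this class is the centre of a ball $B_r(\bs t)$ disjoint from $V$, i.e.\ a hole of inradius at least $r$. Finally, a coset of the sublattice $M\Lambda$ has the same density as $M\Lambda$ itself, and for a sublattice coset the limit in \eqref{densY} exists and equals $\delta(M\Lambda)=1/(M^n\,\mathrm{covol}(\Lambda))>0$. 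Thus the centres of these holes form a set of positive density, and letting $r$ grow without bound in particular yields arbitrarily large holes.

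I expect the only genuinely delicate point to be the passage from the existence of a single translation to a \emph{positive density} of hole centres: one must recognise that the admissible $\bs t$ constitute an entire coset of $M\Lambda$, rather than some sparser set. This rests squarely on the pairwise coprimality of the moduli $p_j^k$, hence on using \emph{distinct} primes for distinct points $\bs\lambda_j$, which is precisely what makes the Chinese Remainder Theorem applicable; everything else is routine.
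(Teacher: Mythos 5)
Your proof is correct and follows essentially the same route as the paper's: both force every point of a translated configuration $\Lambda\cap B_r(\bs 0)$ out of $V$ by imposing congruences $\bs t\equiv-\bs\lambda_j\pmod{p_j^k\Lambda}$ with pairwise coprime moduli, solve them simultaneously via the Chinese Remainder Theorem, and observe that the solutions form a full coset of $\bigl(\prod_j p_j\bigr)^{k}\Lambda$, hence have positive density. The only cosmetic difference is that the paper allows arbitrary pairwise coprime integers $m_j>1$ where you take distinct primes, which changes nothing.
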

\begin{proof}
Since $V\subset\Lambda$, the uniform discreteness is trivial. Now let
$C=\{\bs a_1,\dots,\bs a_s\}$ be any finite configuration of points in
$\Lambda$ (e.g., all points in a ball or a cube). Choose $s$
integers $m_1,\dots,m_s>1$ that are pairwise coprime (e.g., the first
$s$ primes). By~\eqref{CRT}, there is a point $\bs a\in\Lambda$ with
\[
\bs a\equiv -\bs a_i\pmod {m_i^k\Lambda}
\]
for $i=1,\dots,s$. Now for any $\bs x\equiv \bs
a\pmod{m_1^k\cdots m_s^k\Lambda}$ the configuration $C+\bs x=\{\bs a_1+\bs
x,\dots,\bs a_s+\bs x\}$ is congruent, in the geometric sense, to $C$
but no point in $C+\bs x$ is in $V$, since $\bs a _i+\bs x \in
m_i^k\Lambda$ for $i=1,\dots,s$. The points $\bs x$ have density
$1/((m_1\cdots m_s)^{nk}\det(\Lambda))>0$ by~\eqref{LambdaCount} and~\eqref{CRT}.
\end{proof}

For a natural number $P$, we define $V_P=V_P(\Lambda,k)$ to be the set
of points ${\bs l}\in\Lambda\setminus\{\bs0\}$ with $(c_k({\bs l}),P)=1$. Clearly $V_P$
is fully periodic with a lattice of periods that contains $P^k\Lambda$. The
$V_P$'s are partially ordered inversely to the divisibility partial order
on $\mathbb N$, that is, $V_{PQ}\subset V_P$ for all $P,Q$.  In fact,
more precisely, $V_{PQ}=V_P\cap V_Q$.  The intersection of all the $V_P$'s
is $V$, so if $P$ is divisible by all primes up to a large bound $V_P$
can be regarded as a set of ``potentially $k$-free" points.

For a finite subset $\mathcal F$ of $\Lambda$ and a positive
integer $m$, we shall use
\[\mathcal F/m\Lambda\]
to denote the set of cosets of $m\Lambda$ in $\Lambda$ that
are represented in $\mathcal F$.  We also write
\[D(\mathcal F):=\max_{\bs l,\bs m\in\mathcal F}\|\bs l-\bs m\|\]
for the diameter of $\mathcal F$, where $\|\cdot\|$ denotes the
Euclidean norm on $\mathbb R^n$.

Since entropies of sets in $\mathbb R^n$ vary under change of scale
inversely as the $n$th power of the scaling constant, it is sufficient to
consider lattices of determinant 1. (For other lattices the formula for
the entropy of $V$ must simply be divided by the determinant of $\Lambda$.)
We fix the following notation for the rest of this paper:
\begin{quote}
\emph{$\Lambda$ is a lattice of determinant $1$ in $\mathbb R^n$, $\lambda$
is the length of its shortest nonzero vector, $k$ is a natural number 
(with $k\ge2$ if $n=1$) and $V$ is the set of $k$-free points in $\Lambda$.}
\end{quote}

Also, for subsets $X,\mathcal P,\mathcal Q$ of $\Lambda$, with $X$
infinite but $\mathcal P,\mathcal Q$ finite, we define the locator set
\[L(X;\mathcal P,\mathcal Q):=
\{\bs t\in\Lambda:\mathcal P+\bs t\subset X,\ \mathcal Q+\bs t\subset\Lambda\setminus X\}\]
consisting of those lattice translations that locate $\mathcal P$
totally inside $X$ and $\mathcal Q$ totally outside $X$.

The genesis of our proof of positive, but non-maximal, patch-counting
entropy for the visible points is the observation that, of the 4 corners
of any unit square of the integer lattice in the plane, at least one
is invisible (because both its coordinates are even) but each of the
15 possibilities for the visibility or not of the corners, when the
possibility of their all being visible is excluded, can occur, depending
on the position of the square within the lattice.  This is the simplest
example of the fact that, in general, every $\rho$-patch contains an
irreducible minimum of points not in $V$ but for the remaining points
in the patch we can arrange that they are visible or not, independently
of each other by choosing the position of the patch in the lattice.
This leads to an exponentially large number of $\rho$-patches, the
number of which can be estimated quite accurately.

Our aim with the following lemma is to concentrate most of the necessary
inclusion-exclusion arguments into a single result from which ensuing
results can be fairly readily derived. For this reason it has several
parameters ($\mathcal P$, $m$, $\bs m$, $P$ and $\bs x$) and three
components to its error term.  Until the parameters are further
specified, there is no assumption that the error terms are of smaller order than the
main term.  To keep the proof short we have not made the error terms as
small as possible---in Section~\ref{errorterm} we make use
of the technique of \cite{Mir1} to vastly improve the last error term.

\begin{lemma}\label{VPpatches}
Let $\mathcal P$ be a finite subset of $\Lambda$, $m\in\mathbb N$,
$\bs m\in\Lambda$, $P$ be a natural number coprime to $m$ and $\bs x\in\mathbb R^n$.  Then
\[|L(V_P;\mathcal P,\emptyset)\cap(\bs m+m\Lambda)\cap B_R(\bs x)|\]
is estimated by a main term
\begin{equation}\label{main}
\frac{R^nv_n}{m^n}\prod_{p\mid P}\biggl(1-\frac{|\mathcal P/p^k\Lambda|}{p^{nk}}\biggr)
\end{equation}
with error
\begin{equation}\label{error}
O\Bigl(R^{1/k}+R^{n-1}(\min\{\log\log P,\log S\})^{|\mathcal P|}
+\min\{\tau_{|\mathcal P|+1}(P),(S/\lambda)^{|\mathcal P|/k}\}\Bigr),
\end{equation}
where $S:=R+\|\bs x\|+\max_{\bs p\in\mathcal P}\|\bs p\|$, $\tau_r$
is the \mbox{$r$-divisor} function in \eqref{taubound}, and the
$O$-constant depends only on $\Lambda$, $k$ and $\mathcal P$.
\end{lemma}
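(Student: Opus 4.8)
The plan is to turn the defining condition of the locator set into a family of congruences and then to evaluate the resulting count by M\"obius inversion over the primes dividing $P$. Since $p\mid c_k(\bs l)$ exactly when $\bs l\in p^k\Lambda$, a point lies in $V_P$ precisely when it is nonzero and avoids $p^k\Lambda$ for every prime $p\mid P$. Thus $\bs t\in L(V_P;\mathcal P,\emptyset)$ iff, for every $\bs p\in\mathcal P$ and every prime $p\mid P$, one has $\bs t\not\equiv-\bs p\pmod{p^k\Lambda}$; the finitely many $\bs t$ with $\bs p+\bs t=\bs 0$ are absorbed into the error. For a fixed $p$ the forbidden residues of $\bs t$ modulo $p^k\Lambda$ are exactly the $|\mathcal P/p^k\Lambda|$ cosets represented by $-\mathcal P$, and distinct such cosets are disjoint, so the indicator of the admissible $\bs t$ factors as $\prod_{p\mid P}\bigl(1-g_p(\bs t)\bigr)$, where $g_p$ is the indicator of those forbidden residues.

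I would then expand this product. Writing $Q$ for the radical of $P$, inclusion--exclusion gives
\[
|L(V_P;\mathcal P,\emptyset)\cap(\bs m+m\Lambda)\cap B_R(\bs x)|
=\sum_{d\mid Q}\mu(d)\sum_{\phi}\bigl|\{\bs t\in(\bs m+m\Lambda)\cap B_R(\bs x):\bs t\equiv\bs a_\phi\pmod{d^k\Lambda}\}\bigr|,
\]
where $\phi$ ranges over the $\prod_{p\mid d}|\mathcal P/p^k\Lambda|$ ways of selecting, for each $p\mid d$, one forbidden residue, and $\bs a_\phi$ is the residue modulo $d^k\Lambda$ assembled from these via~\eqref{CRT}. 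As $(m,P)=1$, each inner set is a single residue class modulo $md^k\Lambda$, whose point count in $B_R(\bs x)$ equals $R^nv_n/(md^k)^n$ up to an error, by~\eqref{LambdaCount}. Summing the principal parts and recognising the Euler product recovers the main term~\eqref{main} exactly, since
\[
\frac{R^nv_n}{m^n}\sum_{d\mid Q}\frac{\mu(d)}{d^{nk}}\prod_{p\mid d}|\mathcal P/p^k\Lambda|
=\frac{R^nv_n}{m^n}\prod_{p\mid P}\Bigl(1-\frac{|\mathcal P/p^k\Lambda|}{p^{nk}}\Bigr).
\]

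It remains to control the accumulated error, which is where the real work lies. I would bound the point count in each coset $\bs a_\phi+md^k\Lambda$ by passing to $\Lambda$-points in the rescaled ball of radius $R/(md^k)$, so that~\eqref{LambdaCount} yields an error $O\bigl((R/(md^k))^{n-1}+1\bigr)$, and I would split the $d$-range at the scale $d^k\asymp S/\lambda$ beyond which the relevant coset meets $B_R(\bs x)$ in at most one point, because the shortest nonzero vector of $d^k\Lambda$ has length $d^k\lambda$. The fractional parts left over in extracting the volume main terms across the bulk range accumulate to $O(R^{1/k})$. The surface contributions $R^{n-1}\sum_{d}\bigl(\prod_{p\mid d}|\mathcal P/p^k\Lambda|\bigr)(md^k)^{-(n-1)}$ I would estimate by decomposing each squarefree $d$ into the pairwise coprime parts attached to the individual points of $\mathcal P$, which bounds the sum by $\bigl(\prod_{p\mid P}(1+p^{-k(n-1)})\bigr)^{|\mathcal P|}$; this is $O(1)$ when $k(n-1)\ge2$ and, by Mertens' theorem (the product being largest for primorial $P$, where $\log P\asymp\theta(y)\asymp y$), is $O((\log\log P)^{|\mathcal P|})$ when $k(n-1)=1$, giving the second error term. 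The far range $d^k\gtrsim S/\lambda$ is handled by the same coprime decomposition, now with each factor $d_i\le(S/\lambda)^{1/k}$, producing the crude bound $\min\{\tau_{|\mathcal P|+1}(P),(S/\lambda)^{|\mathcal P|/k}\}$ of~\eqref{taubound}.

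The main obstacle is precisely this last, arithmetic, bookkeeping: a naive term-by-term surface estimate carries the multiplicity $\prod_{p\mid d}|\mathcal P/p^k\Lambda|$, which would inflate the bound to $2^{\omega(P)}$-type size. The decisive idea is the coprime factorisation of $d$ across the points of $\mathcal P$, which converts the one problematic sum into a product of $|\mathcal P|$ Mertens-type sums and is exactly what produces the exponent $|\mathcal P|$ in each error term. By contrast the reformulation, the M\"obius expansion and the extraction of~\eqref{main} are routine once the membership condition has been rephrased congruence-wise.
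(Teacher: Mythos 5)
Your proposal is correct and follows essentially the same route as the paper's proof: M\"obius/inclusion--exclusion over the squarefree divisors of $P$, lattice-point counting per congruence class via \eqref{LambdaCount}, the Euler product as main term, and the same three error sources (the Dirichlet-series tail for $R^{1/k}$, surface terms controlled by Mertens-type sums for the middle term, and counting the admissible divisor tuples for the last term). The only organizational difference is that the paper first splits off the primes at which points of $\mathcal P$ collide modulo $p^k\Lambda$ via \eqref{CRT}, so that its M\"obius variables $d_i$ are indexed by the points of $\mathcal P$ and automatically satisfy $d_i^k<S/\lambda$, whereas you index by forbidden residues and impose the bulk/far split by hand --- with your flagged exclusion of the points $\bs t=-\bs p$ being exactly what makes the far-range bound $(S/\lambda)^{|\mathcal P|/k}$ legitimate.
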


\begin{proof}
We may clearly assume that $P$ is squarefree.  For each prime $p$
the points $\bs t$ with $\bs t+\mathcal P\subset V_p$ consist of
$p^{nk}-|\mathcal P/p^k\Lambda|$ cosets of $p^k\Lambda$ in $\Lambda$
(those cosets $\bs t+p^k\Lambda$ with $(-\bs t+p^k\Lambda)\cap \mathcal P=\emptyset$).  Clearly $|\mathcal P/p^k\Lambda|=|\mathcal P|$
when $p^k\lambda>D(\mathcal P)$.  Let $Q$ be the product of those prime factors
$p$ of $P$ with $|\mathcal P/p^k\Lambda|<|\mathcal P|$.  By the Chinese
Remainder Theorem \eqref{CRT}, $L(V_Q;\mathcal P,\emptyset)\cap(\bs m+m\Lambda)$ consists of
\[\prod_{p\mid Q}(p^{nk}-|\mathcal P/p^k\Lambda|)\]
cosets of $mQ^k\Lambda$ in $\Lambda$.  For each such coset $\bs q+mQ^k\Lambda$ we have
\begin{equation}
(\bs q+mQ^k\Lambda)\cap L(V_P;\mathcal P,\emptyset)=(\bs q+mQ^k\Lambda)\cap L(V_{P/Q};\mathcal P,\emptyset).
\end{equation}

Now write $\mathcal P=\{\bs p_1,\ldots,\bs p_r\}$. Since $\bs p_i+\bs t\in V_{P/Q}$
if and only if $c_k(\bs p_i+\bs t)$ is coprime to ${P/Q}$, it follows
from \eqref{Mobius} that for each of these cosets the cardinal of
$L(V_P;\mathcal P,\emptyset)\cap(\bs q+mQ^k\Lambda)\cap B_R(\bs x)$ is
\[\sum_{\substack{\bs t\in\Lambda\cap B_R(\bs x)\\
\bs t-\bs q\in mQ^k\Lambda}}
\hspace{1.5mm}\prod_{i=1}^r\hspace{-2mm}
\sum_{\substack{d\mid P/Q\\
d\mid c_k(\bs p_i+\bs t)}}
\mu(d).\]
Reversing the order of summation gives
\begin{equation}\label{VPcount}
\begin{array}[t]{c}
\displaystyle\sum_{d_1\mid P/Q}\hspace{1mm}\sum_{d_2\mid P/Q}\cdots\sum_{d_r\mid P/Q}\\
\text{\scriptsize$d_i^k<S/\lambda$ for each $i$}
\end{array}
\mu(d_1d_2\cdots d_r)\hspace{-4mm}
\sum_{\substack{\bs t\in\Lambda\cap B_R(\bs x)\\[.5mm]
\bs t\in\bs q+mQ^k\Lambda\\[1mm]
\bs t\in -\bs p_i+d_i^k\Lambda}}
1,
\end{equation}
where replacing $\mu(d_1)\cdots\mu(d_r)$ by $\mu(d_1\cdots d_r)$
is justified by the fact that the $d_i$'s are pairwise coprime since
any common factor of $c_k(\bs p_i+\bs t)$ and $c_k(\bs p_j+\bs t)$
divides $c_k(\bs p_i-\bs p_j)$, all of whose
prime factors divide $Q$.  Writing $d_1\cdots d_r=d$ and noting that
$(mQ,d)=1$, we can apply \eqref{LambdaCount} with $\Lambda$ replaced
by $m(dQ)^k\Lambda$ to obtain, for the inner sum, the estimate
\begin{equation}\label{innersum}
\frac{R^nv_n}{m^n(dQ)^{nk}}+O(R^{n-1}/m^{n-1}(dQ)^{(n-1)k})+O(1).
\end{equation}
Substituting this estimate in \eqref{VPcount} gives a main term
\[\frac{R^nv_n}{m^nQ^{nk}}\prod_{p\mid P/Q}\left(1-\frac{r}{p^{nk}}\right)\]
with error term \eqref{error}.  The main term arises by removing
the conditions $d_i^k<S/\lambda$ from the sum of the main term
in \eqref{innersum} then using the fact that $\mu(d)\tau_r(d)$
(where $\tau_r(d)$ is the number of ways of expressing $d$ as a
product of $r$ natural numbers) is a multiplicative function,
whose value is $-r$ at primes and 0 at prime powers, to express
the extended sum as an Euler product, as in \eqref{Eulerprod}.
The first error term in \eqref{error} comes from the extra terms
included in the extended multiple sum, so is
\[\le\frac{rR^nv_n}{m^nQ^{nk}}\sum_{d_1^k\ge S/\lambda}\frac{1}{d_1^{nk}}\sum_{d_2=1}^\infty\frac{1}{d_2^{nk}}
\cdots\sum_{d_r=1}^\infty\frac{1}{d_r^{nk}}=O\left(R^nS^{-n+(1/k)}\right),\]
since each of the $r-1$ complete sums is $\le\zeta(2)<2$.
The other two error terms account for the sum over the error terms in
\eqref{innersum}. The logarithms in the middle error term are necessary
only in the case $n=2$, $k=1$, when the series $\sum d_i^{-(n-1)k}$
diverge but the partial sums can be estimated by using \eqref{sum1/d}
or the standard estimate for the partial sums of the harmonic series.
In all other cases these series converge and the middle error term
can be taken as $O(R^{n-1})$.  (When $n=1$, there is no middle error
term, since the first error term in \eqref{innersum} is then the
same as the last.)

Finally, summing over the cosets of $mQ^k\Lambda$ that make up
$L(V_Q;\mathcal P,\emptyset)\cap(\bs m+m\Lambda)$ gives the main term \eqref{main} (since
$|\mathcal P/p^k\Lambda|=r$ when $p\nmid Q$) and increases the error
term by a factor at most $Q^k$, which is bounded in terms of $k$ and $\mathcal P$.
\end{proof}

\begin{corollary}\label{cor}
If $\rho$ is a positive radius and $P$ is a natural number divisible
by every prime less than $\log\rho$, then
\begin{equation}\label{asymptotP}
|V_P\cap B_\rho(\bs0)|=\frac{\rho^nv_n}{\zeta(nk)}+o(\rho^n),
\end{equation}
\begin{equation}\label{asymptot}
|V\cap B_\rho(\bs0)|=\frac{\rho^nv_n}{\zeta(nk)}+o(\rho^n),
\end{equation}
\begin{equation}\label{largeP}
|(V_P\setminus V)\cap B_\rho(\bs0)|=o(\rho^n)
\end{equation}
and, for any $\bs x\in\mathbb R^n$,
\begin{equation}\label{ubound}
|V_P\cap B_\rho(\bs x)|\le\frac{\rho^nv_n}{\zeta(nk)}+o(\rho^n),
\end{equation}
where $\zeta$ is the Riemann $\zeta$-function.
\end{corollary}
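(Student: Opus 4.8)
The plan is to derive all four estimates from Lemma~\ref{VPpatches} by specializing its parameters. For the counting functions $|V_P\cap B_\rho(\bs x)|$ we take $\mathcal P=\{\bs 0\}$ (so $r=|\mathcal P|=1$), $m=1$, $\bs m=\bs 0$, and the ball $B_R(\bs x)$ with $R=\rho$. The main term \eqref{main} then becomes $\rho^nv_n\prod_{p\mid P}(1-|\{\bs 0\}/p^k\Lambda|/p^{nk})=\rho^nv_n\prod_{p\mid P}(1-p^{-nk})$, since the single point $\bs 0$ occupies exactly one coset of $p^k\Lambda$, giving $|\mathcal P/p^k\Lambda|=1$ for every prime $p$. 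The first task is to show that, under the hypothesis that $P$ is divisible by every prime below $\log\rho$, this finite Euler product is $1/\zeta(nk)+o(1)$. Since $\prod_{\text{all }p}(1-p^{-nk})=1/\zeta(nk)$ by the Euler product formula \eqref{Eulerprod}, the discrepancy is controlled by the tail $\prod_{p\geq\log\rho}(1-p^{-nk})$, which tends to $1$ as $\rho\to\infty$ because $\sum_p p^{-nk}<\infty$ (here $nk\geq2$, as $k\geq2$ when $n=1$).

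Next I would check that the error term \eqref{error} is $o(\rho^n)$ under the stated divisibility hypothesis. With $r=1$, $m=1$ and $S=\rho+\|\bs x\|+0=O(\rho)$, the three components become $O(\rho^{1/k})$, $O(\rho^{n-1}(\min\{\log\log P,\log S\})^{1})$, and $O(\min\{\tau_2(P),(S/\lambda)^{1/k}\})$. The first is $O(\rho^{1/k})=o(\rho^n)$ since $n\geq1$ and $k\geq1$ (indeed $1/k\le1\le n$, with the degenerate case $n=k=1$ excluded). The middle term is $O(\rho^{n-1}\log\log S)=o(\rho^n)$. The third is dominated by $(S/\lambda)^{1/k}=O(\rho^{1/k})=o(\rho^n)$. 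Thus \eqref{ubound} follows immediately with the product bounded above by $1/\zeta(nk)$, and \eqref{asymptotP} follows upon centering at $\bs x=\bs 0$ and combining the main-term asymptotics with the error estimate.

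For \eqref{asymptot}, the plan is to pass from $V_P$ to $V$ by a limiting argument. Since $V=\bigcap_P V_P$ and $V\subset V_P$, we have $|V\cap B_\rho(\bs 0)|\le|V_P\cap B_\rho(\bs 0)|$, giving the upper bound directly from \eqref{asymptotP}. For the matching lower bound I would exploit that a point $\bs l\in V_P\setminus V$ must have $c_k(\bs l)$ divisible by some prime exceeding $\log\rho$, hence $\bs l\in p^k\Lambda$ for such a prime $p$; the count of such points in $B_\rho(\bs 0)$ is at most $\sum_{p>\log\rho}|p^k\Lambda\cap B_\rho(\bs 0)|=\sum_{p>\log\rho}O(\rho^n/p^{nk})=o(\rho^n)$, which is exactly the content of \eqref{largeP}. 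Subtracting this from \eqref{asymptotP} yields \eqref{asymptot}.

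The main obstacle I anticipate is establishing \eqref{largeP} cleanly, that is, making the passage from $V_P$ to $V$ uniform in the implicit dependence between $P$ and $\rho$. The subtlety is that the tail sum $\sum_{p>\log\rho}\rho^n/p^{nk}$ must be shown to be genuinely $o(\rho^n)$, which requires the convergence rate of $\sum_p p^{-nk}$ together with the lower cutoff at $\log\rho$; one must verify that the tail beyond $\log\rho$ vanishes as $\rho\to\infty$, using $\sum_{p>T}p^{-nk}\to0$ as $T\to\infty$ since $nk\geq2$. Once this tail estimate is in hand, all four displays follow by routine combination, so I would front-load the careful work into the tail bound and then assemble the corollary mechanically.
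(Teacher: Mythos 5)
Your handling of \eqref{asymptotP}, \eqref{asymptot} and \eqref{largeP} is correct and close to the paper's: Lemma~\ref{VPpatches} with $\mathcal P=\{\bs 0\}$, $m=1$, the Euler-product tail estimate, and a passage between $V_P$ and $V$ costing $o(\rho^n)$. (The paper runs the last step in the opposite order: it first proves \eqref{asymptot} by observing that $V\cap B_\rho(\bs0)=V_Q\cap B_\rho(\bs0)$ when $Q$ is the product of the primes below $(\rho/\lambda)^{1/k}$, then gets \eqref{largeP} by subtraction, whereas you prove \eqref{largeP} directly via $\sum_{p\ge\log\rho}|p^k\Lambda\cap B_\rho(\bs0)|=O\bigl(\rho^n\sum_{p\ge\log\rho}p^{-nk}\bigr)=o(\rho^n)$. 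Both routes work, though yours needs the small extra check that $|p^k\Lambda\cap B_\rho(\bs 0)\setminus\{\bs0\}|=O(\rho^n/p^{nk})$ holds also when $p^{nk}$ is comparable to $\rho^n$, where the $O(1)$ term of \eqref{LambdaCount} is not absorbed by the main term.)

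The genuine problem is \eqref{ubound}. You apply Lemma~\ref{VPpatches} at the centre $\bs x$ with the given $P$ and then set $S=\rho+\|\bs x\|=O(\rho)$. That is only valid for a \emph{fixed} $\bs x$: your $o(\rho^n)$ depends on $\|\bs x\|$, and if $\|\bs x\|$ grows with $\rho$, the components $\rho^{n-1}\log S$ and $(S/\lambda)^{1/k}$ that you selected from the two minima in \eqref{error} are no longer $o(\rho^n)$ (for $\|\bs x\|\ge\rho^{nk+1}$, say, the term $(S/\lambda)^{1/k}$ already exceeds $\rho^n$). But \eqref{ubound} is needed later uniformly in $\bs x$: in Theorem~\ref{visible} and in \eqref{setQ} it is applied with $\bs x=\bs t$ ranging over \emph{all} lattice points, to bound every $\rho$-patch at once. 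This is exactly why the paper does not argue as you do, but instead replaces $P$ by $P'$, the product of the primes below $\log\rho$, uses $V_P\subset V_{P'}$, and applies the lemma to $P'$: one may then take the \emph{other} components of the two minima, namely $\log\log P'$ and $\tau_2(P')$, which are independent of $\bs x$ and are $O(\rho^\epsilon)$ by \eqref{primeprod} and \eqref{taubound}. So your argument proves only a pointwise-in-$\bs x$ version of \eqref{ubound}, which is weaker than what the corollary is used for; this is a real gap, not a stylistic difference. (A further small slip: you say the product $\prod_{p\mid P}(1-p^{-nk})$ is ``bounded above by $1/\zeta(nk)$''; deleting factors lying in $(0,1)$ \emph{increases} a product, so it is bounded \emph{below} by $1/\zeta(nk)$, and equals $(1+o(1))/\zeta(nk)$ by \eqref{Eulertail}, which is what your argument actually requires.)
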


\begin{proof}
For \eqref{asymptotP} we use the lemma with $\mathcal P=\{\bs0\}$, $m=1$,
$\bs x=\bs0$ and $R=\rho$, then replace the product by $1/\zeta(nk)$
using \eqref{Eulertail}, with $N=\log\rho$, and \eqref{zetafn}.
This gives \eqref{asymptotP} with error term $O(\rho^n/\log^{nk-1}\rho)$.

Clearly $V_Q\cap B_\rho(\bs0)=V\cap B_\rho(\bs0)$ when $Q$ is the
product of all primes less than $(\rho/\lambda)^{1/k}$ (where
$\lambda$ is the length of the shortest nonzero vector in $\Lambda$),
giving \eqref{asymptot}, and \eqref{largeP} results from subtracting
this from \eqref{asymptotP}.

For \eqref{ubound} we use the lemma with $P$ replaced by $P'$, the
product of the primes less than $\log\rho$, together with \eqref{Eulertail}
and \eqref{zetafn}, and note that $V_P\subset V_{P'}$.  Then
\[|V_P\cap B_\rho({\bs x})|\le |V_{P'}\cap B_\rho({\bs x})|=
\frac{\rho^nv_n}{\zeta(nk)}+O\bigl(\rho^n/\log^{nk-1}\rho\bigr),\]
since $\log P'=O(\log\rho)$ and hence $\log\log P'$ and $\tau(P')$
are both $O(\rho^\epsilon)$ by \eqref{primeprod} and \eqref{taubound}.
\end{proof}

We note that \eqref{asymptot} tells us that $V$ has density
$1/\zeta(nk)$, generalizing Propositions~6 and 11 of \cite{BMP}
(though the error terms are not as good as those in \cite{BMP}
and much worse than those in \cite{Mir2} and \cite{Mir1}).
Also, one might regard \eqref{ubound} as saying that $V$ has a ``uniform
upper density" (or that $\Lambda\setminus V$ has a uniform lower density).

The following two theorems carry over to $k$-free points the results
of Mirsky \cite{Mir1,Mir2} (\cite{Mir1} improves
the error terms in \cite{Mir2})\footnote{As pointed out by J\"org
  Br\"udern, the work of Tsang~\cite{Tsang} can be extended to
  the case of $k$-free numbers and gives a further small improvement.} on $k$-free numbers.  A weaker result for squarefree
numbers goes back to Pillai \cite{P}.  Again, we make no attempt
in Theorem~\ref{density} to match the error term of \cite{Mir1},
postponing this to Section~\ref{errorterm}.

\begin{theorem}\label{density}
For any two disjoint finite subsets $\mathcal P$ and $\mathcal Q$ of $\Lambda$,
$L(V;\mathcal P,\mathcal Q)$ has a well defined density given by
\[\sum_{\mathcal F\subset\mathcal Q}(-1)^{|\mathcal F|}
\prod_p\left(1-\frac{|(\mathcal P\cup\mathcal F)/p^k\Lambda|}{p^{nk}}\right).\]
\end{theorem}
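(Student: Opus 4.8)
The plan is to reduce the two-sided locator condition to purely one-sided ones by inclusion--exclusion, and then to evaluate each one-sided density by the same passage from $V_P$ to $V$ that already underlies Corollary~\ref{cor}.

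First I would peel off the constraint imposed by $\mathcal Q$. For a fixed $\bs t$ the requirement $\mathcal Q+\bs t\subset\Lambda\setminus V$ is the product over $\bs q\in\mathcal Q$ of the events $\bs q+\bs t\notin V$, and expanding $\prod_{\bs q\in\mathcal Q}(1-\mathbf 1[\bs q+\bs t\in V])$ gives the pointwise identity
\[
\mathbf 1_{L(V;\mathcal P,\mathcal Q)}(\bs t)=\sum_{\mathcal F\subset\mathcal Q}(-1)^{|\mathcal F|}\,\mathbf 1_{L(V;\mathcal P\cup\mathcal F,\emptyset)}(\bs t),
\]
where the disjointness $\mathcal P\cap\mathcal Q=\emptyset$ ensures $|\mathcal P\cup\mathcal F|=|\mathcal P|+|\mathcal F|$. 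Summing over $\bs t\in\Lambda\cap B_R(\bs 0)$, dividing by $R^nv_n$ and letting $R\to\infty$ then shows that, once each one-sided density $\delta(L(V;\mathcal P\cup\mathcal F,\emptyset))$ is known to exist, the density of $L(V;\mathcal P,\mathcal Q)$ exists and equals $\sum_{\mathcal F\subset\mathcal Q}(-1)^{|\mathcal F|}\delta(L(V;\mathcal P\cup\mathcal F,\emptyset))$. It thus suffices to treat the case $\mathcal Q=\emptyset$ and to prove that, for any finite $\mathcal P'\subset\Lambda$,
\[
\delta\bigl(L(V;\mathcal P',\emptyset)\bigr)=\prod_p\Bigl(1-\frac{|\mathcal P'/p^k\Lambda|}{p^{nk}}\Bigr),
\]
which for $\mathcal P'=\{\bs 0\}$ recovers the density $1/\zeta(nk)$ of $V$ itself from \eqref{asymptot}.

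To evaluate this one-sided density I would run the argument of Corollary~\ref{cor} verbatim with $\{\bs 0\}$ replaced by $\mathcal P'$. Since $V\subset V_P$ we have $L(V;\mathcal P',\emptyset)\subset L(V_P;\mathcal P',\emptyset)$, and Lemma~\ref{VPpatches} (with $m=1$, $\bs m=\bs 0$, $\bs x=\bs 0$, $R=\rho$ and $P$ the product of the primes below $\log\rho$) evaluates $|L(V_P;\mathcal P',\emptyset)\cap B_\rho(\bs 0)|$ as $\rho^nv_n\prod_{p\mid P}(1-|\mathcal P'/p^k\Lambda|/p^{nk})$ with error $o(\rho^n)$, exactly as in the proof of \eqref{asymptotP}: with this choice of $P$ one has $\log\log P=O(\log\log\rho)$ and $\tau_{|\mathcal P'|+1}(P)=O(\rho^{\varepsilon})$, so all three contributions to \eqref{error} are $o(\rho^n)$. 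The surplus points of $L(V_P;\mathcal P',\emptyset)$ are those $\bs t$ for which some $\bs p+\bs t$ lies in $V_P\setminus V$, whence
\[
\bigl(L(V_P;\mathcal P',\emptyset)\setminus L(V;\mathcal P',\emptyset)\bigr)\cap B_\rho(\bs 0)\subset\bigcup_{\bs p\in\mathcal P'}\bigl(-\bs p+(V_P\setminus V)\bigr)\cap B_\rho(\bs 0),
\]
and \eqref{largeP} bounds the right-hand side by $|\mathcal P'|\cdot o(\rho^n)=o(\rho^n)$. Subtracting, and noting that the partial products $\prod_{p\mid P}(1-|\mathcal P'/p^k\Lambda|/p^{nk})$ converge to the full (convergent, since $nk\ge 2$) product as $\rho\to\infty$, gives the stated value of $\delta(L(V;\mathcal P',\emptyset))$.

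Assembling, the second step supplies each density appearing on the right of the inclusion--exclusion identity of the first step, and substituting its Euler-product value yields the formula in the statement. The only point requiring genuine care is the $V_P\to V$ transition: one must take $P$ large enough that $V_P$ agrees with $V$ closely on $B_\rho(\bs 0)$ yet small enough to keep the error \eqref{error} of the Lemma below $\rho^n$, and this is precisely what the cutoff at $\log\rho$ together with the sieve bound \eqref{largeP} achieves. By comparison the inclusion--exclusion and the convergence of the Euler product are routine.
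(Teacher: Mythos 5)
Your proposal is correct and takes essentially the same route as the paper's own proof: the same inclusion--exclusion over subsets $\mathcal F\subset\mathcal Q$, the same application of Lemma~\ref{VPpatches} with $m=1$ and $P$ the product of the primes below $\log R$, the same passage from $V_P$ to $V$ via \eqref{largeP} (the paper invokes the proof of \eqref{largeP}, you phrase it as a union bound over $\bs p\in\mathcal P'$, which is the same argument), and the same extension of the partial Euler product to the full convergent product. The only difference is cosmetic: you settle for qualitative $o(\rho^n)$ error terms where the paper records $O(R^n/(\log R)^{nk-1})$, which the statement of the theorem does not require.
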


\begin{proof}
By the inclusion-exclusion principle \eqref{in-ex} applied to
$L(V;\mathcal P,\emptyset)\cap B_R(\bs0)$, with $P_i$ being the property that
$\bs q_i+\bs t\in V$ (where $\mathcal Q=\{\bs q_1,\bs q_2,\ldots\}$), we have
\[|L(V;\mathcal P,\mathcal Q)\cap B_R(\bs0)|=
\sum_{\mathcal F\subset\mathcal Q}(-1)^{|\mathcal F|}|L(V;\mathcal P\cup\mathcal F,\emptyset)\cap B_R(\bs0)|.\]
Now Lemma~\ref{VPpatches} with $P$ equal to the product of
the primes less than $\log R$ gives the estimate
\[R^nv_n\prod_{p<\log R}\biggl(1-\frac{|(\mathcal P\cup\mathcal F)/p^k\Lambda|}{p^{nk}}\biggr)
+O(R^{1/k}+R^{n-1+\epsilon})\]
for $|L(V_P;\mathcal P\cup\mathcal F,\emptyset)\cap B_R(\bs0)|$, the
proof of \eqref{largeP}
of Corollary~\ref{cor} shows that $V_P$ can be replaced by $V$ at
the expense of an extra error term $O(R^n/(\log R)^{nk-1})$, and
\eqref{Eulertail} allows the product to be extended over all primes
with a similar extra error term. Altogether, this gives the estimate
\[R^nv_n\sum_{\mathcal F\subset\mathcal Q}(-1)^{|\mathcal F|}\prod_{p}\biggl(1-\frac{|(\mathcal P\cup\mathcal F)/p^k\Lambda|}{p^{nk}}\biggr)
+O(R^n/(\log R)^{nk-1})\]
for $|L(V;\mathcal P,\mathcal Q)\cap B_R(\bs0)|$.
\end{proof}

\begin{theorem}\label{equivalence}
For disjoint finite subsets $\mathcal P$ and $\mathcal Q$
of $\Lambda$, the following statements are equivalent:
\begin{itemize}
\item[(i)]$|\mathcal P/p^k\Lambda|<p^{nk}$ for every prime $p$;
\item[(ii)]$L(V;\mathcal P,\mathcal Q)$ is non-empty;
\item[(iii)]$L(V;\mathcal P,\mathcal Q)$ has positive density.
\end{itemize}
\end{theorem}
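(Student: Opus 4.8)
The plan is to establish the cycle of implications (iii)$\Rightarrow$(ii)$\Rightarrow$(i)$\Rightarrow$(iii). The first of these is immediate, since a set of positive density is certainly non-empty. The substance lies in the other two, of which (i)$\Rightarrow$(iii) is the real work.

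For (ii)$\Rightarrow$(i) I would argue by contraposition. There are exactly $p^{nk}$ cosets of $p^k\Lambda$ in $\Lambda$, so $|\mathcal P/p^k\Lambda|\le p^{nk}$ always, and the negation of (i) is that $|\mathcal P/p^k\Lambda|=p^{nk}$ for some prime $p$, i.e.\ that $\mathcal P$ meets every coset of $p^k\Lambda$. Since translation by $\bs t$ permutes these cosets, $\mathcal P+\bs t$ also meets every coset for each $\bs t\in\Lambda$; in particular some $\bs p_i+\bs t$ lies in the zero coset $p^k\Lambda$, whence $p\mid c_k(\bs p_i+\bs t)$ (or $\bs p_i+\bs t=\bs 0$), so that $\bs p_i+\bs t\notin V$ in either case. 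Thus $\mathcal P+\bs t\not\subset V$ for every $\bs t$, giving $L(V;\mathcal P,\mathcal Q)=\emptyset$ and proving (ii)$\Rightarrow$(i).

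For (i)$\Rightarrow$(iii) I would not attempt to show directly that the alternating sum of Theorem~\ref{density} is positive---the signs make this awkward---but instead exhibit an explicit subset of $L(V;\mathcal P,\mathcal Q)$ of positive density. Write $\mathcal Q=\{\bs q_1,\dots,\bs q_s\}$ and choose distinct primes $p_1,\dots,p_s$, all so large that $p_j^k\lambda>D(\mathcal P\cup\mathcal Q)$; then $\mathcal P\cup\mathcal Q$ injects into $\Lambda/p_j^k\Lambda$ for every $j$. By the Chinese Remainder Theorem~\eqref{CRT} there is $\bs a\in\Lambda$ with $\bs a\equiv-\bs q_j\pmod{p_j^k\Lambda}$ for $j=1,\dots,s$; set $M=p_1\cdots p_s$. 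For any $\bs t\equiv\bs a\pmod{M^k\Lambda}$, each $\bs q_j+\bs t$ lies in $p_j^k\Lambda\subset\Lambda\setminus V$ (as $\bs 0\notin V$), so that $\mathcal Q+\bs t\subset\Lambda\setminus V$ automatically; moreover $\bs p_i+\bs t\equiv\bs p_i-\bs q_j\not\equiv\bs 0\pmod{p_j^k\Lambda}$ by the size of $p_j$, so no point of $\mathcal P+\bs t$ is rendered non-$k$-free at any $p_j$. It therefore remains only to impose $\mathcal P+\bs t\subset V$, a condition concerning the primes $p\nmid M$ alone.

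The set of $\bs t\equiv\bs a\pmod{M^k\Lambda}$ satisfying $\mathcal P+\bs t\subset V$ is then contained in $L(V;\mathcal P,\mathcal Q)$, and its density can be computed exactly as in the proof of Theorem~\ref{density}: applying Lemma~\ref{VPpatches} with $m=M^k$, $\bs m=\bs a$ and $P$ the product of the primes $p\nmid M$ below $\log R$, then passing from $V_P$ to $V$ and extending the product by~\eqref{Eulertail}, yields the density
\[
\frac{1}{M^{nk}}\prod_{p\nmid M}\biggl(1-\frac{|\mathcal P/p^k\Lambda|}{p^{nk}}\biggr).
\]
Under hypothesis (i) every factor is strictly positive, and the factors equal $1-|\mathcal P|/p^{nk}$ for all large $p$, so the product converges to a positive limit; hence this density is positive, and since the density of $L(V;\mathcal P,\mathcal Q)$ exists by Theorem~\ref{density} and dominates it, (iii) follows. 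The main obstacle is the bookkeeping in this last step---ensuring the $p_j$ are simultaneously large enough to keep $\mathcal P$ uncontaminated and distinct enough for the CRT, and checking that Lemma~\ref{VPpatches} applies verbatim with the composite modulus $M^k$---but no idea beyond those already in Lemma~\ref{VPpatches} and Theorem~\ref{density} is needed.
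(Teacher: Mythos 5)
Your proposal is correct and follows essentially the same route as the paper's own proof: the trivial implication (iii)$\Rightarrow$(ii), the contrapositive coset argument for (ii)$\Rightarrow$(i), and for (i)$\Rightarrow$(iii) the choice of distinct primes $p_j$ with $p_j^k\lambda>D(\mathcal P\cup\mathcal Q)$, a CRT coset killing $\mathcal Q+\bs t$ while protecting $\mathcal P+\bs t$, an application of Lemma~\ref{VPpatches} on that coset with $P$ the product of small primes coprime to the modulus, the passage from $V_P$ to $V$ via \eqref{largeP}, and the final upgrade from positive lower density to positive density via Theorem~\ref{density}. The only cosmetic difference is that you write out the limiting product $\frac{1}{M^{nk}}\prod_{p\nmid M}\bigl(1-\frac{|\mathcal P/p^k\Lambda|}{p^{nk}}\bigr)$ explicitly where the paper is content with a positive constant $C$.
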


\begin{proof}
Clearly (iii) implies (ii) and, almost as clearly, (ii) implies (i),
since if $\mathcal P$ contains a complete set of coset representatives
for $p^k\Lambda$ then, for every $\bs t\in\Lambda$, some point of
$\mathcal P+\bs t$ is in $p^k\Lambda$ (so not in $V$).

Now assume (i) holds.  For each $\bs q\in\mathcal Q$ choose a
different prime $p(\bs q)>(D(\mathcal P\cup\mathcal Q)/\lambda)^{1/k}$
and let $m$ be the product of the $p(\bs q)$'s.  By the Chinese
Remainder Theorem, there is an $\bs m\in\Lambda$ such that
\[\bs t\equiv\bs m\mbox{ (mod $m^k\Lambda$)}\iff
\bs t\equiv -\bs q\mbox{ (mod $p(\bs q)^k\Lambda$)}\quad\forall\bs q\in\mathcal Q.\]
Then for $\bs t\equiv\bs m$~(mod $m^k\Lambda$)
we have $\mathcal Q+\bs t\subset\Lambda\setminus V$ and
$\mathcal P+\bs t\subset V_m$ (the latter using the fact that
for every $\bs p\in\mathcal P$ and every prime factor $p(\bs q)$ of $m$,
$\bs q+\bs t\in p(\bs q)^k\Lambda$ and $\|\bs p-\bs q\|<p(\bs q)^k\lambda$,
ensuring that $\bs p+\bs t\not\in p(\bs q)^k\Lambda$).  Now Lemma~\ref{VPpatches},
with $P$ the product of the primes less than $\log R$ not dividing $m$,
gives a main term $CR^n$ with error $O(\max\{R^{n-1}\log R,R^{1/k}\})$
for the cardinal of a subset of the points
$\bs t\in\Lambda\cap B_R(\bs0)$ with $\mathcal P+\bs t\subset V_{mP}$
and $\mathcal Q+\bs t\subset\Lambda\setminus V$, where the constant $C$
is positive since the product in \eqref{main} has every term positive.
By \eqref{largeP} of Corollary~\ref{cor}, the number of these
points with $\mathcal P+\bs t\not\subset V$ is $o(R^n)$.  Hence
$L(V;\mathcal P,\mathcal Q)$ has positive lower density, and
so, by Theorem~\ref{density}, positive density.
\end{proof}

An interesting feature of Theorem~\ref{equivalence} is that the criterion
(i) is independent of $\mathcal Q$. This means, for example, that
\[L(V;\mathcal P,\emptyset)\ne\emptyset\Rightarrow
\delta(L(V;\mathcal P,\mathcal Q))>0\quad
\forall\mathcal Q\mbox{ with }\mathcal P\cap\mathcal Q=\emptyset,\]
which tells us, in particular, that every subset of a patch of $V$ is a patch of $V$.

\section{Examples}\label{examples}

Theorem~\ref{density} allows us to calculate the frequencies
of $\rho$-patches of $V$ in terms of the products
\[\Pi_r(nk):=\prod_{p>r^{1/nk}}\left(1-\frac{r}{p^{nk}}\right)\]
for $r=0,1,\ldots,|\Lambda\cap B_\rho(\bs0)|$.  Here, we
give two simple examples that both have $nk=2$ and that have
$|\Lambda\cap B_\rho(\bs0)|=3$ and 5, respectively.  So we
need the products
\[\begin{array}{l}
\Pi_1(2)=1/\zeta(2)=6/\pi^2=0.6079271\ldots,\\[1mm]
\Pi_2(2)=0.3226340\ldots\mbox{ (the Feller-Tornier constant),}\\[1mm]
\Pi_3(2)=0.1254869\ldots,\quad\Pi_4(2)=0.3785994\ldots,\quad\Pi_5(2)=0.2733455\ldots,
\end{array}\]
whose values can be calculated efficiently by the method described in \cite{Mor}.

Our first example is to find the frequencies of all 2-patches when $V$ is
the set of squarefree numbers.  Here $\Lambda=\mathbb Z$, $n=1$, $k=2$ and
$|\Lambda\cap B_2(\bs0)|=3$.  Since $-1,0,1$ are distinct mod $p^k$, for
every $p$, $|(\mathcal P\cup\mathcal F)/p^k|=|\mathcal P\cup\mathcal F|$
and $\nu(\mathcal P)$ depends only on $|\mathcal P|$ in this case.
Table~\ref{SFtable} gives the frequencies of 2-patches of all possible sizes,
both in terms of the above products and numerically, and Figure~\ref{SFfig}
depicts the patches themselves, with their frequencies. There are 3 patches
each of sizes 1 and 2, and we check that the sum, $\sum\nu(\mathcal P)$, of
the frequencies of all patches is 1 and that the average patch size,
$\sum\nu(\mathcal P)|\mathcal P|$, is $3\delta(V)=18/\pi^2$. The patches of
size 2 are the most frequent, as is to be expected since 2 is the closest
integer to $3\delta(V)$: indeed, 59\% of all locations have patches of size 2.
The empty patch is by far the rarest, occurring at less than 2\% of locations. The
radius $\rho=2$ is the largest for which every subset of $\Lambda\cap B_\rho(\bs0)$
occurs as a patch of $V$: of the 32 subsets of $\Lambda\cap B_3(\bs0)$ the 3
that contain 4 or 5 consecutive points do not occur as patches of $V$.

\begin{table}
\[\begin{array}{c|r|l}
|\mathcal P|&\multicolumn{1}{c|}{\nu(\mathcal P)}&
\multicolumn{1}{c}{\text{Numerical value}}\\
\hline
\rule{0pt}{5mm}3&\Pi_3(2)&0.125486980905\ldots\\
\rule{0pt}{5mm}2&\Pi_2(2)-\Pi_3(2)&0.197147118033\ldots\\
\rule{0pt}{5mm}1&\Pi_1(2)-2\Pi_2(2)+\Pi_3(2)&0.088145884881\ldots\\
\rule{0pt}{5mm}0&1-3\Pi_1(2)+3\Pi_2(2)-\Pi_3(2)&0.018634010349\ldots
\end{array}\]
\caption{Frequencies of the 2-patches of the squarefree numbers.}\label{SFtable}
\end{table}

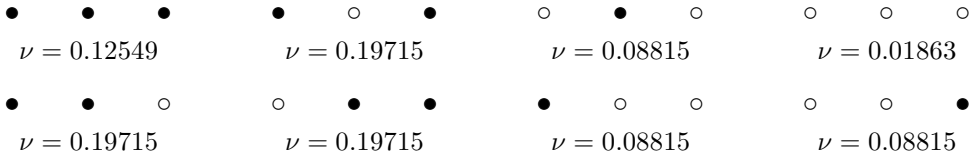
\begin{figure}
\centering
\begin{picture}(125,21)(0,-1)
\multiput(0,5)(35,0){4}{\multiput(0,0)(0,12){2}{\multiput(0,0)(10,0){3}{\circle{1.5}}}}
\multiput(0,17)(10,0){3}{\circle*{1.5}}
\multiput(35,17)(20,0){2}{\circle*{1.5}}
\put(80,17){\circle*{1.5}}
\multiput(0,5)(10,0){2}{\circle*{1.5}}
\multiput(45,5)(10,0){2}{\circle*{1.5}}
\put(70,5){\circle*{1.5}}
\put(125,5){\circle*{1.5}}
\put(10,12){\makebox(0,0){\small$\nu=0.12549$}}
\put(45,12){\makebox(0,0){\small$\nu=0.19715$}}
\put(80,12){\makebox(0,0){\small$\nu=0.08815$}}
\put(115,12){\makebox(0,0){\small$\nu=0.01863$}}
\put(10,0){\makebox(0,0){\small$\nu=0.19715$}}
\put(45,0){\makebox(0,0){\small$\nu=0.19715$}}
\put(80,0){\makebox(0,0){\small$\nu=0.08815$}}
\put(115,0){\makebox(0,0){\small$\nu=0.08815$}}
\end{picture}
\caption{The 2-patches of the squarefree numbers ($\Lambda=\mathbb Z$, $k=2$) with
their frequencies accurate to 5 decimal places. The black dots are points of $V$
and the open circles other lattice points.  The top row contains the patches with
mirror symmetry and the bottom row the two mirror image pairs.}\label{SFfig}
\end{figure}

Our other example is the $\sqrt2$-patches of the visible points, $V$, in $\mathbb Z^2$,
where $\Lambda=\mathbb Z^2$, $n=2$, $k=1$ and $|\Lambda\cap B_2(\bs0)|=5$.
Figure~\ref{VISfig} shows the different patches, up to symmetry, with their frequencies.
The four patches in the top row have the full dihedral symmetry $D_4$; the two in the
second row have symmetry $D_2$, and give rise to another patch on rotation through
$\pi/2$; the remaining six patches have only reflection symmetry, and each gives rise
to three others on rotation through $\pm\pi/2$ and $\pi$. We can again check that
$\sum\nu(\mathcal P)=1$ and $\sum\nu(\mathcal P)|\mathcal P|=5\delta(V)=30/\pi^2$. This
time, however, the frequencies do not depend only on $|\mathcal P|$, and indeed no two
patches that are not symmetry related have the same frequency. Of the five patches
with $|\mathcal P|=4$, the symmetric one has frequency nearly 5 times that of each
of the other four, and the ratio of the frequencies of two of the patches with
$|\mathcal P|=3$ is nearly 30. Surprisingly, one of the patches with $|\mathcal P|=3$
(the commonest patch size) has frequency smaller than that of any patch except the
empty one. The empty patch itself occurs at less than 1 in 900 locations.  As in
the previous example, $\sqrt2$ is the largest radius for which every subset of
$\Lambda\cap B_\rho(\bs0)$ is a patch: of the 512 subsets of $\Lambda\cap B_{\sqrt3}(\bs0)$,
the 135 that contain all four vertices of a lattice square do not occur as patches of $V$.

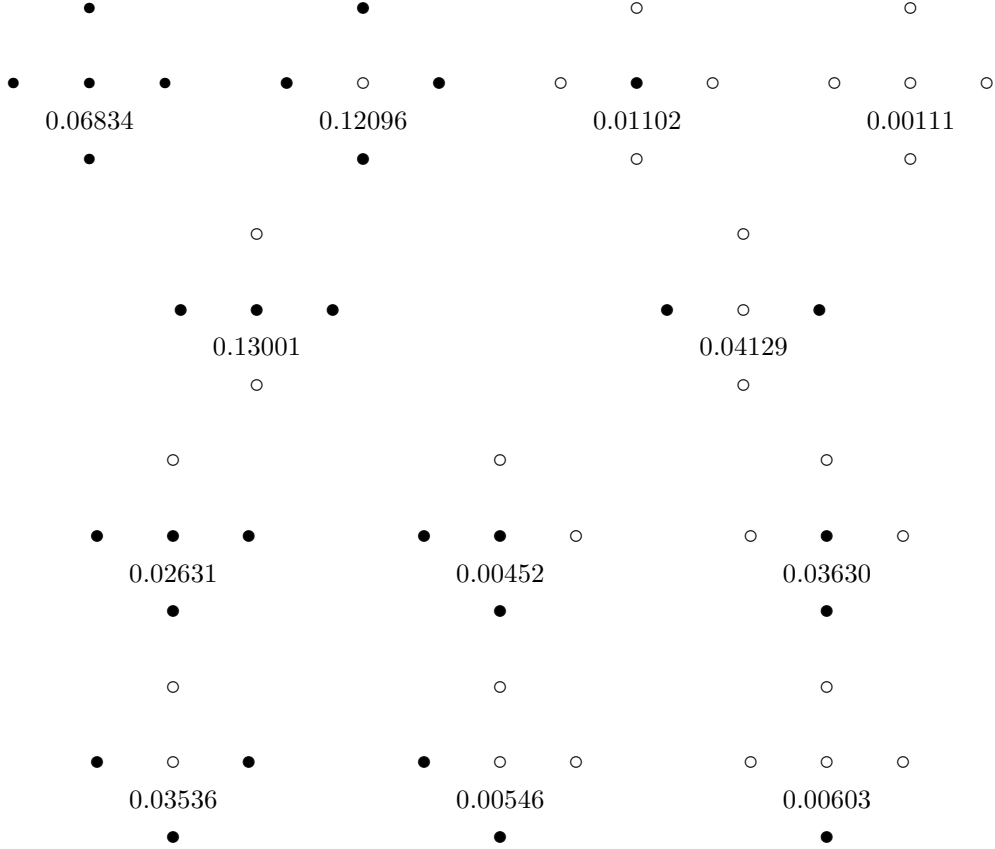
\begin{figure}
\centering
\begin{picture}(128,110)(0,5)
\newcommand{\patch}{\multiput(-10,0)(10,0){3}{\circle{1.5}}\multiput(0,-10)(0,20){2}{\circle{1.5}}}
\multiput(46,105)(36,0){3}\patch
\multiput(32,75)(64,0){2}\patch
\multiput(0,0)(0,30){2}{\multiput(21,15)(43,0){3}\patch}
\multiput(0,105)(10,0){3}{\circle*{1.5}}\multiput(10,95)(0,20){2}{\circle*{1.5}}
\multiput(36,105)(20,0){2}{\circle*{1.5}}\multiput(46,95)(0,20){2}{\circle*{1.5}}
\put(82,105){\circle*{1.5}}
\multiput(22,75)(10,0){3}{\circle*{1.5}}
\multiput(86,75)(20,0){2}{\circle*{1.5}}
\multiput(21,45)(43,0){3}{\circle*{1.5}}
\multiput(31,15)(0,30){2}{\circle*{1.5}}
\multiput(0,0)(0,30){2}{\multiput(11,15)(43,0){2}{\multiput(0,0)(10,-10){2}{\circle*{1.5}}}}
\multiput(107,5)(0,30){2}{\circle*{1.5}}
\put(10,100){\makebox(0,0){\small0.06834}}
\put(46,100){\makebox(0,0){\small0.12096}}
\put(82,100){\makebox(0,0){\small0.01102}}
\put(118,100){\makebox(0,0){\small0.00111}}
\put(32,70){\makebox(0,0){\small0.13001}}
\put(96,70){\makebox(0,0){\small0.04129}}
\put(21,40){\makebox(0,0){\small0.02631}}
\put(64,40){\makebox(0,0){\small0.00452}}
\put(107,40){\makebox(0,0){\small0.03630}}
\put(21,10){\makebox(0,0){\small0.03536}}
\put(64,10){\makebox(0,0){\small0.00546}}
\put(107,10){\makebox(0,0){\small0.00603}}
\end{picture}
\caption{The $\sqrt2$-patches of the visible points of $\mathbb Z^2$, up to
symmetry, with their frequencies, accurate to 5 decimal places.}\label{VISfig}
\end{figure}

\section{Entropy calculations}\label{entropy}

\begin{theorem}\label{visible}
$h_\T(V)=1/\zeta(nk)$.
\end{theorem}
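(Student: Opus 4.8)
The plan is to turn the computation of $h_\T(V)$ into a counting problem for $N(\rho)$ and then establish matching lower and upper bounds whose common value is $1/\zeta(nk)$.

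First I would pin down exactly what $N(\rho)$ counts. A $\rho$-patch is the set of occupied sites $\mathcal P:=(V-\bs t)\cap B_\rho(\bs0)\subseteq\Lambda\cap B_\rho(\bs0)$, and writing $\mathcal Q:=(\Lambda\cap B_\rho(\bs0))\setminus\mathcal P$ for the unoccupied sites, the configuration $\mathcal P$ occurs as a patch precisely when $L(V;\mathcal P,\mathcal Q)\neq\emptyset$. Since a patch is determined by (indeed, equals) its occupied-site set, $N(\rho)$ is the number of subsets $\mathcal P\subseteq\Lambda\cap B_\rho(\bs0)$ with $L(V;\mathcal P,\mathcal Q)\neq\emptyset$. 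By the equivalence of (i) and (ii) in Theorem~\ref{equivalence}, and crucially the fact that criterion (i) does not involve $\mathcal Q$, this is exactly the number of $\mathcal P\subseteq\Lambda\cap B_\rho(\bs0)$ satisfying $|\mathcal P/p^k\Lambda|<p^{nk}$ for every prime $p$. So the problem reduces to counting subsets of $\Lambda\cap B_\rho(\bs0)$ that, for every prime $p$, omit at least one residue class modulo $p^k\Lambda$.

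For the lower bound I would observe that every nonzero point of $p^k\Lambda$ has $k$-content divisible by $p$, and $\bs0\notin V$, so $V\cap p^k\Lambda=\emptyset$ for all $p$. Hence any $\mathcal P\subseteq V\cap B_\rho(\bs0)$ omits the zero class modulo $p^k\Lambda$ for every $p$ and thus satisfies criterion (i); equivalently, $V\cap B_\rho(\bs0)$ is the patch at the origin and every subset of a patch is a patch, by the remark following Theorem~\ref{equivalence}. Therefore all $2^{|V\cap B_\rho(\bs0)|}$ subsets of $V\cap B_\rho(\bs0)$ are genuine patches, giving $N(\rho)\ge 2^{|V\cap B_\rho(\bs0)|}$. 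Inserting the density asymptotics \eqref{asymptot}, namely $|V\cap B_\rho(\bs0)|=\rho^nv_n/\zeta(nk)+o(\rho^n)$, yields $h_\T(V)\ge 1/\zeta(nk)$.

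The upper bound is where the real work lies and what I expect to be the main obstacle, since one must show the constraints genuinely force the exponent down. The idea is to retain only finitely many constraints: fix a cutoff $y$ and use criterion (i) only for primes $p\le y$, so every valid $\mathcal P$ omits some class $c_p$ modulo $p^k\Lambda$ for each such $p$. Summing over the finitely many tuples $(c_p)_{p\le y}$ and bounding, for each fixed tuple, the admissible $\mathcal P$ by $2^{|M\setminus F|}$ (where $M=\Lambda\cap B_\rho(\bs0)$ and $F$ collects the points of $M$ lying in $c_p$ modulo $p^k\Lambda$ for some $p\le y$) gives a union bound. With $Q=\prod_{p\le y}p$, the admissible sites $M\setminus F$ form a union of classes modulo $Q^k\Lambda$, so by the lattice-point count \eqref{LambdaCount} their number is $\rho^nv_n\prod_{p\le y}(1-p^{-nk})+O(\rho^{n-1})$. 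As there are only $Q^{nk}$ tuples, taking logarithms and letting $\rho\to\infty$ gives $h_\T(V)\le\prod_{p\le y}(1-p^{-nk})$, and then $y\to\infty$ produces $h_\T(V)\le\prod_p(1-p^{-nk})=1/\zeta(nk)$. The delicate point is that the union bound must not inflate the exponential rate; this succeeds precisely because both the number of tuples and the per-tuple error term are independent of $\rho$, so only the clean exponent survives. Combining the two bounds gives $h_\T(V)=1/\zeta(nk)$.
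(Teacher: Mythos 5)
Your proof is correct, and while your lower bound coincides with the paper's (every subset of $V\cap B_\rho(\bs 0)$ is a $\rho$-patch by Theorem~\ref{equivalence}, then insert \eqref{asymptot}), your upper bound takes a genuinely different route. The paper notes that the $\rho$-patch at $\bs t$ is contained in $(V_P-\bs t)\cap B_\rho(\bs 0)$ with $P=P(\rho)$ the product of \emph{all} primes $p$ satisfying $p^{nk}\le|\Lambda\cap B_\rho(\bs 0)|$, so that $N(\rho)\le P^{nk}\cdot 2^{\max_{\bs t}|V_P\cap B_\rho(\bs t)|}$; since the number of primes involved grows with $\rho$, capping the exponent by $\rho^n v_n/\zeta(nk)+o(\rho^n)$ in this single coupled limit requires the uniform bound \eqref{ubound} of Corollary~\ref{cor}, and hence the error analysis of Lemma~\ref{VPpatches}, with the periodicity cost $nk\log_2 P=O(\rho^{1/k})$ being harmless. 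You instead freeze a finite prime cutoff $y$, take a union bound over the $Q^{nk}$ tuples of omitted residue classes (where $Q=\prod_{p\le y}p$ is constant in $\rho$), control the admissible sites by elementary means (\eqref{CRT} plus \eqref{LambdaCount} applied to $Q^k\Lambda$, uniformly over tuples), and only then let $y\to\infty$, using $\prod_p(1-p^{-nk})=1/\zeta(nk)$ from \eqref{zetafn}. This decoupled double limit buys elementarity: your upper bound needs nothing beyond \eqref{LambdaCount}, \eqref{CRT} and the trivial direction (ii)$\Rightarrow$(i) of Theorem~\ref{equivalence} (your detour through the exact characterisation of $N(\rho)$ as the number of criterion-(i) subsets uses only that direction; the hard direction (i)$\Rightarrow$(ii) enters, as in the paper, only in the lower bound), and it bypasses \eqref{ubound} entirely. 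The price is quantitative: your argument produces no rate for $\log_2 N(\rho)$, whereas the paper's coupled limit yields the explicit estimate \eqref{upperbd} with error $o(\rho^n)+O(\rho^{1/k})$. Both arguments rest on the same underlying mechanism, namely that patches of $V$ must avoid a residue class modulo $p^k\Lambda$ for each small prime $p$, which caps the density of available sites at $\prod_p(1-p^{-nk})$.
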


\begin{proof}
For each radius $\rho>0$ let $P=P(\rho)$ be the product of the primes
$p$ with $p^{nk}\le|\Lambda\cap B_\rho(\bs0)|$. Then $P$ is divisible
by every prime less than $\log\rho$ when $\rho$ is large enough.

The $\rho$-patch of $V$ at any point $\bs t\in\Lambda$ is a subset of
$(V_P-\bs t)\cap B_\rho(\bs0)$ and by \eqref{ubound} of Corollary~\ref{cor}
the cardinal of this set is at most $\rho^nv_n/\zeta(nk)+o(\rho^n)$.
Also there are at most $P^{nk}$ possibilities for $V_P-\bs t$ as $\bs t$
varies, since $P^k\Lambda$ is the lattice of periods of $V_P$.  So
\begin{eqnarray}\label{upperbd}
\log_2N(\rho)&\le&\frac{\rho^nv_n}{\zeta(nk)}+o(\rho^n)+nk\log_2P\nonumber\\
&\le&\frac{\rho^nv_n}{\zeta(nk)}+o(\rho^n)+O(\rho^{1/k}),
\end{eqnarray}
since $\log_2P=O(\rho^{1/k})$, by \eqref{primeprod}.

To bound $\log_2N(\rho)$ below we note that every subset $\mathcal P$ of
$V\cap B_\rho(\bs0)$ is the \mbox{$\rho$-patch} of $V$ at
some point of $\Lambda$, by Theorem~\ref{equivalence} with
$\mathcal Q=\Lambda\cap B_\rho(\bs0)\setminus\mathcal P$.
By \eqref{asymptot} of Corollary~\ref{cor},
$|V\cap B_\rho(\bs0)|=\rho^nv_n/\zeta(nk)+o(\rho^n)$, so
\begin{equation}\label{lowerbd}
\log_2N(\rho)\ge\frac{\rho^nv_n}{\zeta(nk)}+o(\rho^n).
\end{equation}
On dividing by $\rho^nv_n$ and letting $\rho$ tend to infinity,
\eqref{upperbd} and \eqref{lowerbd} give $h_\T(V)=1/\zeta(nk)$.
\end{proof}

To bound the measure entropy we need the following lemma, which
enables us to obtain good upper bounds for the frequency of ``sparse"
patches of $V$, i.e.\ patches that contain few points in comparison to their size.

\begin{lemma}\label{nubound}
Let $\mathcal P$ and $\mathcal Q$ be disjoint finite subsets of $\Lambda$,
let $Q$ be the product of all primes $p$ with
\begin{equation}\label{Q}
|\mathcal Q/p^k\Lambda|<|\mathcal Q|
\end{equation}
and define
\[s:=\min_{\bs t\in L(V;\mathcal P,\mathcal Q)}|(\mathcal Q+\bs t)\cap V_Q|.\]
Then
\begin{equation}\label{udensity}
\delta(L(V;\mathcal P,\mathcal Q))=
O\bigl(4^{(D(\mathcal Q)/\lambda)^{1/k}nk}/|\mathcal Q|^{s-s/nk}\bigr),
\end{equation}
where the $O$-constant depends only on $\Lambda$.
\end{lemma}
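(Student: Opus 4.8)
The plan is to avoid the signed sum of Theorem~\ref{density} (whose cancellations are awkward to control) and to exploit instead the arithmetic fact that makes $L(V;\mathcal P,\mathcal Q)$ sparse: for $\bs t\in L(V;\mathcal P,\mathcal Q)$ every translate $\bs q+\bs t$ ($\bs q\in\mathcal Q$) lying in $V_Q$ must be pushed out of $V$ by a prime coprime to $Q$, and the ``large'' such primes each remove at most one translate. Set $x:=(D(\mathcal Q)/\lambda)^{1/k}$ and $Q':=\prod_{p\le x}p$. Two distinct points of $\mathcal Q$ are congruent mod $p^k\Lambda$ only if their difference, of norm $\le D(\mathcal Q)$, lies in $p^k\Lambda$ and so has norm $\ge p^k\lambda$; hence every prime $p>x$ keeps the points of $\mathcal Q$ pairwise distinct mod $p^k\Lambda$, and each such prime sends at most one translate into $p^k\Lambda$. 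Note $Q\mid Q'$, and by the Chebyshev-type bound $\prod_{p\le x}p<4^x$ the index $[\Lambda:Q'^k\Lambda]=Q'^{nk}$ is at most $4^{nkx}=4^{nk(D(\mathcal Q)/\lambda)^{1/k}}$.

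First I would split $\Lambda$ into the $Q'^{nk}$ residue classes modulo $Q'^k\Lambda$. This is the device that removes the combinatorial choice which would otherwise wreck the estimate: inside a fixed class $\bs r$ the behaviour of every small prime $p\le x$ is frozen, so the subset $A_{\bs r}\subseteq\mathcal Q$ of translates lying in $V_Q$ but not yet killed by a small prime is \emph{determined}, with no binomial factor for ``which points.'' For $\bs t\in L(V;\mathcal P,\mathcal Q)$ one has $|(\mathcal Q+\bs t)\cap V_Q|\ge s$, and at most $\pi(x)$ of these can be removed by the (at most $\pi(x)$) non-collapsing small primes, so $|A_{\bs r}|\ge s-\pi(x)$ for every contributing class. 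Each point of $A_{\bs r}$ needs its own large prime; for a fixed injection $\phi\colon A_{\bs r}\to\{p>x\}$ the $\bs t$ realizing $\bs q+\bs t\in\phi(\bs q)^k\Lambda$ form, by \eqref{CRT}, a single class of relative density $\prod_{\bs q}\phi(\bs q)^{-nk}$. Summing over injections bounds the relative density of $L$ in the class by $\sigma'^{|A_{\bs r}|}\le\sigma'^{\,s-\pi(x)}$, where $\sigma':=\sum_{p>x}p^{-nk}<1$.

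Summing over the at most $Q'^{nk}$ classes, each weighted by $1/Q'^{nk}$, the class count cancels and leaves $\delta(L(V;\mathcal P,\mathcal Q))\le\sigma'^{\,s-\pi(x)}$. It then remains to recast this in the stated form. The tail estimate $\sigma'\asymp x^{1-nk}/(nk-1)$ together with the packing relation $|\mathcal Q|\lesssim v_n(D(\mathcal Q)/2)^n$ (valid since $\det\Lambda=1$) gives the crucial inequality $\sigma'\le|\mathcal Q|^{-(1-1/nk)}$, whence $\sigma'^{\,s}\le|\mathcal Q|^{-(s-s/nk)}$; the residual factor $\sigma'^{-\pi(x)}$, which is of the same order as the class count $Q'^{nk}$, is absorbed into $4^{nk(D(\mathcal Q)/\lambda)^{1/k}}$ via $\pi(x)\ll x/\log x$ and $\prod_{p\le x}p<4^x$.

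The hard part lies entirely in the constants. The inequality $\sigma'\le|\mathcal Q|^{-(1-1/nk)}$ must hold with implied constant \emph{at most} $1$: any constant $C>1$ here would appear as $C^{s}$, and since $s$ may be as large as $|\mathcal Q|\asymp(D(\mathcal Q)/\lambda)^n$ this could be absorbed neither into the $\Lambda$-dependent $O$-constant nor into $4^{nk(D(\mathcal Q)/\lambda)^{1/k}}$. Securing $C\le1$ forces one to use the \emph{sharp} relation between $|\mathcal Q|$ and $D(\mathcal Q)$ for a determinant-$1$ lattice (extremal for a round cluster), not the crude bound $|\mathcal Q|\le(2D(\mathcal Q)/\lambda+1)^n$; the borderline case is $n=1$, $k=2$. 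The second technical point, already built into the argument, is the reduction modulo $Q'^k$: without it one must \emph{choose} the $s-\pi(x)$ points needing large primes, introducing a factor $\binom{|\mathcal Q|}{s-\pi(x)}$ and an unrecoverable growth of order $|\mathcal Q|^{(s-\pi(x))/nk}$.
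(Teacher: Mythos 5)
Your skeleton --- freeze the behaviour of the small primes by passing to residue classes, force each surviving point of $\mathcal Q+\bs t$ to carry its own prime, apply \eqref{CRT} and sum reciprocals of prime powers, and control the class count via \eqref{primeprod} --- is the same as the paper's, but you have missed the one arithmetic observation that lets the paper close the argument painlessly, and its absence leaves your conversion step genuinely open. By the definition \eqref{Q} of $Q$, any prime $p\nmid Q$ has $|\mathcal Q/p^k\Lambda|=|\mathcal Q|$, and since $\Lambda/p^k\Lambda$ has $p^{nk}$ elements this forces $p\ge|\mathcal Q|^{1/nk}$. The paper therefore works modulo $Q^k\Lambda$ (your finer $Q'$ is unnecessary: $Q^k\Lambda$ is already the period lattice of $V_Q$, so it determines $\mathcal Q\cap(V_Q-\bs t)$), assigns distinct primes $p\nmid Q$ to \emph{all} $t\ge s$ points of $(\mathcal Q+\bs t)\cap V_Q$ --- no small/large split at $x$ --- and bounds the resulting sum by $\bigl(\sum_{m\ge|\mathcal Q|^{1/nk}}m^{-nk}\bigr)^{t}<|\mathcal Q|^{-(s-s/nk)}$, with $D(\mathcal Q)$ entering only through $Q^{nk}\le 4^{nk(D(\mathcal Q)/\lambda)^{1/k}}$. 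Because you instead take the geometric threshold $x=(D(\mathcal Q)/\lambda)^{1/k}$, you must prove $\sigma'=\sum_{p>x}p^{-nk}\le|\mathcal Q|^{-(1-1/nk)}$ with implied constant exactly $1$, and your plan for this does not work as stated: with your own estimate $\sigma'\asymp x^{1-nk}/(nk-1)$ and even the sharp packing relation, the borderline case $n=1$, $k=2$ fails, since there $\sigma'\approx 1/x$ while an extremal $\mathcal Q$ has $|\mathcal Q|^{-1/2}\approx(x^2+1)^{-1/2}<1/x$. What actually rescues that case is the saving coming from primality (the tail over primes is smaller than the tail over integers, by a factor like $1/\log x$), together with a separate trivial argument for bounded $x$ --- none of which is in your text, and all of which the paper's intrinsic threshold $|\mathcal Q|^{1/nk}$ renders unnecessary.

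There is a second gap at the heart of the density estimate: ``summing over injections'' bounds the upper density of $L(V;\mathcal P,\mathcal Q)$ intersected with a class by the sum of the densities of \emph{countably many} cosets, one for each injection into the infinite set of primes $>x$. Upper density is not countably subadditive (every singleton has density $0$, yet $\Lambda$ is a countable union of singletons), so this step is invalid as written. The paper resolves it by truncation: by \eqref{largeP} of Corollary~\ref{cor}, the number of $\bs t\in B_R(\bs 0)$ for which some killing prime is $\ge\log\rho$ (with $\rho=R+\max_{\bs q\in\mathcal Q}\|\bs q\|$) is $o(\rho^n)$, so at each scale only the finitely many assignments of primes $<\log\rho$ need be counted, each via \eqref{LambdaCount}, and the number of such assignments is small enough that the accumulated $O(1)$ error terms stay $o(R^n)$. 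Your argument needs the same device; merely observing that only primes with $p^k\lambda\le R+\max_{\bs q\in\mathcal Q}\|\bs q\|$ are relevant inside $B_R(\bs 0)$ is not enough, since then the number of cosets, hence of error terms, grows like a power of $R$ (this is exactly the defect of Lemma~\ref{VPpatches} that Section~\ref{errorterm} is devoted to repairing). Finally, your argument at best bounds the upper density; that $\delta(L(V;\mathcal P,\mathcal Q))$ exists as a limit must be quoted from Theorem~\ref{density}, as the paper does in its last line.
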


\begin{proof}
If $\bs t\in L(V;\mathcal P,\mathcal Q)$ then for each
$\bs q\in\mathcal Q\cap(V_Q-\bs t)$ there is a prime
$p(\bs q)\nmid Q$ with $\bs q+\bs t\in p(\bs q)^k\Lambda$,
and by the definition of $Q$ these primes are distinct.
By \eqref{largeP} of Corollary~\ref{cor} with
$\rho=R+\max_{\bs q\in\mathcal Q}\|\bs q\|$ and $P$ the product
of the primes less than $\log\rho$, the number of points
$\bs t\in  L(V;\mathcal P,\mathcal Q)\cap B_R(\bs0)$ for which
$p(\bs q)\ge\log\rho$ for some $\bs q\in\mathcal Q\cap(V_Q-\bs t)$ is $o(\rho^n)$.
The remaining $\bs t$'s in $ L(V;\mathcal P,\mathcal Q)\cap B_R(\bs0)$
have $p(\bs q)<\log\rho$ for each $\bs q\in\mathcal Q\cap(V_Q-\bs t)$.
For the number of such $\bs t$ with a given set
$\mathcal Q\cap(V_Q-\bs t)=\{\bs q_1,\ldots,\bs q_t\}$ and a
given ordered set of primes $\{p(\bs q_1),\ldots,p(\bs q_t)\}$,
\eqref{LambdaCount} with $\Lambda$ replaced by
$(p(\bs q_1)\cdots p(\bs q_t))^k\Lambda$, gives the estimate
\[\le\frac{CR^n}{(p(\bs q_1)\cdots p(\bs q_t))^{nk}}\]
when $R$ is large enough to ensure that $R^n>(\log\rho)^{nk|\mathcal Q|}$,
where the constant $C$ depends only on $\Lambda$. The sum of
this over all sets of $t$ primes not dividing $Q$ is majorized by
\[CR^n\Biggl(\sum_{p\nmid Q}\frac{1}{p^{nk}}\Biggr)^t
<CR^n\Biggl(\sum_{m\ge|\mathcal Q|^{1/nk}}\frac{1}{m^{nk}}\Biggr)^t
<\frac{CR^n}{|\mathcal Q|^{s-s/nk}},\]
since $t\ge s$ and the least prime not dividing $Q$ is
$\ge|\mathcal Q|^{1/nk}$, by \eqref{Q}.  There are at most
$Q^{nk}$ possibilities for $\mathcal Q\cap(V_Q-\bs t)$,
since $Q^k\Lambda$ is the lattice of periods of $V_Q$, so
\begin{eqnarray*}
|L(V;\mathcal P,\mathcal Q)\cap B_R(\bs0)|&<&
\frac{Q^{nk}CR^n}{|\mathcal Q|^{s-s/nk}}+o(R^n)\\
&<&\frac{4^{(D(\mathcal Q)/\lambda)^{1/k}nk}CR^n}{|\mathcal Q|^{s-s/nk}}+o(R^n)
\end{eqnarray*}
for large $R$, where the second inequality results from
\eqref{primeprod} and the fact that $p^k\le D(\mathcal Q)/\lambda$
for every prime factor $p$ of $Q$.  The result follows on dividing
by $R^n$ and letting $R$ tend to infinity (the existence of the
limit on the left being guaranteed by Theorem~\ref{density}).
\end{proof}

\begin{theorem}
$h_\M(V)=0$.
\end{theorem}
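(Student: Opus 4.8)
The plan is to show that the bracketed sum in \eqref{met} is $o(\rho^n)$ and then divide by $\rho^nv_n$. Write $L_\rho:=|\Lambda\cap B_\rho(\bs0)|\sim\rho^nv_n$, and for a $\rho$-patch $\mathcal P$ put $\mathcal Q:=(\Lambda\cap B_\rho(\bs0))\setminus\mathcal P$, so that $\nu(\mathcal P)=\delta(L(V;\mathcal P,\mathcal Q))$ and $D(\mathcal Q)\le2\rho$. The crucial structural input is the uniform upper density \eqref{ubound}: since $|\mathcal P|\le\rho^nv_n/\zeta(nk)+o(\rho^n)$ for \emph{every} patch, one gets $|\mathcal Q|\ge c\rho^n$ for a fixed $c>0$ and all large $\rho$. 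In other words, every patch is sparse in the sense of Lemma~\ref{nubound}, and the lemma applies with a uniformly large denominator $|\mathcal Q|^{s-s/nk}\ge(c\rho^n)^{s(1-1/nk)}$.

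I would classify patches by the number $s=s(\mathcal P)$ of their holes that are forced only by ``large'' primes $p>(2\rho/\lambda)^{1/k}$; note that $Q_*:=\prod_{p\le(2\rho/\lambda)^{1/k}}p$ has $\log_2 Q_*^{nk}=O(\rho^{1/k})$ by \eqref{primeprod}. Two complementary estimates drive the proof. First, patches with a given small $s$ are few: such a patch is determined by the residue of its location modulo $Q_*^k\Lambda$ (which fixes every small-prime hole) together with the positions of its at most $s$ large-prime holes, so the number $M_s$ of patches with parameter $s$ satisfies $\log_2 M_s\le nk\log_2 Q_*+\log_2\binom{L_\rho}{s}\le C\rho^{1/k}+s\log_2(eL_\rho/s)$. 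Second, patches with large $s$ are individually rare: inserting $D(\mathcal Q)\le2\rho$ and $|\mathcal Q|\ge c\rho^n$ into Lemma~\ref{nubound} (whose parameter is at least $s$) gives $\nu(\mathcal P)\le2^{O(\rho^{1/k})}(c\rho^n)^{-s(1-1/nk)}$.

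I would then bound the entropy sum by the standard grouping inequality $\sum_{\mathcal P}-\nu\log_2\nu\le\sum_s q_s\log_2 M_s+\sum_s(-q_s\log_2 q_s)$, where $q_s$ is the total frequency of the patches with parameter $s$, cutting the range of $s$ at a threshold $M=\lceil\rho^{1/k}\log\rho\rceil$ and treating $\{s>M\}$ as a single group. The number of groups is $O(\rho^{1/k}\log\rho)$, so the grouping term $\sum_s-q_s\log_2 q_s$ is $O(\log\rho)=o(\rho^n)$. For $s\le M$ the first estimate gives $\log_2 M_s=O(\rho^{1/k}\log^2\rho)$, whence $\sum_{s\le M}q_s\log_2 M_s=O(\rho^{1/k}\log^2\rho)=o(\rho^n)$ because $\sum_sq_s\le1$. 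For the tail, the two estimates combine into $q_s\le M_s\cdot2^{O(\rho^{1/k})}(c\rho^n)^{-s(1-1/nk)}\le2^{O(\rho^{1/k})}A^s/s!$ with $A=L_\rho/(c\rho^n)^{1-1/nk}=\Theta(\rho^{1/k})$; since $M/A\to\infty$, a Poisson-type tail bound makes $q_{>M}=\sum_{s>M}q_s$ decay faster than any power of $\rho$, so that $q_{>M}\log_2 2^{L_\rho}=q_{>M}L_\rho\to0$ and $-q_{>M}\log_2 q_{>M}\to0$. Hence $\sum_{\mathcal P}-\nu\log_2\nu=o(\rho^n)$ and $h_\M(V)=0$.

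The main obstacle is to reconcile the fact that there are about $2^{\rho^nv_n/\zeta(nk)}$ patches (Theorem~\ref{visible}) with the vanishing of the measure entropy. The delicate point is the two-sided balance in the split: the cutoff $M$ must be large enough that the exponentially many sparse patches carry negligible total frequency $q_{>M}$, yet small enough that the patches with $s\le M$ are so few that $\log_2 M_s=o(\rho^n)$. Both can be arranged precisely because the large-prime holes live on the scale $\rho^{1/k}$ rather than $\rho^n$ --- this is the quantitative content of Lemma~\ref{nubound}, and it is usable uniformly over all patches only thanks to the lower bound $|\mathcal Q|\gg\rho^n$ furnished by \eqref{ubound}.
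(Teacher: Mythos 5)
Your proof is correct and is essentially the paper's own argument: the same complement $\mathcal Q$ with $|\mathcal Q|\gg\rho^n$ via \eqref{ubound}, the same sparsity parameter $s$ (holes forced only by large primes), the same frequency bound from Lemma~\ref{nubound}, and the same counting of low-$s$ patches by a residue class modulo $Q^k\Lambda$ together with the positions of at most $s$ holes. The only differences are bookkeeping: you group by the exact value of $s$, cut at $M=\lceil\rho^{1/k}\log\rho\rceil$ and kill the tail with a Poisson-type bound $q_s\le 2^{O(\rho^{1/k})}A^s/s!$, whereas the paper makes a single split at $S=\rho^n/\sqrt{\log_2\rho}$ and bounds the sparse block crudely by $2^{|\Lambda\cap B_\rho(\bs0)|}$ times the maximal term; both routes land on the required $o(\rho^n)$ estimate (yours in fact gives the sharper $O(\rho^{1/k}\log^2\rho)$).
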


\begin{proof}
Given $\rho>0$ and a $\rho$-patch $\mathcal P$ of $V$,
let $\mathcal Q:=(\Lambda\cap B_\rho(\bs 0))\setminus\mathcal P$
and, as in Lemma~\ref{nubound}, define $Q=Q(\mathcal P)$ to be the
product of all primes $p$ with $|\mathcal Q/p^k\Lambda|<|\mathcal Q|$ and
\[s=s(\mathcal P):=\min_{\bs t\in L(V;\mathcal P,\mathcal Q)}|(\mathcal Q+\bs t)\cap V_Q|.\]
By \eqref{ubound} of Corollary~\ref{cor} and the fact that
$V\subset V_P$ with $P$ the product of primes less than $\log\rho$, we have
\begin{equation}\label{setQ}
|\mathcal Q|>
\left(1-\frac{1}{\zeta(nk)}\right)v_n\rho^n-o(\rho^n)>\frac{v_n\rho^n}{2^{nk}}
\end{equation}
for large enough $\rho$.

Now put $S=S(\rho):=\rho^n/\sqrt{\log_2\rho}$.
We shall calculate separately the contributions to the measure entropy $h_\M$
of the $\rho$-patches $\mathcal P$ of $V$ with $s(\mathcal P)\ge S$ and those with $s(\mathcal P)< S$.
The former patches have small frequency and the latter are few in number.

For the $\rho$-patches with $s(\mathcal P)\ge S$, Lemma~\ref{nubound}
and \eqref{setQ} give
\[-\log_2\nu(\mathcal P)>\frac{n}{2}S\log_2\rho-O\bigl(D(\mathcal Q)^{1/k}\bigr)
=\frac{n}{2}S\log_2\rho-O\bigl(\rho^{1/k}\bigr)>\frac{S\log_2\rho}{3}\]
for large enough $\rho$ which, since $-\log_2\nu$ is decreasing but
$-\nu\log_2\nu$ is increasing for $\nu\in(0,1/e]$, gives the estimate
\[-\nu(\mathcal P)\log_2\nu(\mathcal P)=O(2^{-S\log_2\rho/3}S\log_2\rho).\]
Since there are at most $2^{|\Lambda\cap B_\rho(\bs0)|}$ $\rho$-patches
in all, the contribution of the $\rho$-patches $\mathcal P$ with $s(\mathcal P)\ge S$
to the sum on the right of \eqref{met} is
\begin{equation}
O\bigl(2^{|\Lambda\cap B_\rho(\bs0)|-(S\log_2\rho)/3}S\log_2\rho\bigr)=
O\Bigl(2^{-\frac{\rho^n}{4}\sqrt{\log_2\rho}}\rho^n\sqrt{\log_2\rho}\Bigr)
=o(1).\label{larges}
\end{equation}

Turning to the $\rho$-patches with $s(\mathcal P)<S$, denote this set of patches
by $\mathcal B\subset\mathcal A(\rho)$ and let $F$ be their combined frequency.
The contribution of these patches to the sum on the right of \eqref{met} is
\[\sum_{\mathcal P\in\mathcal B}-\nu(\mathcal P)\log_2\nu(\mathcal P)\]
which, since $\nu\log_2\nu$ is a convex function of $\nu$, does not decrease
if we replace the $\nu(\mathcal P)$'s by their average value, $F/|\mathcal B|$.
So this contribution is
\begin{equation}\label{Bbound}
\le F\log_2|\mathcal B|-F\log_2F\le\log_2|\mathcal B|+\frac{\log_2e}{e}.
\end{equation}
To bound $|\mathcal B|$ we note that if $\mathcal P\in\mathcal B$
then there is a $\bs t\in L(V;\mathcal P,\mathcal Q)$ with $|\mathcal Q\cap(V_Q-\bs t)|<S$.
Since $\mathcal P\subset V-\bs t\subset V_Q-\bs t$,
$\mathcal Q=(\Lambda\cap B_\rho(\bs0))\setminus\mathcal P$, and
$Q^k\Lambda$ is the lattice of periods of $V_Q$, $\mathcal P$
and $\mathcal Q$ are completely determined by this subset of
$B_\rho(\bs0)$ and by $\bs t$ modulo $Q^k\Lambda$.  There are $Q^{nk}$
cosets of $Q^k\Lambda$ in $\Lambda$ and the number of subsets of
$\Lambda\cap B_\rho(\bs0)$ with fewer than $S$ members is bounded above by
\[\sum_{i=0}^{\lfloor S\rfloor}\binom{|\Lambda\cap B_\rho(\bs0)|}{i}
\le(\lfloor S\rfloor+1)\binom{|\Lambda\cap B_\rho(\bs0)|}{\lfloor S\rfloor}
\le2S\left(\frac{e|\Lambda\cap B_\rho(\bs0)|}{S}\right)^{\!\!S}\]
for large $\rho$, by \eqref{binomial}. Hence the bound
on the right of \eqref{Bbound} is majorized by
\begin{eqnarray}
\lefteqn{S\log_2(e|\Lambda\cap B_\rho(\bs0)|/S)+\log_22eS+nk\log_2Q}\nonumber\\[1mm]
&&=O\left(\frac{\rho^n\log_2\log_2\rho}{\sqrt{\log_2\rho}}\right)+O(\log_2\rho)+O(\rho^{1/k})
=o(\rho^n).\label{smalls}
\end{eqnarray}

Since the contributions \eqref{larges} and \eqref{smalls} are both
$o(\rho^n)$, $h_\M(V)=0$.
\end{proof}

\noindent{\bf Note on patch shapes.}\quad
On the general principle of the isotropy of space, we have used
spherical patches throughout and measured densities and frequencies
through expanding spherical regions; but the results we obtain are
independent of the shapes of these patches and regions: all our
point-counting estimates stem from \eqref{LambdaCount} which remains
valid for an arbitrary expanding region in place of the expanding
ball, with main term the volume of the region (using the volume
of the fundamental region of the lattice as a unit) and an error
term of smaller order provided the boundary of the region has
\mbox{$n$-dimensional} measure zero.  It is not even necessary
for the shape of the density-defining regions to be the same as
the (also expanding) patch shape.

\section{Variational principle}\label{variational}

Endowing the power set $\{0,1\}^{\Lambda}$ of the lattice $\Lambda$ with the
product topology of the discrete topology on $\{0,1\}$, it becomes a
compact topological space (by Tychonov's theorem). This
topology is in fact generated  by the metric $d$ defined by
$$
d(X,Y):=\min\left\{1,\inf\{\epsilon >0\,\mid\, X\cap
  B_{1/\epsilon}(\bs 0)=Y\cap
  B_{1/\epsilon}(\bs 0)\}\right\}
$$
for subsets $X,Y$ of $\Lambda$. Then $(\{0,1\}^{\Lambda},\Lambda)$ is
a \emph{topological dynamical system}, i.e.\ the natural translational
action of the group $\Lambda$
on $\{0,1\}^{\Lambda}$ is continuous. 

Now let $X$ be a subset of
$\Lambda$. The closure $\mathbb X(X)$ of the set of 
lattice translations $\bs t+X$ ($\bs t\in\Lambda$) of $X$ in
$\{0,1\}^{\Lambda}$ gives rise to the topological dynamical system
$(\mathbb X(X),\Lambda)$, i.e.\ $\mathbb X(X)$ is a compact topological space on which
the action of $\Lambda$ is continuous; cf.~\cite{BLR} and references therein for
details. Denote by $\mathcal{M}(\mathbb X(X),\Lambda)$ the set of $\Lambda$-invariant
probability measures on $\mathbb X(X)$ with respect to the Borel $\sigma$-algebra on $\mathbb X(X)$, i.e.\ the smallest 
$\sigma$-algebra on $\mathbb X(X)$ which contains the open subsets of
$\mathbb X(X)$. For a fixed such measure
$\mu$ and a radius $\rho>0$, let $h_\rho(\mu)$ be the entropy of $\mu$
restricted to $\mathcal{A}(\rho)$, i.e.\ 
$$h_\rho(\mu):=\sum_{\mathcal{P}\in\mathcal{A}(\rho)}-\mu(C_\mathcal{P})\log_2
\mu(C_\mathcal{P}),$$
where $\mathcal A(\rho)$ denotes the set of $\rho$-patches of $X$ and
$C_\mathcal{P}$ is the set of elements of $\mathbb X(X)$ whose $\rho$-patch at
$\bs 0$ is $\mathcal P$, the so-called cylinder set with respect
to $\mathcal P$. The \emph{metric entropy} of $\mu$ is then given by the limit 
$$h(\mu):=\lim_{\rho\to\infty}\frac{h_\rho(\mu)}{\rho^nv_n}$$
which exists by a subadditivity argument; cf.~\cite{BS} and also
see~\cite{ELW,Keller,Walters}. As in Section~\ref{defns}, replacing the $\mu(C_\mathcal{P})$'s by
their average value, $1/N(\rho)$, we see that
\[
h(\mu)\le h_\T(X)\quad
\forall \mu\in\mathcal M(\mathbb X(X),\Lambda).
\]
Since the
topological entropy $h_{\rm top}(\mathbb X(X),\Lambda)$ of $(\mathbb X(X),\Lambda)$
coincides with  $h_\T(X)$ by~\cite[Theorem 1 and Remark 2]{BLR}, the variational
principle for lattice actions on compact spaces here reads as follows; cf.~\cite{BS}
and~\cite[Sect.~6]{Ruelle}, the latter being an
extension of the case $n=1$ from~\cite{DGS,Walters}. An elementary
proof can be found in~\cite{Mis}. Note that the additional statement
follows from the expansiveness of the action of $\Lambda$ on $\mathbb X(X)$.

\begin{theorem}[Variational principle] 
$$
\sup_{\mu\in \mathcal{M}(\mathbb X(X),\Lambda)} h(\mu)=h_\T(X).
$$
Moreover, the supremum is achieved at some measure.
\qed
\end{theorem}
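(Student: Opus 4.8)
The plan is to read the statement as the classical variational principle for the $\mathbb Z^n$-action given by the translation action of $\Lambda$, supplying the two ingredients the preceding discussion has not yet combined. Since $\Lambda$ is free abelian of rank $n$, the action of $\Lambda$ on the compact metrizable space $\mathbb X(X)$ is a continuous action of $\mathbb Z^n$, and the balls $\Lambda\cap B_R(\bs0)$ form a F\o{}lner sequence for this amenable group (their boundaries grow like $R^{n-1}$ while their cardinalities grow like $R^n$). Thus the variational principle for amenable group actions in the form of~\cite{BS} and~\cite[Sect.~6]{Ruelle} applies, generalizing the classical case $n=1$ of~\cite{DGS,Walters}.

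One inequality is already in hand: the averaging argument above (replacing each $\mu(C_{\mathcal P})$ by $1/N(\rho)$ and using convexity of $t\log_2 t$) gives $h(\mu)\le h_\T(X)$ for every invariant $\mu$, whence $\sup_\mu h(\mu)\le h_\T(X)$ once we invoke $h_{\rm top}(\mathbb X(X),\Lambda)=h_\T(X)$ from~\cite[Theorem 1 and Remark 2]{BLR}. The reverse inequality is the substantive half, which I would prove by Misiurewicz's elementary method~\cite{Mis}. For each radius $\rho$ one places normalized counting measure on a collection of points of $\mathbb X(X)$ realizing all $N(\rho)$ distinct $\rho$-patches, Ces\`aro-averages this measure over the F\o{}lner balls $\Lambda\cap B_R(\bs0)$, and extracts a weak-$*$ limit $\nu\in\mathcal M(\mathbb X(X),\Lambda)$; invariance of $\nu$ is automatic from the averaging. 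The crucial simplification here is that each cylinder set $C_{\mathcal P}$ is clopen, being determined by the finitely many coordinates inside $B_\rho(\bs0)$, so its topological boundary is empty and the ``boundary of measure zero'' hypothesis in Misiurewicz's partition estimate holds trivially; a routine counting estimate then yields $h(\nu)\ge h_\T(X)$, giving equality.

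For the final assertion I would argue by expansiveness. If $X'\ne Y'$ in $\mathbb X(X)$, then they differ at some lattice point $\bs l$; translating by $-\bs l$ produces two configurations disagreeing at $\bs0$, and any two subsets of $\Lambda$ disagreeing at $\bs0$ have $d$-distance $1$, since they fail to agree on every ball $B_{1/\epsilon}(\bs0)$. Hence the action is expansive, with any $c\in(0,1)$ serving as expansive constant. For an expansive action the entropy map $\mu\mapsto h(\mu)$ is upper semicontinuous on the weak-$*$ compact convex set $\mathcal M(\mathbb X(X),\Lambda)$, and an upper semicontinuous function on a compact space attains its maximum; a maximizing measure then realizes the supremum.

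The step I expect to be the main obstacle is the reverse inequality $\sup_\mu h(\mu)\ge h_\T(X)$: transporting Misiurewicz's construction from a single transformation to the $\mathbb Z^n$-action of $\Lambda$ requires care in the choice of F\o{}lner averaging sets and in the associated combinatorial entropy estimate, even though the clopen cylinder structure removes the partition-boundary subtleties that complicate the general topological case.
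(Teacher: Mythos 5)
Your proposal is correct and follows essentially the same route as the paper: the paper proves this theorem precisely by invoking the variational principle for lattice ($\mathbb Z^n$-) actions on compact spaces from \cite{BS} and \cite[Sect.~6]{Ruelle} (with the elementary proof of \cite{Mis}), combining it with the identification $h_{\rm top}(\mathbb X(X),\Lambda)=h_\T(X)$ from \cite[Theorem 1 and Remark 2]{BLR}, and deducing the existence of a maximizing measure from expansiveness of the $\Lambda$-action. The details you supply---the F\o{}lner structure of the balls $\Lambda\cap B_R(\bs 0)$, the clopen-cylinder simplification in Misiurewicz's argument, and the explicit expansiveness and upper-semicontinuity reasoning---are exactly the ingredients the paper leaves implicit in its citations.
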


In case of $V$, $\mathbb X(V)$ will also contain the empty set
(cf.~Proposition~\ref{holes}) and various other subsets of
$\Lambda$ and thus admits
many $\Lambda$-invariant probability measures. In fact, we shall now
show that $\mathbb X(V)$ coincides with the set of \emph{admissible} subsets $A$
of $\Lambda$, i.e.\ subsets $A$ of $\Lambda$ having the property that
every finite subset $\mathcal P$ of $A$ satisfies criterion (i) of
Theorem~\ref{equivalence}; compare~\cite[Theorem 8(i)]{Sarnak}. We
denote the set of all admissible subsets of
$\Lambda$ by
$\mathbb A$.

\begin{theorem}
$\mathbb X(V)=\mathbb A$.
\end{theorem}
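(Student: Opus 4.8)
The plan is to prove the two inclusions $\mathbb X(V)\subseteq\mathbb A$ and $\mathbb A\subseteq\mathbb X(V)$ separately, with Theorem~\ref{equivalence} doing the main work in each direction.

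For $\mathbb X(V)\subseteq\mathbb A$, I would first observe that $\mathbb A$ is $\Lambda$-invariant and contains $V$, so that the entire $\Lambda$-orbit of $V$ lies in $\mathbb A$, and then show that $\mathbb A$ is closed, whence its closure (which contains $\mathbb X(V)$) is again $\mathbb A$. The invariance is immediate, because translation by a lattice vector $\bs t$ permutes the cosets of $p^k\Lambda$ bijectively, so that $|(\mathcal P+\bs t)/p^k\Lambda|=|\mathcal P/p^k\Lambda|$; thus criterion (i) of Theorem~\ref{equivalence} is insensitive to lattice translation, and admissibility passes from $V$ to every $\bs t+V$. That $V$ itself is admissible follows since any finite $\mathcal P\subset V$ has $\bs 0\in L(V;\mathcal P,\emptyset)$, so $L(V;\mathcal P,\emptyset)\neq\emptyset$ and Theorem~\ref{equivalence} forces (i). For closedness I would exploit that the topology is metric: if $A\notin\mathbb A$, some finite $\mathcal P\subset A$ violates (i), i.e.\ represents all $p^{nk}$ cosets of $p^k\Lambda$ for some prime $p$; since $\mathcal P$ lies in some ball $B_\rho(\bs 0)$, every $A'$ agreeing with $A$ on $B_\rho(\bs 0)$ also contains $\mathcal P$ and is therefore non-admissible, so the complement of $\mathbb A$ is open.

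For the reverse inclusion $\mathbb A\subseteq\mathbb X(V)$, fix an admissible $A$ and a radius $\rho>0$, and set $\mathcal P:=A\cap B_\rho(\bs 0)$ and $\mathcal Q:=(\Lambda\cap B_\rho(\bs 0))\setminus\mathcal P$. These are disjoint finite sets, and $\mathcal P\subset A$ satisfies criterion (i) by admissibility; hence by the implication (i)$\Rightarrow$(ii) of Theorem~\ref{equivalence} the locator set $L(V;\mathcal P,\mathcal Q)$ is non-empty. Any $\bs t$ in it yields a lattice translate $-\bs t+V$ whose intersection with $B_\rho(\bs 0)$ equals $A\cap B_\rho(\bs 0)$, so $d(-\bs t+V,A)\le 1/\rho$. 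Letting $\rho\to\infty$ exhibits $A$ as a limit of lattice translates of $V$, i.e.\ $A\in\mathbb X(V)$. The crucial input here is exactly the feature emphasized just after Theorem~\ref{equivalence}: criterion (i), and hence solvability of the locator problem, depends only on $\mathcal P$ and not on the prescribed ``holes'' $\mathcal Q$.

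The genuinely substantive step is this reverse inclusion, and its entire weight rests on Theorem~\ref{equivalence}; the first inclusion is essentially formal once one notes the translation-invariance of $|\mathcal P/p^k\Lambda|$ and the metrizability of the ambient space. The only points demanding care are the bookkeeping with signs in passing between $\bs t+V$ and membership in $L(V;\mathcal P,\mathcal Q)$, and the elementary observation that agreement of two subsets on $B_\rho(\bs 0)$ bounds their distance by $1/\rho$ in the given metric.
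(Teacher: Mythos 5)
Your proof is correct and follows essentially the same route as the paper: both directions rest on Theorem~\ref{equivalence}, with $\mathbb X(V)\subseteq\mathbb A$ obtained from $V\in\mathbb A$ plus closedness and $\Lambda$-invariance of $\mathbb A$, and $\mathbb A\subseteq\mathbb X(V)$ obtained by applying (i)$\Rightarrow$(ii) to $A\cap B_\rho(\bs 0)$ and letting $\rho\to\infty$. The only difference is that you spell out the closedness, invariance, and metric-convergence details that the paper leaves implicit.
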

\begin{proof}
Since $V\in\mathbb A$ (otherwise some point of $V$ is in $p^k\Lambda$
for some prime $p$, a contradiction) and since $\mathbb A$ is a $\Lambda$-invariant and
closed subset of $\{0,1\}^{\Lambda}$, it follows that $\mathbb A$ contains $\mathbb X(V)$. For the
other inclusion, let $A\in\mathbb A$. Then, for any $\rho >0$,
Theorem~\ref{equivalence} applied to the finite subset
$A_\rho=A\cap B_\rho(\bs 0)$ of $A$ implies the existence of a $t_\rho\in L(V;A_\rho,\Lambda\cap B_\rho(\bs 0)\setminus A_\rho)$. It follows that $A\in\mathbb X(V)$.
\end{proof}

  Moreover, one has 
$h_\T(V)=1/\zeta(nk)$ by Theorem~\ref{visible}. Consider the frequency function $\nu$ from above which gives the
frequencies $\nu(\mathcal P)$ of occurence of $\rho$-patches $\mathcal
P$  
 of $V$ in space. The function $\nu$, regarded as a function on the
cylinder sets by setting $\nu(C_\mathcal
P):=\nu(\mathcal P)$, is finitely additive on the
cylinder sets with $\nu(\mathbb X(V))=\sum_{\mathcal P\in\mathcal
  A(\rho)}\nu(C_{\mathcal P})=1$. Since the
family of cylinder sets is a (countable) semi-algebra that generates the
Borel $\sigma$-algebra on $\mathbb X(V)$, one can use the method
from~\cite[\S 0.2]{Walters} to show that $\nu$ extends uniquely to a probability measure on
$\mathbb X(V)$. Moreover, this probability measure can be
seen to be $\Lambda$-invariant. This shows that the
measure entropy $h_\M(V)$ is indeed a metric entropy
of a $\Lambda$-invariant probability measure on $\mathbb X(V)$. Certainly, an explicit characterisation of $\mathcal M(\mathbb X(V),\Lambda)$
together with the corresponding metric entropies (in particular
those measures $\mu\in \mathcal{M}(\mathbb X(V),\Lambda)$ with maximal entropy,
i.e.\ $h(\mu)=1/\zeta(nk)$) would be desirable (but not simple).

\section{Diffraction spectrum}\label{diffraction}

In the following, we assume that the reader is acquainted with the
mathematics of diffraction as carefully laid out in~\cite{BMP}; see
also~\cite{BG0} and references therein for a review. We
shall 
also use the notation and results from that text. In fact, the proofs
presented below are straightforward modifications of the corresponding
proofs in~\cite{BMP} and are only included for the reader's convenience. For an alternative derivation of the diffraction
spectrum in case of the visible lattice points,
see~\cite[Sect.~5a]{Sing}. 

A Dirichlet series we shall encounter below is
\begin{equation}\label{xi}
\xi(s):=\sum_{m=1}^{\infty}\frac{\mu(m)\tau(m)}{m^s}=\prod_p\left(1-\frac{2}{p^s}\right)\,,
\end{equation}
which is absolutely convergent for $\Re(s)>1$, where $\tau$
is the ordinary divisor function in \eqref{taubound}.

\begin{theorem}\label{auto}
The natural autocorrelation of $V$ exists and is supported on
$\Lambda$, the weight of a point $\bs a\in\Lambda$ in the
autocorrelation of $V$ being given by
$$
w(\bs a)=\xi(nk)\prod_{p\mid c_k(\bs a)}\left(1+\frac{1}{p^{nk}-2}\right)\,,
$$
with error term equal to $O(R^{-(1-(1/k))^2})$ for $n=1$ and $k\geq
2$, $O(R^{-1/2})$ for $n=2$ and $k=1$ and $O(R^{-1})$ otherwise,
where, in any case, the implied constant depends on $\bs
a$ as well as on $\Lambda$. (For lattices $\Lambda$ with determinant $\neq 1$ the weights above must be divided by\/ $\det(\Lambda)$.)
\end{theorem}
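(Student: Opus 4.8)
The plan is to reduce the computation of the autocorrelation coefficients to a single instance of Theorem~\ref{density}, then perform an elementary arithmetic simplification, leaving the error analysis for last since it is the only genuinely delicate part.

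First I would recall from \cite{BMP} that the natural autocorrelation of $V$ is the positive, pure-point measure $\gamma=\sum_{\bs a\in\Lambda}w(\bs a)\,\delta_{\bs a}$, whose coefficient at $\bs a$ is
\[
w(\bs a)=\lim_{R\to\infty}\frac{|V\cap(V-\bs a)\cap B_R(\bs 0)|}{R^nv_n}
=\delta\bigl(L(V;\{\bs 0,\bs a\},\emptyset)\bigr),
\]
the density of those $\bs t\in\Lambda$ with both $\bs t$ and $\bs t+\bs a$ in $V$. The support lies in $\Lambda$ because $V\subset\Lambda$, and the mere existence of each $w(\bs a)$---hence of $\gamma$, together with its translation boundedness, which follows from the uniform bound $w(\bs a)\le w(\bs 0)$ on a uniformly discrete support---is immediate from Theorem~\ref{density} applied with $\mathcal P=\{\bs 0,\bs a\}$ and $\mathcal Q=\emptyset$.

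That same application of Theorem~\ref{density} gives the closed form
\[
w(\bs a)=\prod_p\Bigl(1-\frac{|\{\bs 0,\bs a\}/p^k\Lambda|}{p^{nk}}\Bigr).
\]
Here the local factor is entirely elementary: $\bs 0$ and $\bs a$ lie in the same coset of $p^k\Lambda$ exactly when $\bs a\in p^k\Lambda$, i.e.\ when $p\mid c_k(\bs a)$, so $|\{\bs 0,\bs a\}/p^k\Lambda|$ equals $1$ for $p\mid c_k(\bs a)$ and $2$ otherwise. Splitting the product accordingly and factoring out $\xi(nk)=\prod_p(1-2/p^{nk})$ from \eqref{xi}, the remaining primes contribute
\[
\frac{1-p^{-nk}}{1-2p^{-nk}}=\frac{p^{nk}-1}{p^{nk}-2}=1+\frac{1}{p^{nk}-2},
\]
which yields the stated formula. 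As a consistency check, taking $\bs a=\bs 0$ makes the product run over all primes (recall $c_k(\bs 0)=\infty$) and collapses to $\prod_p(1-p^{-nk})=1/\zeta(nk)=\delta(V)$, exactly as it must.

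The crux, and the only part requiring care, is the error term, for which the logarithmic bound coming directly out of the proof of Theorem~\ref{density} is far too weak. Here I would instead feed into the inclusion-exclusion the sharpened form of Lemma~\ref{VPpatches} from Section~\ref{errorterm}, specialized to $r=|\mathcal P|=2$, and approximate $V$ by $V_P$ with $P$ the product of the primes below a threshold $z=z(R)$ chosen to balance two competing contributions: the replacement loss $|(V_P\setminus V)\cap B_R(\bs 0)|=O\bigl(R^n\sum_{p>z}p^{-nk}\bigr)$ controlled by \eqref{largeP}, against the divisor-sum error produced by the truncated M\"obius expansion. I expect three regimes to emerge, matching the three cases of the statement: for $n\ge 2$ away from the borderline the genuine boundary term $O(R^{n-1})$ dominates and gives relative error $O(R^{-1})$; for $n=2,\,k=1$ the series $\sum d^{-(n-1)k}$ diverges and, estimated through \eqref{sum1/d}, costs an extra half power, giving $O(R^{-1/2})$; and for $n=1$, where no boundary term is present but the component $R^{1/k}$ of \eqref{error} interacts with the tail, optimizing $z$ is expected to produce the exponent $(1-1/k)^2$. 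The dependence of the implied constant on $\bs a$ enters through $D(\{\bs 0,\bs a\})=\|\bs a\|$ and through the finitely many primes dividing $c_k(\bs a)$, and the whole argument is, as advertised, a transcription of the corresponding estimates in \cite{BMP} to the present lattice setting.
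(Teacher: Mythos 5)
Your derivation of the coefficient formula is correct, and it is cleaner than the paper's own route: you identify $w(\bs a)$ with $\delta(L(V;\{\bs 0,\bs a\},\emptyset))$, quote Theorem~\ref{density} with $\mathcal Q=\emptyset$, observe that $|\{\bs 0,\bs a\}/p^k\Lambda|$ is $1$ or $2$ according as $p\mid c_k(\bs a)$ or not, and factor out $\xi(nk)$. The paper does not argue this way; it restricts at the outset to $n,k\ge 2$ (citing \cite{BMP}, Theorems 1, 2 and 4, for the cases $n=1$, $k\ge 2$ and $n=2$, $k=1$) and re-derives the product from scratch by the exact double M\"obius expansion over $l\mid c_k(\bs x)$, $m\mid c_k(\bs x-\bs a)$, collecting terms with $d=(l,m)$. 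The reason it must do so is precisely the point you flag: Theorem~\ref{density} as proved carries only the error $O(R^n/(\log R)^{nk-1})$, far too weak for the statement.

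The genuine gap is in your error analysis, which is the part of the theorem that carries the three exponents, and your proposed machinery cannot produce them. For $n,k\ge2$ the rate $O(R^{-1})$ comes from the \emph{untruncated} expansion: since $k(n-1)\ge2$, the boundary-error series $\sum[d_1,d_2]^{-k(n-1)}$ converges and the $O(1)$-per-term errors total $O(R^{2/k})=O(R^{n-1})$, so there is nothing to balance and no $V_P$-approximation is needed; any route through \eqref{largeP} or through Theorem~\ref{VPpatches2} as a black box gives at best $O(R^{-1+2/(nk+1)+\epsilon})$, which is strictly weaker than claimed. Worse, for the visible points ($n=2$, $k=1$) none of the tools in this paper reach $O(R^{-1/2})$: the exact expansion fails outright there (the $O(1)$ errors total $O(R^{2})$, the size of the main term); the $V_P$-truncation forces $z=O(\log R\log\log R)$ because the number of divisor pairs grows like $4^{\pi(z)}$, yielding only logarithmic savings; and Theorem~\ref{VPpatches2} yields $O(R^{-1/3+\epsilon})$. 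That case genuinely requires the dedicated argument of \cite{BMP}, which is why the paper imports it by reference instead of re-deriving it. Only for $n=1$, $k\ge2$ can a truncation--balancing argument of the kind you sketch be made to deliver the stated exponent $(1-1/k)^2$ (truncating the M\"obius variable at $z\approx R^{(k-1)/k^2}$, with the tail controlled by a Mirsky-type argument as in Lemma~\ref{freelemma}), but you have not carried it out---``I expect three regimes to emerge'' and ``optimizing $z$ is expected to produce the exponent'' are hopes, not proofs---so in two of the three cases the claimed error terms remain unestablished, and in the remaining case they are asserted rather than proved.
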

\begin{proof}
Since the cases $n=1$, $k\geq
2$ and $n\geq 2$, $k=1$ were already treated in~\cite[Theorems 1, 2 and
4]{BMP}, we may assume that $n,k\geq 2$. Since $V-V\subset\Lambda$, the autocorrelation of $V$ (if it exists)
can only be supported on $\Lambda$. The weight of a point $\bs a\in\Lambda$ in
the autocorrelation of $V$ is the limit as $R\to\infty$
of
\begin{equation}\label{weights}
\frac{1}{R^nv_n}\sum_{\bs x,\bs x-\bs a\in V\cap B_R(\bs 0)} 1
\end{equation}
and, by~\cite[Lemma 1]{BMP}, the existence of this limit for each
$\bs a \in\Lambda$ is sufficient to ensure the existence of the
autocorrelation.

It is convenient to drop the condition $\bs x-\bs a\in B_R(\bs 0)$ in~\eqref{weights}, which then becomes
\begin{equation}\label{weightsmod}
\frac{1}{R^nv_n}\sum_{\substack{\bs x,\bs
    x-\bs a\in V\\[.5mm]\bs x\in B_R(\bs 0)}} 1\,.
\end{equation}
The difference between these sums is $O(1/R)$ by~\eqref{LambdaCount}, due to the extra
lattice points $\bs x$ within a constant distance $\|\bs a\|$ of
the boundary of $B_R(\bs 0)$ that are included in the
latter. By~\eqref{Mobius}, this can be written as
\[
\frac{1}{R^nv_n}\sum_{\bs x\in\Lambda\cap B_R(\bs 0)\setminus\{\bs 0,\bs
    a\}}\hspace{1mm}\sum_{l\mid c_k(\bs
  x)}\mu(l)\hspace{1mm}\sum_{m\mid c_k(\bs x-\bs a)}\mu(m)\,.
\]
Reversing the order of summation gives
\[\frac{1}{R^nv_n}
\sum_{1\leq l<S^{\frac{1}{k}}}\sum_{1\leq
  m<S^{\frac{1}{k}}}\mu(l)\mu(m)\sum_{\substack{\bs x\in\Lambda\cap
    B_R(\bs 0)\setminus\{\bs 0,\bs
    a\}\\[.5mm]\bs x\in l^k\Lambda\\[.5mm]\bs
x-\bs a\in m^k\Lambda}}1\,,
\]
where $S:=(R+\|\bs a\|)/\lambda$. Collecting terms with the
same value of $d=(l,m)$, noting that all $\bs x$ in the inmost sum
belong to $d^k\Lambda$ and that there is no such $\bs x$ unless $\bs
a\in d^k\Lambda$, and putting $l':=l/d$, $m':=m/d$, $\bs x':=\bs x/d^k$,
$\bs a':=\bs a/d^k$, we obtain
\begin{equation}\label{weightsmod2}
\frac{1}{R^nv_n}
\sum_{d\mid c_k(\bs a)}\sum_{1\leq l'<S^{\frac{1}{k}}/d}\hspace{1mm}\sum_{\substack{1\leq
  m'<S^{\frac{1}{k}}/d\\[.5mm](l',m')=1}}\mu(l'd)\mu(m'd)\sum_{\substack{\bs x'\in\Lambda\cap
    B_{R/d^k}(\bs 0)\setminus\{\bs 0,\bs
    a'\}\\[.5mm]\bs x'\in l'^k\Lambda\\[.5mm]\bs
x'-\bs a'\in m'^k\Lambda}}1\,.
\end{equation}
Since $l'$ and $m'$ are bound variables of summation and $\bs x'$ and
$\bs a'$ will not be referred to again, we can drop the dashes: from
now on $l$ and $m$ are the new $l'$ and $m'$ but $\bs a$ is the
original $\bs a$.

By~\eqref{LambdaCount} with $\Lambda$ replaced by $(lm)^k\Lambda$, the
inmost sum is
\[
v_n\left(\frac{R}{(dlm)^k}\right)^n+O\left(\frac{R}{(dlm)^k}\right)^{n-1}+O(1)\,.
\]
These three terms give a main term and two error terms
in~\eqref{weightsmod2}. 

The first error term is majorized by 
\[
O\left(\frac{1}{R}\sum_{1\leq l<S^{\frac{1}{k}}}\frac{1}{l^{k (n-1)}}\sum_{1\leq m<S^{\frac{1}{k}}}\frac{1}{m^{k(n-1)}}\right)=O(1/R)
\]
since the sums are convergent due to $k(n-1)\geq 2$.

The second error term is majorized by
\[
O\left(\frac{S^{2/k}}{R^n}\right)=O\left(\frac{1}{R^{n-2/k}}\right)=O(1/R)
\] 
since $S=O(R)$ and $k(n-1)\geq 2$. So both error terms are $O(1/R)$
and thus tend to $0$ as $R\to\infty$.

The main term is
\begin{eqnarray*}
&&\sum_{d\mid c_k(\bs a)}\sum_{1\leq l<S^{\frac{1}{k}}/d}\hspace{1mm}\sum_{\substack{1\leq
  m<S^{\frac{1}{k}}/d\\[.5mm](l,m)=1}}\frac{\mu(ld)\mu(md)}{(dlm)^{nk}}\\&=&\sum_{d\mid c_k(\bs a)}\sum_{\substack{1\leq l<S^{\frac{1}{k}}/d\\[.5mm](l,d)=1}}\hspace{1mm}\sum_{\substack{1\leq
  m<S^{\frac{1}{k}}/d\\[.5mm](m,d)=1\\[.5mm](l,m)=1}}\frac{\mu(ld)\mu(md)}{(dlm)^{nk}}\\
&=&
\sum_{d\mid c_k(\bs a)}\frac{\mu^2(d)}{d^{nk}}\sum_{\substack{1\leq l<S^{\frac{1}{k}}/d\\[.5mm](l,d)=1}}\hspace{1mm}\sum_{\substack{1\leq
  m<S^{\frac{1}{k}}/d\\[.5mm](m,d)=1}}\frac{\mu(lm)}{(lm)^{nk}}
\end{eqnarray*}
since $\mu$ is multiplicative and $\mu(lm)=0$ when $(l,m)\neq 1$. Since the last double sum is absolutely convergent, this converges to
\begin{equation}\label{weightsmain}
\sum_{d\mid c_k(\bs a)}\frac{\mu^2(d)}{d^{nk}}\sum_{\substack{r=1\\[.5mm](r,d)=1}}^{\infty}\frac{\mu(r)\tau(r)}{r^{nk}}
\end{equation}
as $R\to \infty$. The difference between this limit and the partial
sum above is $O(1/R^{nk-1})$, so falls within the error estimate
$O(1/R)$. 

Using~\eqref{xi}, the expression for the limit~\eqref{weightsmain} can
be rearranged as
\begin{eqnarray*}
&&\sum_{\substack{d\mid c_k(\bs a)\\[.5mm]\text{\scriptsize $d$
      squarefree}}} \frac{1}{d^{nk}}\prod_{p\nmid
  d}\left(1-\frac{2}{p^{nk}}\right)\\
&=&\xi(nk)\sum_{\substack{d\mid c_k(\bs a)\\[.5mm]\text{\scriptsize $d$
      squarefree}}} \frac{1}{d^{nk}}\prod_{p\mid d}\left
  (1-\frac{2}{p^{nk}}\right )^{-1} \\
&=&\xi(nk)\prod_{p\mid c_k(\bs a)}\left(1+\frac{1}{p^{nk}}\left(1-\frac{2}{p^{nk}}\right)^{-1}\right)\\
&=&\xi(nk)\prod_{p\mid c_k(\bs a)}\left(1+\frac{1}{p^{nk}-2}\right)\,.
\end{eqnarray*}
This completes the proof.
\end{proof}

\begin{corollary}
$V-V=\Lambda$.
\end{corollary}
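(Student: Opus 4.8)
The plan is to read the result straight off the autocorrelation weights computed in Theorem~\ref{auto}. Recall that $w(\bs a)$ was defined as the limit in~\eqref{weights}, i.e.\ the normalised count of pairs $\bs x,\bs x-\bs a$ that both lie in $V$. Consequently, whenever $w(\bs a)>0$ there must exist at least one such pair, and then $\bs a=\bs x-(\bs x-\bs a)\in V-V$. Since the reverse inclusion $V-V\subset\Lambda$ is immediate from $V\subset\Lambda$ together with the fact that $\Lambda$ is a group, it therefore suffices to show that $w(\bs a)>0$ for every $\bs a\in\Lambda$.

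To establish this positivity I would examine the two factors in
\[
w(\bs a)=\xi(nk)\prod_{p\mid c_k(\bs a)}\left(1+\frac{1}{p^{nk}-2}\right)
\]
separately. The standing assumptions guarantee $nk\ge2$ (indeed $k\ge2$ when $n=1$), so $p^{nk}\ge2^{nk}\ge4>2$ for every prime $p$. This has two consequences. First, each factor $1-2/p^{nk}$ in $\xi(nk)=\prod_p(1-2/p^{nk})$ is strictly positive, and since $\sum_p 2/p^{nk}$ converges for $nk\ge2$, the infinite product converges to a strictly positive value, whence $\xi(nk)>0$. Second, every factor $1+1/(p^{nk}-2)$ is well defined and exceeds $1$, and as $c_k(\bs a)$ has only finitely many prime divisors the second product is a finite product of terms $\ge1$, hence is at least $1$. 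Multiplying the two contributions gives $w(\bs a)>0$, as required.

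Combining these observations yields $\Lambda\subset V-V$ and therefore $V-V=\Lambda$. The only point needing genuine care is the positivity of $\xi(nk)$, which is precisely where the exclusion of the degenerate case $n=k=1$ enters: were $nk=1$, the factor at $p=2$ would vanish. An alternative route bypassing the autocorrelation entirely would be to apply Theorem~\ref{equivalence} to $\mathcal P=\{\bs 0,-\bs a\}$: since $|\mathcal P|\le2<p^{nk}$ for all $p$, criterion~(i) holds, so $L(V;\mathcal P,\emptyset)$ is non-empty and any $\bs t$ in it supplies the pair $\bs t,\bs t-\bs a\in V$. I would nonetheless present the autocorrelation argument as the primary one, since the statement is most naturally a corollary of the weight formula just proved.
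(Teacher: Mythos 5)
Your proposal is correct and takes essentially the same route as the paper: the paper likewise deduces $\Lambda\subset V-V$ from the positivity of the weights $w(\bs a)$ in Theorem~\ref{auto}, merely citing \cite[Lemma 1]{BMP} for the step you argue directly and leaving implicit the positivity of $\xi(nk)$ that you spell out. Your alternative sketch via Theorem~\ref{equivalence} applied to $\mathcal P=\{\bs 0,-\bs a\}$ is also valid (and more elementary, avoiding the autocorrelation entirely), but since you present the weight-formula argument as primary, your proof matches the paper's.
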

\begin{proof}
Trivially, one has $V-V\subset\Lambda$. From Theorem~\ref{auto}, one
gets $w(\bs a)>0$ for all $\bs a\in\Lambda$ and thus also
$\Lambda\subset V-V$ by~\cite[Lemma 1]{BMP}.
\end{proof}

The \emph{dual}\/ or \emph{reciprocal 
lattice}\/ $\Lambda^*$ of $\Lambda$ is
\[ 
\Lambda^*:=\{\bs y \in\mathbb{R}^n\mid \bs y\cdot\bs x\in\mathbb Z
\mbox{ for all } \bs x\in\Lambda\}
\]
By definition, the  \emph{denominator}\/ $q$ of a point $\bs p\in\mathbb
Q\Lambda^*$ is the smallest number $a\in\mathbb N$
with $a\bs p\in\Lambda^*$. This is also the greatest common divisor of the
numbers $a\in\mathbb N$ with $a\bs p\in\Lambda^*$, i.e.\ $a\bs
p\in\Lambda^*$ if and only if $q\mid a$.

\begin{theorem}
The diffraction measure $\widehat{\gamma}$ of the autocorrelation
$\gamma$ of\/ $V$ exists and is a positive, pure-point, 
translation-bounded measure
which is concentrated on the set of points in $\mathbb Q\Lambda^*$
with $(k+1)$-free denominator and whose intensity at a point with such
a denominator $q$ is given by
\begin{equation}\label{intensity}
\frac{1}{\zeta^2(nk)}\prod_{p\mid q}\frac{1}{(p^{nk}-1)^2}\,.
\end{equation}
This measure can also be interpreted as
\begin{equation}\label{diff}
\widehat{\gamma}=\xi(nk)\sum_{\substack{d=1\\[.5mm]\text{\scriptsize $d$
      squarefree}}}^{\infty}\left(\prod_{p\mid d}\frac{1}{p^{2nk}-2p^{nk}}\right)\omega_{\Lambda^*/d^k}\,,
\end{equation}
a weak*-convergent sum (in fact,  even
$\|\cdot
  \|_{\operatorname{loc}}$-convergent sum) of Dirac combs. (For lattices $\Lambda$ with
determinant $\neq 1$ the above formulas must be divided by the
square of\/ $\det(\Lambda)$.)
\end{theorem}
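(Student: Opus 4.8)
The plan is to Fourier-transform the autocorrelation computed in Theorem~\ref{auto} by first decomposing it as a $\|\cdot\|_{\operatorname{loc}}$-convergent superposition of lattice Dirac combs and then transforming each comb by Poisson's summation formula. Write $\gamma=\sum_{\bs a\in\Lambda}w(\bs a)\,\delta_{\bs a}$. The proof of Theorem~\ref{auto} already exhibits the weights in the form
\[
w(\bs a)=\sum_{\substack{d\mid c_k(\bs a)\\ d\text{ squarefree}}}\frac{1}{d^{nk}}\prod_{p\nmid d}\Bigl(1-\frac{2}{p^{nk}}\Bigr),
\]
and since $d\mid c_k(\bs a)$ is equivalent to $\bs a\in d^k\Lambda$, interchanging the two summations gives
\[
\gamma=\sum_{d\text{ squarefree}}c_d\,\omega_{d^k\Lambda},\qquad c_d:=\frac{1}{d^{nk}}\prod_{p\nmid d}\Bigl(1-\frac{2}{p^{nk}}\Bigr),
\]
where $\omega_{d^k\Lambda}=\sum_{\bs a\in d^k\Lambda}\delta_{\bs a}$. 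Because $p^{nk}\ge4>2$ for every prime $p$, each product is positive and bounded, so $\sum_d|c_d|<\infty$ and this decomposition converges in $\|\cdot\|_{\operatorname{loc}}$.

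Next I would transform term by term. Since $\det\Lambda=1$, the dual lattice of $d^k\Lambda$ is $\Lambda^*/d^k$ and its covolume is $d^{nk}$, so Poisson's summation formula gives $\widehat{\omega_{d^k\Lambda}}=d^{-nk}\,\omega_{\Lambda^*/d^k}$. Hence
\[
\widehat\gamma=\sum_{d\text{ squarefree}}\frac{c_d}{d^{nk}}\,\omega_{\Lambda^*/d^k},
\]
and rewriting $c_d/d^{nk}=d^{-2nk}\prod_{p\nmid d}(1-2/p^{nk})$ by means of \eqref{xi} as $\xi(nk)\prod_{p\mid d}(p^{2nk}-2p^{nk})^{-1}$ yields precisely \eqref{diff}. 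A uniform count $|\Lambda^*/d^k\cap B_1(\bs x)|=O(d^{nk})$ together with $\sum_d|c_d|<\infty$ then shows that the transformed series converges in $\|\cdot\|_{\operatorname{loc}}$ (and hence weak*) and is translation bounded; since every coefficient and every comb is positive, $\widehat\gamma$ is a positive, pure-point measure.

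Finally I would read off the support and the intensities. A point $\bs y\in\mathbb Q\Lambda^*$ of denominator $q$ lies in $\Lambda^*/d^k$ precisely when $q\mid d^k$; for squarefree $d$ this holds iff $\operatorname{rad}(q)\mid d$ and $v_p(q)\le k$ for every $p$, the latter condition saying exactly that $q$ is $(k+1)$-free. This gives the stated support. For such $q$ the intensity is $\sum_{\operatorname{rad}(q)\mid d}c_d/d^{nk}$; writing $d=\operatorname{rad}(q)\,e$ with $e$ squarefree and coprime to $q$ factors this into a finite product over $p\mid q$ times the Euler product $\prod_{p\nmid q}\bigl(1+(p^{2nk}-2p^{nk})^{-1}\bigr)$, and combining with $\xi(nk)=\prod_p(1-2/p^{nk})$ and $1/\zeta(nk)=\prod_p(1-1/p^{nk})$ collapses everything, prime by prime, to \eqref{intensity}.

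The main obstacle is the rigorous justification that the Fourier transform of $\gamma$ really equals the term-by-term transform of the decomposition, i.e.\ that one may interchange the transform with the infinite sum of combs. I would handle this exactly as in \cite{BMP}: approximate $\gamma$ by the autocorrelations of the periodic sets $V_P$, whose diffraction is a genuine finite-lattice computation via Poisson summation, and then pass to the limit, using the summability $\sum_d|c_d|<\infty$ and the uniform density bound for $\Lambda^*/d^k$ to control the tails and to secure $\|\cdot\|_{\operatorname{loc}}$-convergence throughout.
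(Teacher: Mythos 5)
Your proposal follows the paper's proof essentially verbatim: the same decomposition $\gamma=\sum_d c_d\,\omega_{d^k\Lambda}$ read off from the proof of Theorem~\ref{auto}, term-by-term Poisson summation giving \eqref{diff}, the same estimates ($\|\omega_{\Lambda^*/d^k}\|_{\operatorname{loc}}=O(d^{nk})$ against coefficients $O(d^{-2nk})$) for translation-boundedness and pure-pointedness, and the same $d=q^*m$ Euler-product collapse yielding the support condition and \eqref{intensity}. The only minor divergence is your last paragraph: the paper justifies the term-by-term transform not by approximating $\gamma$ with autocorrelations of the periodic sets $V_P$, but simply by noting that the series for $\gamma$ converges weak* as tempered distributions and that the Fourier transform operator is weak*-continuous, after which \cite[Lemma~2]{BMP} identifies the transformed weak*-sum with the pointwise sum of its terms.
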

\begin{proof}
Let $\gamma$ be the autocorrelation of $V$. As shown in the proof of
Theorem~\ref{auto}, one has
\[
w(\bs a)=\xi(nk)\sum_{\substack{d=1\\[.5mm]\text{\scriptsize $d$
      squarefree}\\[.5mm]\bs a \in d^k\Lambda}}^{\infty} \frac{1}{d^{nk}}\prod_{p\mid d}\left
  (1-\frac{2}{p^{nk}}\right )^{-1}\,.
\]
So by Theorem~\ref{auto} and~\cite[Lemma 1]{BMP} one obtains
\[
\gamma=\xi(nk)\sum_{\substack{d=1\\[.5mm]\text{\scriptsize $d$
      squarefree}}}^{\infty} \frac{1}{d^{nk}}\prod_{p\mid d}\left
  (1-\frac{2}{p^{nk}}\right )^{-1}\omega_{d^k\Lambda}\,.
\]
Since $\|\omega_{d^k\Lambda}\|_{\operatorname{loc}}=O(1)$ and the
coefficient of $\omega_{d^k\Lambda}$ is $O(1/d^{nk})$, this sum of
tempered distributions is convergent in the weak*-topology
by~\cite[Lemma 2]{BMP}. By the Poisson summation formula for lattice
Dirac combs~\cite[Eq.~(31)]{BMP}, its term-by-term Fourier transform is
\begin{eqnarray*}
\widehat{\gamma}&=&
\xi(nk)\sum_{\substack{d=1\\[.5mm]\text{\scriptsize $d$
      squarefree}}}^{\infty} \frac{1}{d^{2nk}}\prod_{p\mid d}\left
  (1-\frac{2}{p^{nk}}\right )^{-1}\omega_{\Lambda^*/d^k}\\
&=& \xi(nk)\sum_{\substack{d=1\\[.5mm]\text{\scriptsize $d$
      squarefree}}}^{\infty} \left(\prod_{p\mid d}\frac{1}{p^{2nk}-2p^{nk}}\right)\omega_{\Lambda^*/d^k}\,,
\end{eqnarray*}
which weak*-converges to the diffraction measure of $V$, since the
Fourier transform operator is weak*-continuous. Since $\|\omega_{\Lambda^*/d^k}\|_{\operatorname{loc}}=O(d^{nk})$ and the
coefficient of $\omega_{\Lambda^*/d^k}$ is $O(1/d^{2nk})$, the
weak*-sum is a translation-bounded pure-point measure equal to the
pointwise sum of its terms by~\cite[Lemma 2]{BMP}.\footnote{Note that
  $\widehat{\gamma}$ is even a $\|\cdot
  \|_{\operatorname{loc}}$-convergent sum of Dirac combs. Since
  convergence with respect to the local norm preserves the spectral type, it is thus
  clear that $\widehat{\gamma}$ is a pure-point measure;
  cf.~\cite[Theorem 8.4]{BG}.} This establishes
the series form~\eqref{diff} for the diffraction spectrum.

The explicit values of the intensities can now be calculated as
follows. Let $\bs p$ be a point in $\mathbb Q\Lambda^*$ with
denominator $q$. We can assume that $q$ is $(k+1)$-free, since
otherwise there is no contribution 
to~\eqref{diff} at all. The terms in~\eqref{diff} that contribute to
the intensity at $\bs p$ are those with $d=mq^*$, where $q^*$ is the
squarefree kernel of $q$ and  $m\in\mathbb N$ is squarefree and
coprime to $q$. Thus the intensity at $\bs p$ is
\[
\xi(nk)\prod_{p\mid q}\frac{1}{p^{2nk}-2p^{nk}}\sum_{\substack{m=1\\[.5mm]\text{\scriptsize $m$
      squarefree}\\[.5mm] (m,q)=1}}^{\infty}\prod_{p\mid m}\frac{1}{p^{2nk}-2p^{nk}}\,.
\]
Using the Euler products in~\eqref{zetafn} and~\eqref{xi} this
simplifies to
\begin{eqnarray*}
&&\xi(nk)\prod_{p\mid q}\frac{1}{p^{2nk}-2p^{nk}}\prod_{p\nmid
  q}\left(1+\frac{1}{p^{2nk}-2p^{nk}}\right)\\
&=&\xi(nk)\prod_{p\mid
  q}\frac{1}{p^{2nk}}\left(1-\frac{2}{p^{nk}}\right)^{-1}\prod_{p\nmid
q}\left(1-\frac{1}{p^{nk}}\right)^2\left(1-\frac{2}{p^{nk}}\right)^{-1}\\
&=&\frac{1}{\zeta^2(nk)}\prod_{p\mid q}\frac{1}{p^{2nk}}\left(1-\frac{1}{p^{nk}}\right)^{-2}\,,
\end{eqnarray*}
which agrees with~\eqref{intensity}.
\end{proof}

One explicitly sees that $\widehat{\gamma}$ above is fully translation
invariant, with lattice of periods $\Lambda^*$, in accordance
with Theorem~1 of~\cite{B}. Moreover, since the action of the group of
automorphisms of $\Lambda^*$,
$\operatorname{Aut}(\Lambda^*)\simeq {\rm GL}(n,\mathbb Z)$, on $\mathbb
Q\Lambda^*$ preserves the denominator, $\widehat{\gamma}$ is
$(\Lambda^*\rtimes \operatorname{Aut}(\Lambda^*))$-symmetric. In
particular, both $\widehat{\gamma}$ and the set $V$ itself are
${\rm GL}(n,\mathbb Z)$-symmetric.

\section{Improving the error terms}\label{errorterm}

What has kept the error term large in the argument as we have presented
it so far is the last term of \eqref{error}, with $|\mathcal P|$ in the
exponent of $S$ (in the second component of the minimum).  This arose
from the $O(1)$ error term in \eqref{innersum}, when \eqref{innersum}
was substituted for the inner sum in \eqref{VPcount}.  The $O(1)$
error term was not even a boundary effect: it was caused solely by
lattices whose determinants are much larger than the volume of the
region in which points are being counted.  The result of this was to
put the burden of keeping the last term of \eqref{error} small onto $P$
(which occurs in the first component of the minimum), causing an increase
in the error due to the tail of the \mbox{$\zeta$-function} product.
Mirsky's idea in \cite{Mir2} and \cite{Mir1} was to show that the terms
with some $d_i$ large contribute a negligible amount to \eqref{VPcount}
and can be discarded before the substitution of \eqref{innersum} is made.
The remaining terms have the individual $d_i$'s so well bounded that the
second component of the minimum can take over the role of providing a
respectable error term, freeing $P$ to be assigned a much larger value and
thus reducing the size of the tail of the \mbox{$\zeta$-function}
product. 

Let  $r\in\mathbb
Z^+$,  let $\bs p_1,\dots,\bs p_r\in\Lambda$, and let $m_1,\dots,m_r\in\mathbb N$. Define the symbol $E\left(\substack{m_1,\dots,m_{r}\\[.5mm]
    \bs p_1,\dots,\bs p_{r}  }\right)$ as $1$ or $0$ according to the
system of congruences in $\bs t \in\Lambda$,
\begin{equation}\label{system}
\bs t+\bs p_i\in m_i\Lambda\quad (1\le i\le r)\,,
\end{equation}
being solvable or not. Further, for a positive real number $R$ and a point
$\bs x\in\mathbb R^n$, let
$T\Big(\bs x;R;\substack{m_1,\dots,m_{r}\\[.5mm] \bs p_1,\dots,\bs
  p_{r}}\Big)$ denote the number of points $\bs t\in\Lambda$ such that
\begin{equation*}
\begin{array}[t]{c}
\bs t\in B_R(\bs x)\,,\\
\bs t+\bs p_{i}\in m_{i}\Lambda\quad(1\le i\le r)\,.
\end{array}
\end{equation*}
We denote by $[m_1,\dots,m_r]$ the least common
multiple of $m_1,\dots,m_r$.  Further, $(m_i,m_j)$ denotes the greatest
common divisor of $m_i$ and $m_j$. For brevity, let $c(\bs l)$ denote
the $1$-content of a nonzero point $\bs
l\in\Lambda$. 

\begin{lemma}\label{solubility}
The system~\eqref{system} of congruences is soluble if and only if
$$
(m_i,m_j)\mid c(\bs p_i-\bs p_j)\quad (1\le i<j\le r)\,.
$$
In the case of solubility, the solutions form precisely one residue
class $$\pmod {[m_1,\dots,m_r]\Lambda}\,.$$
\end{lemma}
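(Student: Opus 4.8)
The plan is to prove necessity directly and then reduce both the sufficiency and the uniqueness to the classical Chinese Remainder Theorem for $\mathbb{Z}$ by passing to coordinates. For necessity, suppose $\bs t$ solves \eqref{system}. Since $(m_i,m_j)\mid m_i$ and $(m_i,m_j)\mid m_j$ we have $m_i\Lambda\subset(m_i,m_j)\Lambda$ and $m_j\Lambda\subset(m_i,m_j)\Lambda$, so the $i$th and $j$th congruences both reduce to give $\bs t\equiv-\bs p_i\equiv-\bs p_j\pmod{(m_i,m_j)\Lambda}$, whence $\bs p_i-\bs p_j\in(m_i,m_j)\Lambda$. By the characterisation of the $1$-content (namely $d\mid c(\bs l)\iff\bs l\in d\Lambda$, together with the convention $c(\bs0)=\infty$ that handles the case $\bs p_i=\bs p_j$), this is exactly $(m_i,m_j)\mid c(\bs p_i-\bs p_j)$.

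For sufficiency I would fix a basis of $\Lambda$, identifying $\Lambda$ with $\mathbb{Z}^n$ so that each scalar dilate $m\Lambda$ becomes $m\mathbb{Z}^n$. Writing $\bs p_i=(p_i^{(1)},\dots,p_i^{(n)})$ in these coordinates, the single lattice congruence $\bs t+\bs p_i\in m_i\Lambda$ splits into the $n$ independent scalar congruences $t^{(\ell)}\equiv-p_i^{(\ell)}\pmod{m_i}$ ($1\le\ell\le n$). Thus \eqref{system} is soluble if and only if, for each coordinate $\ell$, the scalar system $\{t^{(\ell)}\equiv-p_i^{(\ell)}\pmod{m_i}\}_{i=1}^r$ is soluble; and since $\bs p_i-\bs p_j\in(m_i,m_j)\Lambda$ is equivalent to $(m_i,m_j)\mid(p_i^{(\ell)}-p_j^{(\ell)})$ for every $\ell$, the hypothesis furnishes precisely the compatibility conditions for all $n$ of these scalar systems simultaneously.

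It therefore remains to establish the classical statement over $\mathbb{Z}$: the system $t\equiv a_i\pmod{m_i}$ ($1\le i\le r$) is soluble iff $(m_i,m_j)\mid(a_i-a_j)$ for all $i<j$, and its solutions then form a single class modulo $[m_1,\dots,m_r]$. This is where the real work lies, and I would handle it by localising at each prime $p$: for fixed $p$ pick an index $i_0$ maximising $v_p(m_i)$ and impose $t\equiv a_{i_0}\pmod{p^{v_p(m_{i_0})}}$; for any other $i$ the $p$-part of $(m_i,m_{i_0})$ is $p^{v_p(m_i)}$, so the hypothesis $(m_i,m_{i_0})\mid(a_i-a_{i_0})$ gives $a_{i_0}\equiv a_i\pmod{p^{v_p(m_i)}}$ and hence $t\equiv a_i\pmod{p^{v_p(m_i)}}$, as required. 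These prime-power congruences have pairwise coprime moduli whose product is $[m_1,\dots,m_r]$, so \eqref{CRT} assembles them into a solution that is unique modulo $[m_1,\dots,m_r]$. Reassembling the $n$ coordinates then shows that, whenever it is non-empty, the solution set of \eqref{system} is exactly one residue class modulo $[m_1,\dots,m_r]\Lambda$.

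The only genuine obstacle is this non-coprime Chinese Remainder step, since the form of \eqref{CRT} available to us is stated for coprime moduli; the prime-by-prime reduction above is precisely what converts the pairwise compatibility hypothesis $(m_i,m_j)\mid c(\bs p_i-\bs p_j)$ into a coprime system of prime-power congruences to which \eqref{CRT} applies.
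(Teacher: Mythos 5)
Your proof is correct, but it does genuinely more work than the paper's, which is a one-line citation: after the same reduction to coordinates with respect to a basis of $\Lambda$, the paper simply invokes Lemma~1 of \cite{Mir1}, which \emph{is} the classical non-coprime Chinese Remainder Theorem that you set out to prove (solubility of $t\equiv a_i\pmod{m_i}$, $1\le i\le r$, if and only if $(m_i,m_j)\mid a_i-a_j$ for all $i<j$, the solutions then forming one class modulo $[m_1,\dots,m_r]$). Your prime-localisation argument --- picking for each prime $p$ an index $i_0$ maximising the power of $p$ dividing the moduli, using the pairwise hypothesis to check that the single congruence $t\equiv a_{i_0}\pmod{p^{v_p(m_{i_0})}}$ forces the $p$-part of every other congruence, and then assembling the resulting pairwise coprime prime-power system with \eqref{CRT} --- is a correct and standard proof of that scalar lemma, so your write-up is self-contained where the paper's is not; that is what your route buys, at the cost of the extra elementary argument. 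Your lattice-level necessity step (reducing the $i$th and $j$th congruences of \eqref{system} modulo $(m_i,m_j)\Lambda$) and your handling of the convention $c(\bs0)=\infty$ are also fine, and arguably cleaner than deducing necessity coordinatewise. One small point to make explicit in the uniqueness claim: \eqref{CRT} tells you the solutions of your selected prime-power system form exactly one class modulo $[m_1,\dots,m_r]$, and you verify that every such $t$ solves the original scalar system; you should also record the (trivial) converse inclusion --- any solution of the original system satisfies $t\equiv a_{i_0}\pmod{p^{v_p(m_{i_0})}}$ because $p^{v_p(m_{i_0})}$ divides $m_{i_0}$ --- so that the two solution sets coincide and the original solutions form precisely that one class. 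Alternatively, uniqueness is immediate without \eqref{CRT}: any two solutions are congruent modulo every $m_i$, hence modulo $[m_1,\dots,m_r]$.
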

\begin{proof}
This is an immediate consequence of~\cite[Lemma 1]{Mir1} applied to each coordinate
with respect to a basis of $\Lambda$.
\end{proof}

If $m_1,\dots,m_r$ are pairwise coprime, the last result boils
down to the Chinese Remainder Theorem~\eqref{CRT}. In fact, only this
special case will be needed in Theorem~\ref{VPpatches2}
below. However, since the
subsequent lemmas may be of independent interest, we prefer to stick to the
general case.

\begin{lemma}\label{lemT}
$$
T\Big(\bs x;R;\substack{m_1,\dots,m_{r}\\[.5mm] \bs p_1,\dots,\bs
  p_{r}}\Big)=R^nv_n\,\frac{E\left(\substack{m_1,\dots,m_{r}\\[.5mm]
    \bs p_1,\dots,\bs p_{r}  }\right)}{[m_1,\dots,m_{r}]^n}+O\left(\frac{R^{n-1}}{[m_1,\dots,m_r]^{n-1}}\right)+O(1)\,,
$$
where the implied $O$-constants depend only on $\Lambda$. 
\end{lemma}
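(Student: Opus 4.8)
The plan is to reduce the count to lattice-point counting in a single coset of the sublattice $[m_1,\dots,m_r]\Lambda$ and then to invoke the standard estimate~\eqref{LambdaCount}. First I would dispose of the insoluble case: by Lemma~\ref{solubility}, if the system~\eqref{system} has no solution, then $E\left(\substack{m_1,\dots,m_{r}\\ \bs p_1,\dots,\bs p_{r}}\right)=0$ and $T\Big(\bs x;R;\substack{m_1,\dots,m_{r}\\ \bs p_1,\dots,\bs p_{r}}\Big)=0$, so both sides of the asserted identity vanish (the $O(1)$ term harmlessly absorbing zero) and there is nothing to prove.

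So assume the system is soluble. By the second clause of Lemma~\ref{solubility}, its solutions form exactly one residue class modulo $[m_1,\dots,m_r]\Lambda$; writing $M:=[m_1,\dots,m_r]$ and fixing a representative $\bs t_0$ of that class, the set of admissible $\bs t$ is precisely the coset $\bs t_0+M\Lambda$. Hence $T$ equals the number of points of $\bs t_0+M\Lambda$ lying in $B_R(\bs x)$, equivalently the number of points of the sublattice $M\Lambda$ in the translated ball $B_R(\bs x-\bs t_0)$.

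Now I would count these points by rescaling. Since $M\Lambda$ is a scalar multiple of $\Lambda$, one has $|M\Lambda\cap B_R(\bs y)|=|\Lambda\cap B_{R/M}(\bs y/M)|$ for any centre $\bs y$, so applying~\eqref{LambdaCount} to $\Lambda$ itself with radius $R/M$ and centre $(\bs x-\bs t_0)/M$ gives $(R/M)^nv_n/\det(\Lambda)+O((R/M)^{n-1})+O(1)$. Using $\det(\Lambda)=1$, this equals $R^nv_n/M^n+O(R^{n-1}/M^{n-1})+O(1)$, which is exactly the asserted estimate in the soluble case (where the numerator $E$ equals $1$). The rescaling also makes the uniformity of the $O$-constant transparent: it is inherited directly from the constant in~\eqref{LambdaCount} for $\Lambda$, and a mere translation of the centre does not affect it.

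There is essentially no obstacle here, since the combinatorial content has been isolated in Lemma~\ref{solubility} (which pins down the solution set as a single coset of $M\Lambda$) and the analytic content in the appendix estimate~\eqref{LambdaCount}. The only points demanding a little care are bookkeeping ones: correctly reading off $\det(M\Lambda)=M^n$ so that the main term and the power of $M$ in the first error term come out right, and noting that shifting the centre from $\bs x$ to $\bs x-\bs t_0$ alters neither the main term nor the orders of the error terms.
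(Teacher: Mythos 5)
Your proposal is correct and follows essentially the same route as the paper: Lemma~\ref{solubility} reduces the count to a single coset of $[m_1,\dots,m_r]\Lambda$ (or to zero in the insoluble case, matching $E=0$), and then \eqref{LambdaCount} applied to that sublattice of determinant $[m_1,\dots,m_r]^n$ yields the stated main and error terms. Your rescaling step is just an explicit justification of why the $O$-constants in \eqref{LambdaCount} for $[m_1,\dots,m_r]\Lambda$ depend only on $\Lambda$, which the appendix already asserts (shape, not size), so the two arguments coincide in substance.
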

\begin{proof}
This is an immediate consequence of Lemma~\ref{solubility} together
with~\eqref{LambdaCount} applied to the lattice $[m_1,\dots,m_r]\Lambda$.
\end{proof}

\begin{lemma}\label{lemE}
$$
\frac{E\left(\substack{m_1,\dots,m_{r}\\[.5mm]
    \bs p_1,\dots,\bs p_{r}  }\right)}{[m_1,\dots,m_r]}\le
\frac{K}{m_1\cdots m_r}\,.
$$
where $K$ depends only on $r,\bs p_1,\dots,\bs p_{r}$.
\end{lemma}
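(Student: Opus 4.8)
The plan is to reduce the statement to the solubility criterion of Lemma~\ref{solubility} together with one purely arithmetic inequality comparing the product of the $m_i$ with their least common multiple. First I would dispose of the trivial case: if the symbol $E$ equals $0$, then the left-hand side vanishes and the asserted bound holds for any $K\ge 0$. So from now on I assume the system \eqref{system} is soluble, i.e.\ $E=1$, and the inequality to be proved becomes
\[
\frac{m_1\cdots m_r}{[m_1,\dots,m_r]}\le K.
\]
Since $[m_1,\dots,m_r]$ divides the product $m_1\cdots m_r$, the left-hand side is a positive integer, so it suffices to bound it.

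The key arithmetic step is the elementary inequality
\[
\frac{m_1\cdots m_r}{[m_1,\dots,m_r]}\le\prod_{1\le i<j\le r}(m_i,m_j),
\]
which I would prove prime by prime. Fixing a prime $p$ and writing $a_i=v_p(m_i)$ for the $p$-adic valuations, the valuation of the left-hand side is $\sum_i a_i-\max_i a_i$, while that of the right-hand side is $\sum_{i<j}\min(a_i,a_j)$. Sorting the $a_i$ increasingly as $a_{(1)}\le\cdots\le a_{(r)}$ shows that the former equals $\sum_{k=1}^{r-1}a_{(k)}$ (the sum of all but the largest), whereas the latter equals $\sum_{k=1}^{r-1}(r-k)a_{(k)}$; since each coefficient $r-k$ is at least $1$ on this range, the second sum dominates the first. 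As this holds for every prime, the displayed divisibility, and hence the inequality, follows.

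Finally I would feed in the hypothesis $E=1$. By Lemma~\ref{solubility}, solubility forces $(m_i,m_j)\mid c(\bs p_i-\bs p_j)$ for all $i<j$; provided the points $\bs p_1,\dots,\bs p_r$ are distinct (as they are in the intended application to a patch $\mathcal P$), each $c(\bs p_i-\bs p_j)$ is a finite positive integer, so $(m_i,m_j)\le c(\bs p_i-\bs p_j)$. Combining this with the arithmetic inequality gives
\[
\frac{m_1\cdots m_r}{[m_1,\dots,m_r]}\le\prod_{1\le i<j\le r}(m_i,m_j)\le\prod_{1\le i<j\le r}c(\bs p_i-\bs p_j)=:K,
\]
and $K$ depends only on $r$ and $\bs p_1,\dots,\bs p_r$, as required. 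I expect the only point needing care to be the prime-by-prime combinatorics of the middle display, together with the observation that the distinctness of the $\bs p_i$ is precisely what keeps the contents finite: were two of the $\bs p_i$ to coincide, the corresponding $(m_i,m_j)$ would be unconstrained and the ratio $m_1\cdots m_r/[m_1,\dots,m_r]$ could be made arbitrarily large, so no such $K$ could exist.
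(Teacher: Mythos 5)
Your proof is correct and follows essentially the same route as the paper, whose proof simply defers to Lemma~3 of Mirsky's paper \cite{Mir1}: there, too, the argument combines the solubility criterion (here Lemma~\ref{solubility}), giving $(m_i,m_j)\mid c(\bs p_i-\bs p_j)$, with the elementary inequality $[m_1,\dots,m_r]\ge m_1\cdots m_r/\prod_{i<j}(m_i,m_j)$, which you prove prime by prime. Your closing observation that distinctness of the $\bs p_i$ is needed (and is implicit in the intended application, where they are the points of a finite set $\mathcal P$) is a valid and worthwhile clarification of the lemma's hypotheses.
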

\begin{proof}
This follows along the same lines as Lemma~3 of~\cite{Mir1} by
employing Lemma~\ref{solubility} instead of~\cite[Lemma 1]{Mir1}. 
\end{proof}

For points $\bs p_1,\dots,\bs p_r\in\Lambda$ and positive real numbers
$R$ and $\alpha$, denote by $L(\bs x;R;\bs p_1,\dots,\bs p_r;\alpha)$ the cardinality of
systems $(\bs t,a_1,\dots,a_r)$ of lattice points $\bs t\in\Lambda$ and
numbers $a_1,\dots,a_r\in\mathbb N$ such that
\begin{equation*}
\begin{array}[t]{c}
\bs t\in B_R(\bs x)\,,\\
\bs t+\bs p_{i}\in a_{i}^{k}\Lambda\quad(1\le i\le r)\,,\\
a_1\cdots a_r>R^{\alpha}\,.
\end{array}
\end{equation*}

\begin{lemma}\label{freelemma}
$$L(\bs x;R;\bs p_1,\dots,\bs
p_r;\alpha)=O\left(R^{n-\alpha(nk-1)+\epsilon}\right)+O\left(R^{n-1+\frac{2}{nk+1}+\epsilon}\right)\,,$$
where the implied $O$-constants depend only on $\Lambda,k,r,\bs p_1,\dots,\bs
p_{r}$.
\end{lemma}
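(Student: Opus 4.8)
The plan is to count the solutions by first summing over the exponent vectors. Grouping the points $\bs t$ according to their vector $(a_1,\dots,a_r)$ and writing $M:=[a_1,\dots,a_r]$, the definition becomes
\[
L(\bs x;R;\bs p_1,\dots,\bs p_r;\alpha)=\sum_{a_1\cdots a_r>R^{\alpha}}
T\Big(\bs x;R;\substack{a_1^k,\dots,a_r^{k}\\[.5mm]\bs p_1,\dots,\bs p_{r}}\Big),
\]
where I restrict, as is implicit in the applications, to $\bs t$ with $\bs t+\bs p_i\neq\bs 0$ for every $i$ (the at most $r$ points with some $\bs t+\bs p_i=\bs 0$ have $c_k=\infty$ and must be excluded for $L$ to be finite). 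For such $\bs t$ the relation $\bs t+\bs p_i\in a_i^k\Lambda$ forces $a_i^k\lambda\le\|\bs t+\bs p_i\|\le R+\|\bs x\|+\|\bs p_i\|$, so in the relevant range $\|\bs x\|=O(R)$ every contributing vector satisfies $a_i\le Y:=(cR)^{1/k}$. Moreover, whenever the system is soluble, Lemma~\ref{solubility} gives $(a_i,a_j)^k\mid c(\bs p_i-\bs p_j)$, so the $a_i$ are pairwise coprime up to bounded factors and hence $M\gg a_1\cdots a_r$; this is exactly the content of Lemma~\ref{lemE}.

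I would then insert Lemma~\ref{lemT} and treat its three terms separately. For the \emph{main term} the almost-coprimality bound $M^{nk}\gg(a_1\cdots a_r)^{nk}$ gives
\[
R^n\!\!\sum_{a_1\cdots a_r>R^{\alpha}}\frac{1}{M^{nk}}\ll R^n\!\!\sum_{N>R^{\alpha}}\frac{\tau_r(N)}{N^{nk}}\ll R^{\,n-\alpha(nk-1)+\epsilon},
\]
using $\tau_r(N)=O(N^{\epsilon})$ from \eqref{taubound} together with $nk\ge2$; this produces the first claimed error term. The \emph{middle term} $R^{n-1}/M^{(n-1)k}$ summed against the same tail is $O(R^{\,n-1+\epsilon})$ (for $n=1$ it coincides with the constant term), which is harmless since $n-1+\epsilon\le n-1+\frac{2}{nk+1}+\epsilon$.

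The real difficulty is the \emph{constant term} $O(1)$ of Lemma~\ref{lemT}. It may only be charged to those vectors whose solution coset (a coset of $M^k\Lambda$) actually meets $B_R(\bs x)$, i.e.\ to the genuinely \emph{active} vectors: summing $O(1)$ over all soluble vectors with $a_i\le Y$ alone would already cost $O(R^{r/k})$ and ruin the bound. \textbf{Counting the active vectors is the main obstacle.} The trouble is that, up to a factor $R^{\epsilon}$, this count is comparable to $L$ itself (every active vector is pinned to a point $\bs t$ with $\prod_i c_k(\bs t+\bs p_i)$ large, and conversely each such $\bs t$ carries only $O(R^{\epsilon})$ vectors), so no purely combinatorial bookkeeping can close the estimate.

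To break this circularity I would follow Mirsky's device from \cite{Mir1}, refined as in Tsang~\cite{Tsang}: discard the vectors with some $a_i$ large \emph{before} replacing the inner point-count by its asymptotic, and bound the number of active vectors directly through the lattice-point discrepancy of the dilated lattices $a_i^k\Lambda$ in $B_R(\bs x)$, estimated by the van der Corput (exponential-sum) method rather than by the crude boundary term of \eqref{LambdaCount}. It is this discrepancy bound that supplies the van der Corput exponent and converts the trivial boundary cost into the saving $n-1+\frac{2}{nk+1}$, yielding the second claimed error term; I expect the bulk of the technical work (and the delicate dependence on $(n,k)$) to reside entirely in this last step.
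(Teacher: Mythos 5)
Your treatment of the main and middle terms is sound: grouping the count by exponent vectors, using Lemma~\ref{lemE} (i.e.\ the near-coprimality forced by Lemma~\ref{solubility}) to replace $[a_1^k,\dots,a_r^k]^n$ by $(a_1\cdots a_r)^{nk}$ up to a constant, and summing the tail with $\tau_r(N)=O(N^\epsilon)$ does yield $O(R^{n-\alpha(nk-1)+\epsilon})$, with the middle term contributing only $O(R^{n-1+\epsilon})$. You also correctly isolate the crux: the $O(1)$ term of Lemma~\ref{lemT} cannot be charged to all $O(R^{r/k+\epsilon})$ soluble tuples. But at precisely this point your argument ceases to be a proof. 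You assert that ``no purely combinatorial bookkeeping can close the estimate'' and defer the entire difficulty to an unspecified van der Corput/lattice-point-discrepancy refinement of \cite{Mir1} and \cite{Tsang}, with no actual estimate produced; the claim that $2/(nk+1)$ arises as a van der Corput exponent is likewise unsubstantiated. Since controlling exactly this term \emph{is} the content of the lemma, this is a genuine gap, and moreover the guiding claim is false: the paper closes the gap by elementary combinatorial bookkeeping, with no exponential sums anywhere.

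The paper's device is induction on $r$. The base case $r=1$ is exactly the situation where summing the $O(1)$'s is affordable (cost $O(R^{1/k})$). In the step $r\to r+1$, write $a=a_1\cdots a_{r+1}$ and split the sum according to whether all ratios $a/a_i$ are at most $R^\beta$, or (after permuting indices) $a_1\cdots a_r>R^\beta$. In the first case, multiplying the $r+1$ inequalities gives $a\le R^{\beta(r+1)/r}$, so there are only $O(R^{\beta(r+1)/r+\epsilon})$ tuples and the $O(R^{n-1})+O(1)$ terms of Lemma~\ref{lemT} may be summed crudely, costing $O(R^{n-1+\beta(r+1)/r+\epsilon})$, while the main term is handled via Lemma~\ref{lemE} as you did. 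In the second case no point-count asymptotic is used at all: for each fixed $\bs t$ the number of admissible $a_{r+1}$ is $O(R^\epsilon)$ by the divisor bound \eqref{taubound}, so this part is $O\bigl(R^\epsilon L(\bs x;R;\bs p_1,\dots,\bs p_r;\beta)\bigr)$ and the induction hypothesis applies with threshold $\beta$. Choosing $\beta=r/(rnk+1)$ balances the two contributions, and the inequalities $(r+1)/(rnk+1)\le 2/(nk+1)$ and $r/(rnk+1)\ge 1/(nk+1)$ show both land within $O(R^{n-1+\frac{2}{nk+1}+\epsilon})$; this optimization, not any discrepancy estimate, is the true source of the exponent $2/(nk+1)$.
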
 
\begin{proof}
The proof is by induction on $r$. For $r=1$, we can apply
Lemma~\ref{lemT} to obtain 
\begin{eqnarray*}
L(\bs x;R;\bs
p_1;\alpha)&=&\sum_{\substack{\bs t\in B_R(\bs x)\\[.5mm]
    \bs t+\bs p_1\in a_1^k
    \Lambda\\[.5mm]a_1>R^{\alpha}}} 1\,\,=\,\,\sum_{\substack{a_1>R^{\alpha}\\[.5mm] a_1<
    ((R+\|\bs p_1\|)/\lambda)^{1/k}}}\hspace{2mm}T\Big(\bs x;R;\substack{a_1^k\\[.5mm]
    \bs p_1}\Big)\\
&=&\sum_{\substack{a_1>R^{\alpha}\\[.5mm] a_1<
    ((R+\|\bs p_1\|)/\lambda)^{1/k}}}\Big(\frac{R^nv_n}{a_1^{nk}}+O\Big(\frac{R^{n-1}}{a_1^{(n-1)k}}\Big)+O(1)\Big)\\&=&O\left(R^{n-\alpha(nk-1)}\right)+O\left(R^{n-1}\log R\right)+O(R^{1/k})\,,
\end{eqnarray*}
where there is no middle term when $n=1$ and the logarithm in the
middle term is only needed in the case $n=2$, $k=1$, when the
corresponding harmonic
series $\sum 1/a_1^{(n-1)k}$ diverges. In all other cases, these
series converge, and the middle
term can be
taken as $O(R^{n-1})$. Thus the lemma holds for $r=1$. Assume now that
the assertion holds for some $r\ge
1$. Let $\beta$ be a
positive real parameter to be fixed later. Writing $a=a_1\cdots
a_{r+1}$, for symmetry reasons one has
$$
L(\bs x;R;\bs p_1,\dots,\bs p_{r+1};\alpha)=O(\sum_{\substack{\bs t\in B_R(\bs
    x)\\[.5mm] \bs t+\bs p_1\in a_1^k\Lambda\\[.5mm]\cdots\\[.5mm]\bs
    t+\bs p_{r+1}\in a_{r+1}^k\Lambda\\[.5mm]a>R^{\alpha}\\[.5mm]\frac{a}{a_1},\dots,\frac{a}{a_{r+1}}\le
  x^{\beta}}}1)+O(\sum_{\substack{\bs t\in B_R(\bs
    x)\\[.5mm] \bs t+\bs p_1\in a_1^k\Lambda\\[.5mm]\cdots\\[.5mm]\bs
    t+\bs p_{r+1}\in
    a_{r+1}^k\Lambda\\[.5mm]a>R^{\alpha}\\[.5mm]a_1\cdots a_r>
  x^{\beta}}}1)=L_1+L_2\,,
$$
say. Employing Lemmas~\ref{lemT} and~\ref{lemE}, one obtains
\begin{eqnarray*}
L_1&=&O(\sum_{\substack{\bs t\in B_R(\bs
    x)\\[.5mm] \bs t+\bs p_1\in a_1^k\Lambda\\[.5mm]\cdots\\[.5mm]\bs
    t+\bs p_{r+1}\in a_{r+1}^k\Lambda\\[.5mm]R^{\alpha}<a\le
    R^{\beta(r+1)/r}}}1)\,\,=\,\,O\Big(\sum_{R^{\alpha}<a\le
    R^{\beta(r+1)/r}}T\Big(\bs x;R;\substack{a_1^k,\dots,a_{r+1}^k\\[.5mm]
    \bs p_1,\dots,\bs p_{r+1}}\Big)\Big)\\&=& O\Big(\sum_{R^{\alpha}<a\le
    R^{\beta(r+1)/r}}\Big(R^nv_n\,\frac{E\left(\substack{a_1^k,\dots,a_{r+1}^k\\[.5mm]
    \bs p_1,\dots,\bs p_{r+1}  }\right)}{[a_1^k,\dots,a_{r+1}^k]^n}+R^{n-1}+1\Big)\Big)\\
&=&O\Big(R^n\sum_{a>R^{\alpha}}\frac{1}{(a_1\cdots a_{r+1})^{nk}}\Big)+O\left(R^{n-1+\beta(r+1)/r+\epsilon}\right)\\
&=&O\left(R^{n-\alpha(nk-1)+\epsilon}\right)+O\left(R^{n-1+\beta(r+1)/r+\epsilon}\right)\,.
\end{eqnarray*}
With $\tau$ denoting the ordinary divisor function, one further obtains
\begin{eqnarray*}
L_2&=&O(\sum_{\substack{\bs t\in B_R(\bs
    x)\\[.5mm] \bs t+\bs p_1\in a_1^k\Lambda\\[.5mm]\cdots\\[.5mm]\bs
    t+\bs p_{r}\in
    a_{r}^k\Lambda\\[.5mm]a_1\cdots a_r>
  x^{\beta}}}\hspace{2mm}\sum_{\bs t+\bs p_{r+1}\in a_{r+1}^k\Lambda} 1)\,\,=\,\,O(\sum_{\substack{\bs t\in B_R(\bs
    x)\\[.5mm] \bs t+\bs p_1\in a_1^k\Lambda\\[.5mm]\cdots\\[.5mm]\bs
    t+\bs p_{r}\in
    a_{r}^k\Lambda\\[.5mm]a_1\cdots a_r>
  x^{\beta}}}\tau\left(\|\bs t+\bs p_{r+1}\|/\lambda\right)\\
&=&O(\sum_{\substack{\bs t\in B_R(\bs
    x)\\[.5mm] \bs t+\bs p_1\in a_1^k\Lambda\\[.5mm]\cdots\\[.5mm]\bs
    t+\bs p_{r}\in
    a_{r}^k\Lambda\\[.5mm]a_1\cdots a_r>
  x^{\beta}}}R^{\epsilon})\,\,=\,\,O(R^{\epsilon}L(\bs x;R;\bs p_1,\dots,\bs p_{r};\beta))\\
&=& O\left(R^{n-\beta(nk-1)+2\epsilon}\right)+O\left(R^{n-1+\frac{2}{nk+1}+2\epsilon}\right)\,,
\end{eqnarray*}
by assumption. Setting $\beta:=\frac{r}{rnk+1}$, we obtain 
$$
L(\bs x;R;\bs
p_1,\dots,\bs p_{r+1};\alpha)=O\left(R^{n-\alpha(nk-1)+\epsilon}\right)+O\left(R^{n-1+\frac{2}{nk+1}+2\epsilon}\right)\,,
$$
which proves the lemma. 
\end{proof}

We are now in a position to improve the error term of 
Lemma~\ref{VPpatches}. 

\begin{theorem}\label{VPpatches2}
Let $\mathcal P$ be a finite subset of $\Lambda$, $m\in\mathbb N$,
$\bs m\in\Lambda$, $P$ be a natural number coprime to $m$ and $\bs
x\in\mathbb R^n$. Then
\[|L(V_P;\mathcal P,\emptyset)\cap(\bs m+m\Lambda)\cap B_R(\bs x)|\]
is 
\begin{equation*}
\frac{R^nv_n}{m^n}\prod_{p\mid P}\biggl(1-\frac{|\mathcal
  P/p^k\Lambda|}{p^{nk}}\biggr)+O\left(R^{n-1+\frac{2}{nk+1}+\epsilon}\right)\,,
\end{equation*}
where  the
$O$-constant depends only on $\Lambda$, $k$ and $\mathcal P$.
\end{theorem}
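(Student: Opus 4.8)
The plan is to carry out Mirsky's refinement exactly as motivated at the start of this section: set up the same Möbius-weighted multiple sum as in Lemma~\ref{VPpatches}, but split off and \emph{discard} the terms in which the product $d_1\cdots d_r$ is large \emph{before} estimating the inner lattice-point count. In the surviving terms every modulus is then so small that the offending $O(1)$ error of Lemma~\ref{lemT} can no longer accumulate destructively, and Lemma~\ref{freelemma} is precisely what controls the discarded tail.

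First I would reduce to $P$ squarefree and introduce $Q$ exactly as in the proof of Lemma~\ref{VPpatches}: $Q$ is the product of those primes $p\mid P$ with $|\mathcal P/p^k\Lambda|<|\mathcal P|$, which is bounded in terms of $\Lambda$, $k$ and $\mathcal P$. Summing over the cosets of $mQ^k\Lambda$ reduces matters to estimating, on a single such coset, the sum \eqref{VPcount}; since $Q$ is bounded, the final summation over these cosets changes the error by at most a bounded factor and does not affect the exponents. Writing $r=|\mathcal P|$ and $d:=d_1\cdots d_r$, and fixing a parameter $\alpha>0$, I would then split \eqref{VPcount} according to whether $d\le R^{\alpha}$ or $d>R^{\alpha}$.

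For the tail $d>R^{\alpha}$, using $|\mu|\le 1$ and dropping the coset and divisibility constraints shows that its total contribution is at most $L(\bs x;R;\bs p_1,\dots,\bs p_r;\alpha)$, which by Lemma~\ref{freelemma} is $O(R^{n-\alpha(nk-1)+\epsilon})+O(R^{n-1+\frac{2}{nk+1}+\epsilon})$. For the main part $d\le R^{\alpha}$, I would estimate each inner sum by Lemma~\ref{lemT} with modulus $m(Qd)^k$; here the system is soluble with solubility symbol equal to $1$, since the $d_i$ are pairwise coprime and coprime to $mQ$, so that $[d_1^k,\dots,d_r^k,mQ^k]=m(Qd)^k$. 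The leading terms $R^nv_n/(m^n(Qd)^{nk})$, once the restriction $d\le R^{\alpha}$ is removed, reassemble into the Euler product \eqref{main} exactly as in Lemma~\ref{VPpatches} via \eqref{Eulerprod}; the cost of removing that restriction is $O\!\bigl(R^n\sum_{d>R^{\alpha}}\tau_r(d)/d^{nk}\bigr)=O(R^{n-\alpha(nk-1)+\epsilon})$ by \eqref{taubound}. The accumulated middle errors sum to $O(R^{n-1+\epsilon})$ (with the usual logarithmic caveat only at $n=2$, $k=1$, and no such term when $n=1$), while the accumulated $O(1)$ errors contribute $\sum_{d\le R^{\alpha}}\tau_r(d)=O(R^{\alpha+\epsilon})$.

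Finally I would balance the exponents by choosing $\alpha=1/(nk+1)$, so that $n-\alpha(nk-1)=n-1+\frac{2}{nk+1}$ while $\alpha=\frac{1}{nk+1}\le n-1+\frac{2}{nk+1}$; collecting the four error contributions then yields the single term $O(R^{n-1+\frac{2}{nk+1}+\epsilon})$ claimed. The one genuinely delicate point is the tail estimate: one must recognise that, thanks to $|\mu|\le 1$, the signed sum over large $d$ is dominated by the \emph{unsigned} count $L$, and that both this discarded tail and the tail introduced when extending the main sum to an Euler product are of the same order $R^{n-\alpha(nk-1)+\epsilon}$. It is exactly this order, dictated by Lemma~\ref{freelemma}, that forces the choice of $\alpha$ and hence the final exponent $\frac{2}{nk+1}$.
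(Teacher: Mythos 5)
Your proof is correct and follows essentially the same route as the paper's own: split the M\"obius sum \eqref{VPcount} at a threshold $R^{\alpha}$, bound the discarded tail by Lemma~\ref{freelemma}, estimate the surviving terms via Lemmas~\ref{solubility} and~\ref{lemT}, and reassemble the Euler product with a tail cost of $O(R^{n-\alpha(nk-1)+\epsilon})$. The only (immaterial) difference is the cut-off: the paper takes $d_1\cdots d_r\le R^{1/(nk)}$ rather than your balanced choice $R^{1/(nk+1)}$, and both choices leave the dominant error $O\left(R^{n-1+\frac{2}{nk+1}+\epsilon}\right)$, which is forced by the second term in Lemma~\ref{freelemma}, unchanged.
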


\begin{proof}
This follows from the following modification of the proof of
Lemma~\ref{VPpatches}. We shall also use the notation from that
proof. It suffices to show
that~\eqref{VPcount} is   
\begin{equation*}
\frac{R^nv_n}{m^nQ^{nk}}\prod_{p\mid P/Q}\biggl(1-\frac{r}{p^{nk}}\biggr)+O\left(R^{n-1+\frac{2}{nk+1}+\epsilon}\right)\,.
\end{equation*}
To this end, divide~\eqref{VPcount} as $C_1+C_2$, where
\begin{equation*}
C_1= 
\begin{array}[t]{c}
\displaystyle\sum_{d_1\mid P/Q}\hspace{1mm}\sum_{d_2\mid P/Q}\cdots\sum_{d_r\mid P/Q}\\
\text{\scriptsize$d_1\cdots d_r\le R^{\frac{1}{nk}}$}
\end{array}
\mu(d_1\cdots d_r)\hspace{-4mm}
\sum_{\substack{\bs t\in\Lambda\cap B_R(\bs x)\\[.5mm]
\bs t\in\bs q+mQ^k\Lambda\\[1mm]
\bs t\in -\bs p_i+d_i^k\Lambda}}
1,
\end{equation*}
and $C_2$ consists of the terms with $d_1\cdots d_r>
R^{\frac{1}{nk}}$. By Lemma~\ref{freelemma}, 
$$
C_2=O\left(R^{n-\frac{nk-1}{nk}+\epsilon}\right)+O\left(R^{n-1+\frac{2}{nk+1}+\epsilon}\right)=O\left(R^{n-1+\frac{2}{nk+1}+\epsilon}\right)\,.
$$
One further obtains
\begin{eqnarray*}
C_1&=&\begin{array}[t]{c}
\displaystyle\sum_{d_1\mid P/Q}\hspace{1mm}\sum_{d_2\mid P/Q}\cdots\sum_{d_r\mid P/Q}\\
\text{\scriptsize$d_1\cdots d_r\le R^{\frac{1}{nk}}$}
\end{array}
\mu(d_1\cdots d_r)
\sum_{\substack{\bs t\in\Lambda\cap B_R(\bs x)\\[.5mm]
\bs t\in\bs q+mQ^k\Lambda\\[1mm]
\bs t\in -\bs p_i+d_i^k\Lambda}}
1\\
&=&\begin{array}[t]{c}
\displaystyle\sum_{d_1\mid P/Q}\hspace{1mm}\sum_{d_2\mid P/Q}\cdots\sum_{d_r\mid P/Q}\\
\text{\scriptsize$d_1\cdots d_r\le R^{\frac{1}{nk}}$}
\end{array}
\mu(d_1\cdots d_r)T\Big(\bs
x;R;\substack{d_1^k,\dots,d_{r}^k,mQ^k\\[.5mm]\bs p_1,\dots,\bs
  p_{r},\bs q\hphantom{\,\,Q^k}}\Big)
\end{eqnarray*}
Since the $d_1,\dots,d_r$ are pairwise coprime and since $(mQ,d)=1$,
Lemma~\ref{lemT} in conjunction with Lemma~\ref{solubility} shows that
\begin{eqnarray*}
T\Big(\bs
x;R;\substack{d_1^k,\dots,d_{r}^k,mQ^k\\[.5mm]\bs p_1,\dots,\bs
  p_{r},\bs q\hphantom{\,\,Q^k}}\Big)&=&\frac{R^nv_n}{m^n(dQ)^{nk}}+O\left(\frac{R^{n-1}}{m^{n-1}(dQ)^{(n-1)k}}\right)+O(1)\\&=&\frac{R^nv_n}{m^n(dQ)^{nk}}+O(R^{n-1})\,.
\end{eqnarray*}
Just as in the proof of Lemma~\ref{VPpatches}, substituting this in
the above expression for $C_1$ and removing the condition $d\le R^{\frac{1}{nk}}$ from the sum over
$R^nv_n/(m^n(dQ)^{nk})$ gives the main term
$$
\frac{R^nv_n}{m^nQ^{nk}}\prod_{p\mid P/Q}\biggl(1-\frac{r}{p^{nk}}\biggr)
$$
The error from the extra terms included in
the extended multiple sum is
$$
O\biggl(R^n\sum_{d>R^{\frac{1}{nk}}}\frac{\tau_r(d)}{d^{nk}}\biggr)=O(R^{n-1+\frac{1}{nk}+\epsilon})=O\left(R^{n-1+\frac{2}{nk+1}+\epsilon}\right)\,.
$$
Similarly, the sum over the error term can be seen to be
$O(R^{n-1+\frac{1}{nk}+\epsilon})$. Altogether, this proves the assertion.
\end{proof}

One can now employ Theorem~\ref{VPpatches2} instead of
Lemma~\ref{VPpatches} to see that the
error terms in Corollary~\ref{cor} and Theorem~\ref{density} of the
form $O(R^n/(\log R)^{nk-1})$ can indeed be improved to 
$$O(R^{n-1+\frac{2}{nk+1}+\epsilon})\,.$$ More precisely,
Theorem~\ref{VPpatches2} allows one to choose $P$ as large as the
product of primes less than $R^{1/(nk+1)}$ (instead of $\log R$) in the modified proofs.

\appendix
\renewcommand{\theequation}{A\arabic{equation}}
\setcounter{equation}{0}

\section{Facts from number theory}
We have used a number of standard facts from number theory in this paper,
which we collect here with proper references for convenience.

The \emph{inclusion-exclusion principle} says that if we have
a set of $N$ elements and a list of properties $P_1,P_2,\ldots$,
with $N_i$ elements having property $P_i$, $N_{ij}$ having both
properties $P_i$ and $P_j$, and so on, then the number of elements
having none of the properties is
\begin{equation}\label{in-ex}
N-N_1-N_2-\cdots+N_{12}+\cdots-N_{123}-\cdots\quad\mbox{\cite[Thm.~260]{HW}.}
\end{equation}
The \emph{M\"obius function} is defined for $m\in\mathbb N$ by
\[\mu(m):=
\begin{cases}
1 &\text{when $m=1$},\\
(-1)^r&\text{when $m$ is the product of $r$ distinct primes},\\
0 &\text{otherwise},
\end{cases}\]
and has the property that, for any $m\in\mathbb N$,
\begin{equation}\label{Mobius}
\sum_{d\mid m}\mu(d)=
\begin{cases}
1&\text{if $m=1$},\\0&\text{otherwise},
\end{cases}
\quad\mbox{\cite[Thm.~263]{HW}, \cite[Thm.~6.3.1]{H}}
\end{equation}
(the basis of the M\"obius inversion formula), derived by applying
the inclusion-exclusion principle to the singleton set $\{m\}$ with
the property $P_i$ being divisibility by the $i$th prime. 

The \emph{Riemann $\zeta$-function} is defined for $\Re(s)>1$ by
\begin{equation}\label{zetafn}
\zeta(s):=\sum_{m=1}^\infty\frac{1}{m^s}=
\prod_{\mbox{\scriptsize$p$ prime}}\left(1-\frac{1}{p^s}\right)^{-1}
\end{equation}
and, as a result of M\"obius inversion,
\[\sum_{m=1}^\infty\frac{\mu(m)}{m^s}=\frac{1}{\zeta(s)}
\quad\mbox{\cite[Thm.~287]{HW}, \cite[\S6.14]{H}.}\]
We needed to approximate partial Euler products, slightly more general than
that on the right of \eqref{zetafn}. Let $r\ge0$ and $s>1$ be fixed. Then
\begin{eqnarray*}
0\ge\log\prod_{p\ge N}\left(1-\frac{r}{p^s}\right)
&=&\sum_{p\ge N}\log\left(1-\frac{r}{p^s}\right)\\
&\ge&\sum_{p\ge N}\frac{\frac{r}{p^s}}{\frac{r}{p^s}-1}\quad\mbox{if $N^s>r$}\\
&\ge&\frac{r}{\frac{r}{N^s}-1}\sum_{p\ge N}\frac{1}{p^s}\\
&\ge&\frac{r}{\frac{r}{N^s}-1}\frac{1}{(N-1)^{s-1}}.
\end{eqnarray*}
Hence, on exponentiating,
\begin{equation}\label{Eulertail}
\prod_{p\ge N}\left(1-\frac{r}{p^s}\right)=1-O(N^{1-s}).
\end{equation}
We also had to estimate the product of primes up to a given bound:
\begin{equation}\label{primeprod}
\prod_{p\le N}p<4^N\quad\mbox{\cite[Thm.~415]{HW}.}
\end{equation}
The \emph{$r$-divisor function} $\tau_r(m)$, for $r\ge2$, is the number
of ways of expressing $m$ as an ordered product of $r$ natural numbers.
The special case $r=2$ is the ordinary divisor function
\[\tau(m):=\tau_2(m)=\sum_{d\mid m}1.\]
which satisfies $\tau(m)=O(m^\epsilon)$ for every $\epsilon>0$
\cite[Thm.~315]{HW}, \cite[Thm.~6.5.2]{H}, from which we deduce that also
\begin{equation}\label{taubound}
\tau_r(m)\le\tau(m)^r=O(m^\epsilon)
\end{equation}
Another divisor sum estimate we have used is
\begin{equation}\label{sum1/d}
\sum_{d\mid m}\frac{1}{d}=O(\log\log m)\quad\mbox{\cite[Thm.~323]{HW}.}
\end{equation}

An arithmetic function $f(m)$ (defined on natural numbers $m$) is called
\emph{multiplicative} if $f(m_1m_2)=f(m_1)f(m_2)$ whenever
$(m_1,m_2)=1$. For example, the functions $\mu(m)$ and
$\tau_r(m)$ are clearly multiplicative. A \emph{Dirichlet series},
$\sum_{m=1}^\infty f(m)/m^s$, whose coefficients $f(m)$ are multiplicative
can be expressed as an \emph{Euler product} over the primes $p$,
\begin{equation}\label{Eulerprod}
\sum_{m=1}^\infty\frac{f(m)}{m^s}=\prod_p\left(1+\frac{f(p)}{p^s}+\frac{f(p^2)}{p^{2s}}+\cdots\right),
\end{equation}
for all values of $s$ for which the sum is absolutely convergent.

An estimate we have used frequently is
\begin{equation}\label{LambdaCount}
|B_\rho({\boldsymbol x})\cap\Lambda|=\frac{\rho^nv_n}{\det\Lambda}+
O\left(\left(\frac{\rho^n}{\det\Lambda}\right)^{1-1/n}\right)+O(1),
\end{equation}
approximating the number of points of an $n$-dimensional lattice
$\Lambda$ in a large ball $B_\rho({\bs x})$ (the last error term being
required only when $\det\Lambda$ is bigger than $\rho^n$). This is
obtained by dividing $B_\rho({\bs x})$ into fundamental regions for
$\Lambda$, each of volume $\det\Lambda$ and containing one point of
$\Lambda$, with the error terms arising from fundamental regions that
overlap the boundary of $B_\rho({\bs x})$. The \mbox{$O$-constants}
depend on the shape of $\Lambda$, but not on its size, and are independent
of $\bs x$. A more precise version is given as Proposition~1 of \cite{BMP}.

We have also made much use of the Chinese Remainder Theorem in the form that
if ${\bs m}_1,{\bs m}_2,\ldots,{\bs m}_r$ are points of a lattice $\Lambda$
and $m_1,m_2,\ldots,m_r$ is a set of natural numbers that are
pairwise coprime, then there is a point $\bs t\in\Lambda$ such that for all
points $\bs x\in\Lambda$
\begin{equation}\label{CRT}
{\bs x}\equiv{\bs m}_i\mbox{ (mod~$m_i\Lambda$)  for $i=1,\ldots, r$}
  \Longleftrightarrow {\bs x}\equiv{\bs t}  \mbox{ (mod~$m_1\cdots m_r\Lambda$).}
\end{equation}
This is given as Proposition~2 of \cite{BMP} and is proved by applying
Theorem~2.7.2 of \cite{H} (or Theorem~121 of \cite{HW}) to each coordinate
relative to a basis of $\Lambda$.

We needed a simple upper bound for binomial coefficients.
By comparison with $\int_1^s\log x\,dx$, we have, for $s\in\mathbb N$,
\[\log s!\ge s\log s-s+1\]
(a weak, one-sided version of Stirling's formula), so $s!>(s/e)^s$ and hence
\begin{equation}\label{binomial}
\binom{m}{s}=\frac{m(m-1)\cdots(m-s+1)}{s!}\le\frac{m^s}{s!}<\left(\frac{em}{s}\right)^s.
\end{equation}

\section*{Acknowledgements}
The first author gives special thanks to Daniel Lenz, who brought the question of the entropy
of the visible points to his attention and with whom he had helpful
discussions since, and to Igor Shparlinski for some useful hints on
technique.

The second author is grateful to Michael Baake for the opportunity to finish the
manuscript and for a number of helpful discussions, and to J\"org
Br\"udern for clarifying discussions and for pointing out that the
work of Tsang on squarefree numbers extends to the case of $k$-free
numbers. It is his pleasure to thank Christoph
Richard for several useful hints and for a guide to the literature
of the variational principle in particular. The hospitality of
the Erwin Schr\"odinger Institute in Vienna is gratefully
acknowledged. This work was
supported by the German Research Council (DFG), within the CRC 701.


\begin{thebibliography}{99}

\bibitem{B} 
M.~Baake, Diffraction of weighted lattice subsets, \textit{Canad.\
  Math.\ Bull.}\ \textbf{45} (4), (2002), 483--498; math.MG/0106111.

\bibitem{BG0}
M.~Baake and U.~Grimm,
Kinematic diffraction from a mathematical viewpoint,
\textit{Z.\ Kristallogr.}\ \textbf{226} (2011), 711--725; math-ph/1105.0095.

\bibitem{BG}
M.~Baake and U.~Grimm,
\textit{Theory of Aperiodic Order: A Mathematical Invitation},
Cambridge University Press, Cambridge, 2013, in preparation.  

\bibitem{BLR}
M.~Baake, D.~Lenz and C.~Richard,
Pure point diffraction implies zero entropy for
Delone sets with uniform cluster frequencies, \textit{Lett.\ Math.\
  Phys.}\ \textbf{82} (2007), 61--77; math.DS/0706.1677.

\bibitem{BMP}
M.~Baake, R.~V.~Moody and P.~A.~B.~Pleasants,
Diffraction from visible lattice points and $k$th power free
integers, \textit{Discrete Math.}\ \textbf{221} (2000), 3--42;
math.MG/9906132.

\bibitem{BS} R.~M.~Burton and J.~E.~Steif,
Some $2$-d symbolic dynamical systems: entropy and mixing,
 in: \textit{Ergodic theory of $\mathbb Z^d$-actions} (M.~Pollicott
 and K.~Schmidt eds.),  London Math. Soc. Lecture Note Ser., 228, Cambridge University Press, Cambridge, 1996, pp.~297--305.

\bibitem{CS} F.~Cellarosi and Ya.~G.~Sinai,
Ergodic properties of square-free numbers, \textit{J.\
  Eur.\ Math.\ Soc.}\ \textbf{15} (2013), 1343--1374; math.DS/1112.4691.

\bibitem{CV} F.~Cellarosi and I.~Vinogradov,
Ergodic properties of $m$-free integers in number fields, submitted; math.DS/1304.0214.

\bibitem{DGS}
M.~Denker, C.~Grillenberger and K.~Sigmund, 
\textit{Ergodic Theory on Compact Spaces}, Lecture Notes in Mathematics, vol.~527, Springer, Berlin,
1976.

\bibitem{ELW} M.~Einsiedler, E.~Lindenstrauss and T.~Ward,
  \textit{Entropy in Dynamics}, in preparation.

\bibitem{HW}
G.~H.~Hardy and E.~M.~Wright,
\textit{An Introduction to the Theory of Numbers}, 5th ed.,
Clarendon Press, Oxford, 1979.

\bibitem{H}
L.~K.~Hua,
\textit{Introduction to Number Theory}, Springer, Berlin, 1982.

\bibitem{Keller} G.~Keller,
\textit{Equilibrium States in Ergodic Theory}, London Mathematical Society Student Texts, 42, Cambridge University Press, Cambridge, 1998. 

\bibitem{LP}
J.~C.~Lagarias and P.~A.~B.~Pleasants,
Repetitive Delone sets and quasicrystals,
\textit{Ergodic Theory Dynam.\ Systems} \textbf{23} (2003), 831--867;
math.DS/9909033.

\bibitem{L}
D.~Lenz, private communication.

\bibitem{Mir1}
L.~Mirsky,
Note on an asymptotic formula connected with $r$-free integers,
\textit{Quart.\ J.\ of Math.}\ \textbf{18} (1947), 178--182.

\bibitem{Mir2}
L.~Mirsky,
Arithmetical pattern problems relating to divisibility by $r$th powers,
\textit{Proc.\ London Math.\ Soc.}\ (2) \textbf{50} (1949), 497--508.

\bibitem{Mis} M.~Misiurewicz,
A short proof of the variational principle for a $\mathbb
Z_+^N$-action on a compact space, 
\textit{Asterisque}\ \textbf{40} (1975), 147--157.

\bibitem{Moo1}
R.~V.~Moody,
Model sets:\ a survey, in: \textit{From Quasicrystals to More Complex
Systems} (F.~Axel, F.~D\'enoyer and J.~P.~Gazeau eds.) EDP Sciences, Les
Ulis, and Springer, Berlin, 2000, pp.~145--166; math.MG/0002020.

\bibitem{Mor}
P.~Moree,
Approximation of singular series and automata,
\textit{Manuscripta Math.}\ \textbf{101} (2000), 385--399.

\bibitem{P}
S.~S.~Pillai,
On sets of square-free numbers,
\textit{J.\ Indian Math.\ Soc.}\ \textbf{2} (1936), 116--118.

\bibitem{Ruelle} 
D.~Ruelle,
\textit{Thermodynamic Formalism}, 2nd ed., Cambridge University Press,
Cambridge, 2004.

\bibitem{Sarnak}
P.~Sarnak, Three lectures on the M\"obius function randomness and
dynamics (Lecture 1), 2010; \url{http://publications.ias.edu/sites/default/files/MobiusFunctionsLectures(2).pdf}.

\bibitem{Sing} B.~Sing, \textit{Pisot Substitutions and Beyond}, PhD
  thesis (Universit\"at Bielefeld), 2006, available on BieSOn: \url{http://bieson.ub.uni-bielefeld.de/volltexte/2007/1155/}.

\bibitem{Tsang}
K.-M.~Tsang, The distribution of $r$-tuples of square-free numbers, 
\textit{Mathematika}\ \textbf{32} (1985), 265--275.

\bibitem{Walters}
P.~Walters, \textit{An Introduction to Ergodic Theory}, reprint, Springer, New
York, 2000.
\end{thebibliography}
\end{document}